\DeclareMathAlphabet{\mathpzc}{OT1}{pzc}{m}{it}
\newtheorem{thm}[equation]{Theorem}
\newtheorem*{theorem*}{Theorem}
\newtheorem*{cor*}{Corollary}
\newtheorem*{rmk*}{Remark}
\newtheorem*{prop*}{Proposition}
\newtheorem{rmk}[equation]{Remark}\newtheorem{remark}[equation]{Remark}
\newtheorem{prop}[equation]{Proposition}
\newtheorem{cor}[equation]{Corollary}
\newtheorem{lem}[equation]{Lemma}
\newtheorem{dfn}[equation]{Definition}
\numberwithin{equation}{section}
\numberwithin{equation}{section}
\newcommand{\e}{\epsilon}
\newcommand{\bms}{\mathsf{m}}
\newcommand{{\grinv}}{{\Cal G}^{-r}}
\newcommand{\ba}{\backslash}\newcommand{\bs}{\backslash}
\newcommand{\al}{\alpha}
\newcommand{\be}{\beta}
\newcommand{\Cal}{\mathcal}
\newcommand{\bp}{\begin{pmatrix}}
\newcommand{\ep}{\end{pmatrix}}
\renewcommand{\bp}{{\rm bp}}
\newcommand{\la}{\lambda}
\newcommand{\norm}[1]{\lVert #1 \rVert}
\newcommand{\op}{\operatorname}
\newcommand{\BMS}{\operatorname{BMS}}
\newcommand{\cl}[1]{\overline{#1}}
\renewcommand{\setminus}{-}
\newcommand{\Ad}{\operatorname{Ad}}
\newcommand{\Om}{\Omega}
\newcommand\Hom{{\rm Hom}}
\newcommand{\ga}{\gamma}
\newcommand{\F}{\mathcal F}
\newcommand{\La}{\Lambda}
\renewcommand{\i}{\op{i}}
\def\Im{\operatorname{Im}}
\def\Hom{\operatorname{Hom}}
\def\e{\mathrm{e}}
\def\i{\mathrm{i}}
\newcommand{\Ga}{\Gamma}\newcommand{\bb}{\mathbb}
\newcommand{\cal}{\mathcal}
\renewcommand{\e}{\varepsilon}
\renewcommand{\epsilon}{\e}
\newcounter{elimination@steps}
\newcolumntype{R}[1]{>{\raggedleft\arraybackslash$}p{#1}<{$}}
\def\elimination@num@rights{}
\def\elimination@num@variables{}
\def\elimination@col@width{}
\newcommand{\eliminationstep}[2]
{
    \ifnum\value{elimination@steps}>0\longrightarrow\quad\fi
    \left[
        \ifnum\elimination@num@rights>0
            \begin{array}
            {@{}*{\elimination@num@variables}{R{\elimination@col@width}}
            |@{}*{\elimination@num@rights}{R{\elimination@col@width}}}
        \else
            \begin{array}
            {@{}*{\elimination@num@variables}{R{\elimination@col@width}}}
        \fi
            #1
        \end{array}
    \right]
    & 
    \begin{array}{l}
        #2
    \end{array}
    \addtocounter{elimination@steps}{1}
}
\begin{document}

\author{Mikołaj Frączyk}
\address{Department of Mathematics, University of Chicago, 5734 S University Ave, Chicago, IL 60637}
\curraddr{Faculty of Mathematics and Computer Science, Jagiellonian University, ul. Łojasiewicza 6, 30-348 Krak{\'o}w, Poland}
\email{mikolaj.fraczyk@uj.edu.pl}

\author{Minju Lee}
\address{Department of Mathematics, University of Chicago, 5734 S University Ave, Chicago, IL 60637}
\email{minju1@uchicago.edu}

\title[]{Discrete subgroups with finite Bowen-Margulis-Sullivan measure\\
in higher rank
}
\begin{abstract}
    Let $G$ be a connected semisimple real algebraic group and $\Ga<G$ be a Zariski dense discrete subgroup.
    We prove that if $\Ga\ba G$ admits any finite Bowen-Margulis-Sullivan measure, then $\Ga$ is virtually a product of higher rank lattices and discrete subgroups of rank one factors of $G$.
    This may be viewed as a measure-theoretic analogue of classification of convex cocompact actions by Kleiner-Leeb \cite{KL} and Quint \cite{Quint2}, which was conjectured  by Corlette in 1994.
    The key ingredients in our proof are the product structure of leafwise measures and the high entropy method of Einsiedler-Katok-Lindenstrauss \cite{EKL}.
    In a companion paper jointly with Edwards and Oh \cite{EFLO}, we use this result to show that the bottom of the $L^2$ spectrum has no atom in any infinite volume quotient of a higher rank simple algebraic group.
\end{abstract}
\thanks{
M.F. was partially supported by the Dioscuri programme initiated by the Max Planck Society, jointly managed with the National Science Centre in Poland, and mutually funded by Polish the Ministry of Education and Science and the German Federal Ministry of Education and Research.
}
\maketitle
\section{Introduction}

Let $G$ be a higher rank semisimple Lie group. The lattices of $G$ are more or less classified thanks to the Margulis arithmeticity theorem. In contrast, the structure of infinite covolume subgroups remains mysterious, with only a few completely general results available (e.g., see \cite{FG}). Special classes of subgroups, most notably Anosov subgroups, have been investigated in more depth (e.g., see \cite{BCLS, BPS, FockG, Hi, GG, GW, KLP, La} and \cite{Ka, Win} for survey articles). Despite these efforts, our understanding of general {\textquotedblleft}large{\textquotedblright} discrete subgroups of $G$ remains limited. Not many constructions are known, and we still lack the theory that would explain the scarcity of examples.

Quint (\cite{Quint1, Quint3}) has shown that Patterson-Sullivan theory (\cite{Pat, Sul}) can be applied to any discrete subgroup $\Gamma < G$, providing a way to study discrete subgroups of $G$ via their conformal measures and the associated Bowen-Margulis-Sullivan (BMS) measures on the quotient space $\Gamma\bs G$. These measures are invariant under the action of a maximal real split torus of $G$, which, in our setting, has rank at least two. Einsiedler-Katok \cite{EK} and Einsiedler-Katok-Lindenstrauss \cite{EKL} have shown that, under mild positive entropy and certain recurrence assumptions, the actions of higher rank tori on homogeneous spaces exhibit many rigidity phenomena that are absent in the rank one case. In this paper, we show how one can leverage this rigidity to gain a better understanding of discrete subgroups of $G$.

In our main result (Theorem \ref{thm.BMS}) we classify subgroups $\Gamma$ that admit a finite BMS measure, which provides a measured analogue of the theorem of Kleiner-Leeb and Quint classifying higher rank convex cocompact actions. We believe that our approach using entropy rigidity of higher rank actions will yield further results in this area.

\subsection*{Higher rank convex cocompact actions}
Let $X$ be a Riemannian symmetric space of non-compact type.
It is known that the group $G$ consisting of all orientation preserving isometries of $X$ is a connected semisimple real algebraic group\footnote{by this we mean, the connected component of the set of real points of a semisimple algebraic group defined over $\bb R$.}.
Let $\Ga<G$ be a Zariski dense discrete subgroup.
We say $\Ga$ is \textit{convex cocompact} if there exists a non-empty $\Ga$-invariant closed convex subset $C\subset X$ on which $\Ga$ acts cocompactly.

A conjecture of Corlette from 1994, now resolved by the work of Kleiner-Leeb \cite{KL}, says that the only convex cocompact actions in higher rank symmetric spaces are those arising from the rank one constructions. 
More precisely, we have the following:
\begin{thm}[Kleiner-Leeb]\label{thm.KL}
    If $C\subset X$ is a non-empty $\Ga$-invariant closed convex subset on which $\Ga$ acts cocompactly, then $C=C_1\times X_2$ for some non-empty closed subset $C_1\subset X_1$ where $X_1$ $($resp. $X_2)$ denotes the product of rank one $($resp. rank $\geq 2)$ factors of $X$, so that $X=X_1\times X_2$.
\end{thm}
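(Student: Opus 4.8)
The plan is to study the asymptotic geometry of $C$ through its Tits boundary and to extract a product splitting from the rigidity of higher rank symmetric spaces. First I would invoke the de Rham decomposition to write $X = X_1 \times X_2$, with $X_1$ the product of the rank one factors and $X_2$ the product of the irreducible factors of rank $\geq 2$, and I would denote by $\pi_1, \pi_2$ the two projections. The target then reduces to showing that $C$ is \emph{saturated} in the $X_2$-direction, i.e. $C = \pi_1(C) \times X_2$; one then sets $C_1 := \pi_1(C)$, which is automatically non-empty, closed, convex, and contained in $X_1$.

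The first substantive step is to manufacture complete maximal flats inside $C$. Since $\Gamma$ is Zariski dense, its limit cone (in the sense of Benoist) is the full positive Weyl chamber, so $\Gamma$ contains loxodromic elements $\gamma$ whose translation vector is regular. The axis of such a $\gamma$ is a regular geodesic of $X$, and by cocompactness it lies within bounded distance of the orbit $\Gamma \cdot o$, and hence of $C$; nearest-point projection onto the CAT(0) convex set $C$ then places the two regular, Tits-antipodal endpoints of this axis in $\partial_{\mathrm{Tits}} C$, so that $C$ contains a geodesic line $\sigma$ joining them. A closed convex subset containing such a line contains its entire parallel set, which for a regular geodesic is precisely the unique maximal flat $F$ through it; hence $F \subset C$. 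Running this over all regular conjugacy classes, Zariski density supplies such flats in a set of directions meeting the interior of every Weyl chamber.

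The heart of the matter, and the step I expect to be the main obstacle, is the rigidity assertion in the higher rank factors: a $\Gamma$-invariant, cocompact, closed convex set that contains maximal flats must be saturated in every irreducible factor of rank $\geq 2$. This phenomenon is genuinely higher rank. Passing to infinity, $\partial_{\mathrm{Tits}} C$ is a closed convex subset of the spherical building $\partial_{\mathrm{Tits}} X$ that contains a full apartment of the thick irreducible building attached to each factor of $X_2$, together with chambers in a dense set of directions coming from Zariski density. Exploiting the thickness and irreducibility of that building, precisely the combinatorial input unavailable in rank one, where the boundary of a factor is $0$-dimensional, one shows that $\partial_{\mathrm{Tits}} C$ must exhaust the whole factor building; translating back with the help of cocompactness, the convex hull of the flats already in $C$ fills out all of $X_2$. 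This is exactly where the Kleiner--Leeb quasiflat and rigidity machinery for Euclidean and spherical buildings is indispensable, and where the rank one factors are permitted to behave differently, so that $C_1 \subset X_1$ may remain a proper convex subset.

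Finally, once $C$ is saturated in $X_2$, it is invariant under the isometries of $X_2$ coming from the factorization $G = G_1 \times G_2$, and a splitting theorem for convex subsets invariant in a de Rham factor yields the isometric product decomposition $C = \pi_1(C) \times X_2$. Setting $C_1 = \pi_1(C)$ completes the proof. Beyond standard symmetric space and CAT(0) geometry, the only external inputs are Benoist's limit cone theorem in the flat-production step and the building-theoretic rigidity of the third step, the latter carrying all of the genuine difficulty.
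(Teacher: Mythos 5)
First, a point of orientation: the paper does not prove Theorem \ref{thm.KL} at all. It is stated as a known theorem of Kleiner--Leeb, with a pointer to \cite[Thm.~1.1]{KL}, and the paper records how it can be recovered indirectly: Theorem \ref{thm.Q} implies it (\cite[Thm.~5.1]{Quint2}), and Theorem \ref{thm.Q} is in turn a consequence of the paper's main Theorem \ref{thm.BMS}, since compactness of $\Om_\Ga$ yields a finite $\BMS$ measure. So there is no in-paper argument to compare yours against; I can only judge your outline on its own terms, as a sketch of the Kleiner--Leeb proof.

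Judged that way, it has two genuine gaps. The first is in the flat-production step: you assert that a closed convex subset containing a regular geodesic line contains the entire parallel set of that line, i.e.\ the unique maximal flat through it. That is false as stated --- the geodesic line itself is a closed convex set containing the line but not the (higher-dimensional) maximal flat. What is true is that the parallel set of the line \emph{taken inside} $C$ splits off an $\mathbb{R}$-factor of $C$; nothing forces it to equal the ambient parallel set. Producing maximal flats, or enough of the Tits boundary, inside $C$ is already a substantial part of \cite{KL} and cannot be obtained by this one-line argument. The second and larger gap is that the step you yourself call the heart of the matter --- that a $\Ga$-cocompact closed convex set whose Tits boundary contains an apartment of the building of an irreducible rank $\geq 2$ factor must be saturated in that factor --- is not argued but delegated wholesale to ``the Kleiner--Leeb quasiflat and rigidity machinery''. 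Since that machinery \emph{is} the proof of the theorem being proved, the proposal is circular at exactly the point where all the difficulty lives; as it stands it is a roadmap to reading \cite{KL}, not a proof. If you want a route that is self-contained relative to this paper, the cleaner option is the one the authors indicate: cocompactness makes $\Om_\Ga$ compact, hence carries a finite $\BMS$ measure, and Theorem \ref{thm.BMS} (or Theorem \ref{thm.Q}) then produces the splitting into a higher rank lattice part and rank one convex cocompact parts.
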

\begin{rmk}
\normalfont
The original article deals with a more general setup from which Theorem \ref{thm.KL} follows as a special case. The more general version takes into account additional Euclidean factor for $X$, and also provides more information about the structure of $C_1$.
We refer the readers to \cite[Thm. 1.1]{KL} for details.
\end{rmk}
In particular, if there exists a convex cocompact subgroup of $G$ which is not a cocompact lattice, $G$ necessarily has a rank one factor.

On the other hand, independently in \cite{Quint2}, Quint established an obstruction to a higher rank convex cocompact action using a different definition.
We give a precise statement of his result since it is quite close to statement we are going to prove.
Let $G$ and $\Ga$ be as before, $A$ be a maximal real split torus of $G$, $Z$ be the centralizer of $A$, and $M$ be the maximal compact subgroup of $Z$ so that $Z=AM$.

Let $\Om_\Ga$ denote the smallest closed subset of $\Ga\ba G/M$ containing all periodic orbits of one-parameter subgroups of $A$.
When $G$ is of rank one, the $A$ action corresponds to the geodesic flow and $\Om_\Ga$ is precisely the non-wandering set of the geodesic flow.
We have:

\begin{thm}[Quint]\label{thm.Q}
    If $\Om_{\Ga}$ is compact, then there exists connected subgroups $G_0,G_1,\cdots,G_q$ with corresponding Zariski dense discrete subgroups $\Ga_0,\Ga_1,\cdots,\Ga_q$ such that the following holds:
\begin{enumerate}
    \item $G=G_0G_1\cdots G_q$ and $\Ga$ is commensurable
    \footnote{For $\Ga$, $\Ga'<G$, we say $\Ga$ is commensurable with $\Ga'$ if $\Ga\cap\Ga'$ is a subgroup of finite index in both $\Ga$ and $\Ga'$.}
    with $\Ga_0\Ga_1\cdots\Ga_q$.
    \item $\Ga_0$ is a cocompact lattice in $G_0$.
    \item For all $1\leq i\leq q$, $G_i$ is a simple Lie group of rank one, and $\Ga_i$ is a convex cocompact subgroup of $G_i$.
\end{enumerate}
\end{thm}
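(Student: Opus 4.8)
\emph{Proof proposal.} My plan is to convert Quint's dynamical hypothesis---compactness of $\Om_\Ga$---into the geometric convex cocompactness of Theorem~\ref{thm.KL}, apply Kleiner--Leeb to split the symmetric space, and then descend the resulting product structure to $\Ga$. Write $X=G/K$, and recall that a point of $G/M$ records a point of $X$ together with a maximal flat through it carrying a Weyl chamber, the $A$-action translating along the flat. A periodic orbit of a one-parameter subgroup of $A$ thus corresponds to a loxodromic (that is, $\R$-regular) element $\ga\in\Ga$ whose axis is a maximal flat $F_\ga$ translated by $\ga$ with translation vector $\la(\ga)$, and whose attracting and repelling chambers lie in the limit set $\La_\Ga\subset G/P$.

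First I would build the convex core. By Benoist's theorem the Jordan projections $\la(\ga)$ are dense in the limit cone $\mathcal L_\Ga$, which is convex with nonempty interior, and the fixed chambers of loxodromic elements are dense in $\La_\Ga$; hence the closed convex hull $C:=\mathrm{Hull}(\La_\Ga)\subset X$ equals, up to a bounded neighborhood, the union $\overline{\bigcup_\ga F_\ga}$ of periodic flats. Compactness of $\Om_\Ga$ says exactly that the frames defining these periodic flats remain in a compact part of $\Ga\ba G/M$; projecting along the proper map $G/M\to X$ and using the density above, I would conclude that $\Ga$ acts cocompactly on $C$. This bridge is the step I expect to be the main obstacle: in rank $\ge 2$ one must control the excursions of a periodic flat in directions transverse to $\mathcal L_\Ga$, so that the non-wandering set of the higher rank $A$-action captures the entire convex core rather than a lower-dimensional skeleton, which calls for a closing/shadowing argument combined with regularity of $\La_\Ga$.

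Granting the bridge, $C$ is a nonempty $\Ga$-invariant closed convex set on which $\Ga$ acts cocompactly, so Theorem~\ref{thm.KL} yields $C=C_1\times X_2$ with $X=X_1\times X_2$, where $X_1$ (resp.\ $X_2$) is the product of the rank one (resp.\ rank $\ge 2$) factors. To descend to $\Ga$, I would pass to the finite-index subgroup lying in $\op{Isom}(X)^{\circ}=\op{Isom}(X_1)^{\circ}\times\op{Isom}(X_2)^{\circ}$ and invoke a splitting theorem for proper cocompact actions on a product of Hadamard spaces---applicable because $X_2$ carries no Euclidean factor and is acted upon in full---to obtain a further finite-index subgroup splitting as a direct product $\Ga_0'\times\Ga_1'\times\cdots\times\Ga_q'$ respecting the factors of $X$. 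The factor acting on $X_2$ is then a cocompact lattice in $G_0:=\op{Isom}(X_2)^{\circ}$, while each other factor acts convex cocompactly on a rank one factor of $X_1$; absorbing into $G_0$ those rank one factors on which $\Ga$ acts as a uniform lattice and letting $G_1,\dots,G_q$ be the remaining rank one factors gives the asserted groups, each piece being Zariski dense in its $G_i$, and the commensurability $\Ga\sim\Ga_0\Ga_1\cdots\Ga_q$.

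Finally, I would emphasize that discreteness cannot be checked factor by factor: the projection of $\Ga$ to a single simple factor may fail to be discrete, as happens for irreducible lattices. It is precisely the splitting theorem, fed by the fullness of the $X_2$-action that comes from cocompactness on $C$, that manufactures genuine discrete direct factors, and hence simultaneously the cocompact lattice $\Ga_0$ and the rank one convex cocompact groups $\Ga_i$.
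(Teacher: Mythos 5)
Your strategy runs in the opposite logical direction from how this result sits in the literature and in this paper: Theorem \ref{thm.Q} is Quint's theorem, quoted from \cite{Quint2}, and the paper's own justification for it is the observation (after Theorem \ref{thm.BMS}) that there always exists a $\BMS$ measure supported on $\Om_\Ga$, so compactness of $\Om_\Ga$ produces a finite $\BMS$ measure and the main theorem applies; conversely, the paper records that Theorem \ref{thm.Q} can be used to \emph{deduce} Theorem \ref{thm.KL}, not the other way around. Deriving Quint's theorem from Kleiner--Leeb is not automatically circular, but it leaves the hardest content unproved. Your ``bridge'' --- that compactness of $\Om_\Ga$ forces $\Ga$ to act cocompactly on a closed convex subset of $X$ --- is essentially the whole theorem, and the specific claim you base it on is not correct as stated: $\La_\Ga$ lives in the Furstenberg boundary $\F$, not the visual boundary, so $\op{Hull}(\La_\Ga)$ must first be defined with care; and the union of axes of loxodromic elements only explores asymptotic directions lying in the limit cone $\cal L_\Ga$, which for a general Zariski dense $\Ga$ is a proper subcone of $\fa^+$, whereas any reasonable convex hull of the limit set contains entire Weyl chambers. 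The two sets are therefore not within bounded distance of one another in general, and no closing or shadowing lemma will repair this; what must actually be proved (and is, in \cite{Quint2}) is that compactness of $\Om_\Ga$ forces $\La_{\Ga_i}=\F_i$ on every higher rank factor --- see the remark following Lemma \ref{lem.prod}.

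The second gap is the splitting step. A splitting theorem for proper cocompact actions on products of Hadamard spaces of the generality you invoke is false: an irreducible uniform lattice in $\SL_2(\R)\times\SL_2(\R)$ acts properly cocompactly on $\bH^2\times\bH^2$, preserves the factors, and is not virtually a direct product. The virtual splitting asserted in Theorem \ref{thm.Q} has to be extracted from the dichotomy that each projection $\pi_i(\Ga)$ is either discrete or dense (Lemma \ref{lem.DD}) together with an argument disposing of the dense case --- in this paper that role is played by the conformal-measure analysis of Proposition \ref{lem.d} and the discreteness bookkeeping of Lemma \ref{lem.disc}, which show that a factor with dense projection must be absorbed into the lattice part $G_0$. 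Without a substitute for that step, the commensurability of $\Ga$ with $\Ga_0\Ga_1\cdots\Ga_q$ does not follow from the conclusion $C=C_1\times X_2$ of Theorem \ref{thm.KL}.
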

It is known that 
Theorem \ref{thm.Q} can be used to deduce Theorem \ref{thm.KL} \cite[Thm. 5.1]{Quint2}.

In this paper, we obtain a measure-theoretic analogue of Theorem \ref{thm.Q}.
We are concerned with the question {\textquotedblleft}which discrete subgroup $\Ga$ of a semisimple real algebraic group $G$ can admit a finite 
\textit{geometric measure} on $\Ga\ba G${\,\textquotedblright}.

\subsection*{Bowen-Margulis-Sullivan measures}
Bowen-Margulis-Sullivan measure was first introduced by Margulis \cite{Mar} in his 1970 thesis, for the study of Anosov flows on compact manifolds.
Since then it became an important object in the field of ergodic theory and dynamics, and it had been studied extensively in various different contexts.
For a general semisimple Lie group, it can be defined as follows:

Let $G$, $A$, $M$ and $\Ga$ be as before.
Choose a closed Weyl chamber $A^+\subset A$, and denote by $\op{int}A^+$ its interior.
The maximal horospherical subgroups associated to the choice of $A^+$ are given by
$$
N^\pm=\{g\in G : a^k g a^{-k}\to e\text{ as }k\to\pm\infty\text{ for some $a\in \op{int}A^+$}\}.
$$
Let $P^\pm=MAN^\pm$ be the corresponding minimal parabolic subgroups.
For all $x\in\Ga\ba G$, let 
$$
W^{s}(x)=xN^-,\quad W^{cu}(x)=xP^+
$$
be the (global) stable, and central-unstable manifold passing through $x$ corresponding to an element $a\in\op{int}A^+$.
For $\e>0$, let 
$$
W_\e^{s}(x)=xN_\e^-,\quad W_\e^{cu}(x)=xP_\e^+
$$
where $N_\e^-$ and $P_\e^+$ denotes the $\e$-neighborhood of the identity in $N^-$ and $P^+$ respectively.
If $\e$ is sufficiently small, then for all $y\in W_\e^{s}(x)$ and $z\in W_\e^{cu}(x)$, $W_{2\e}^{cu}(y)$ and $W_{2\e}^{s}(z)$ meet in a single point.
Denoting this intersection by $\iota(y,z)$, the map 
\begin{equation}\label{eq.int}
\iota : W_\e^s(x)\times W_\e^{cu}(x)\to \Ga\ba G    
\end{equation}
is a homeomorphism in a neighborhood of $x$.

\begin{dfn}\label{dfn.bms}\normalfont
A locally finite $AM$-invariant Borel measure $\bms$ on $\Ga\ba G$ is called a \textit{Bowen-Margulis-Sullivan} $(\BMS)$ measure if there exists a linear form $\psi\in\frak a^*$ and a family of measures $\{\bms_x^s\}$ and $\{\bms_x^{cu}\}$ $(x\in\Ga\ba G)$ such that for all $x\in\Ga\ba G$,
\begin{enumerate}
    \item $\bms_x^s$ (resp. $\bms_x^{cu}$) is supported on $W^s(x)$ (resp. $W^{cu}(x)$);
    \item $\bms_x^s.a=e^{-\psi(\log a)}\bms_{x.a}^s$ for all $a\in A^+$;
    \item $\bms=\iota.(\bms_x^s\otimes \bms_x^{cu})$ when restricted to the image of $\iota$. 
\end{enumerate}
\end{dfn}
Note that we may view $\bms$ as a measure on $\Ga\ba G/M$ as well.
\subsection*{Finite $\BMS$ measures}
When $G$ is of rank one, the space $\Ga\ba G/M$ represents a unit tangent bundle of a rank one locally symmetric space, on which the one-dimensional group $A=\{a_t\}$ acts as a geodesic flow.
In this case, the construction of Sullivan \cite{Sul} 
which extends the idea of Patterson \cite{Pat}, 
shows that there exists a measure $\bms$
supported on the non-wandering set of $\{a_t\}$, and family of measures $\bms_x^s$, $\bms_x^{cu}$ verifying the conditions in Definition \ref{dfn.bms}. 

Moreover, one can produce many examples of infinite co-volume subgroups with finite $\BMS$ measures.
Examples include a large class of subgroups such as convex cocompact subgroups, and geometrically finite subgroups of rank one Lie groups \cite{Sul}.
And still, the list is far from being exhaustive, as the example of Dal'bo-Otal-Peign\'e \cite{DOP} suggests.

Turning to the case when $G$ is of higher rank, there always exists a $\BMS$ measure $\bms$ supported on $\Om_\Ga$.
In fact, Quint shows that one can find a $\BMS$ measure for every linear form $\psi\in\frak a^*$ which is tangent to the growth indicator $\psi_\Ga$ (see 
Proposition \ref{prop.Q}).
As a consequence, we have abundance of examples of a measure $\bms$ as in Definition \ref{dfn.bms}.
However, in contrast to the rank one case, 
there was no known example of discrete subgroups admitting a finite $\BMS$ measure except for lattices.

The following theorem is the main result of the paper, in which we classify all discrete subgroups admitting finite $\BMS$ measures on $\Ga\ba G$:


\begin{thm}\label{thm.BMS}
    If $\Ga\ba G$ admits a finite $\BMS$ measure, then there exist connected subgroups $G_0,G_1,\cdots,G_q$ with corresponding Zariski dense discrete subgroups $\Ga_0,\Ga_1,\cdots,\Ga_q$ such that the following holds:
\begin{enumerate}
    \item $G=G_0G_1\cdots G_q$ and $\Ga$ is commensurable with $\Ga_0\Ga_1\cdots\Ga_q$.
    \item $\Ga_0$ is a lattice in $G_0$.
    \item For all $1\leq i\leq q$, $G_i$ is a simple Lie group of rank one, and $\Ga_i\ba G_i$ admits a finite $\BMS$ measure.
\end{enumerate}
\end{thm}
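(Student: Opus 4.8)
The plan is to leverage the higher rank entropy rigidity machinery to force the leafwise (conditional) measures of the finite $\BMS$ measure to have a product structure that reflects the factorization of $G$. I would first fix a finite $\BMS$ measure $\bms$ with dimension form $\psi\in\mathfrak a^*$, and pass to an $A$-ergodic component; since $\bms$ is finite, the Poincar\'e recurrence theorem guarantees that $A$-recurrence holds almost everywhere, which is exactly the hypothesis needed to run the high entropy method of Einsiedler-Katok-Lindenstrauss \cite{EKL}. The central quantitative input is the entropy contribution of each coarse root subgroup: for an element $a\in\op{int}A^+$, the entropy $h_\bms(a)$ decomposes as a sum over the positive roots $\alpha$ of terms governed by the leafwise measures $\bms_x^s$ along the corresponding horospherical directions, and these terms are pinched between $0$ and the maximal (Haar) value determined by $\alpha(\log a)$ times the dimension of the root space.

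The heart of the argument is to analyze the root subgroups one at a time. For each simple factor $G_i$ of $G$ that has rank at least two, I would use the high entropy method: whenever the conditional measure along one root direction is non-trivial but not full, one can combine the non-commutativity of distinct root groups with invariance to generate extra invariance, and in higher rank there are enough commuting and non-commuting relations among the root subgroups to propagate this. The expected conclusion is a sharp dichotomy on each higher rank factor: the leafwise measure along the full horospherical subgroup of $G_i$ is either Haar (forcing the projection of $\Ga$ to $G_i$ to be a lattice, via the classification of measures with maximal entropy / the Lebesgue conditionals forcing homogeneity) or trivial. The finiteness of $\bms$ should rule out the intermediate and the degenerate cases, so that on every higher rank factor the measure must be homogeneous, i.e. $\Ga$ projects to a lattice there. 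On the rank one factors no such rigidity is available, and the conditionals are simply the Patterson-Sullivan type leafwise measures, which is why those factors survive as genuine $\BMS$-carrying discrete subgroups.

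Once the conditional measures are shown to be Haar along all higher rank horospherical directions, I would assemble the global statement. The product structure of the leafwise measures (as emphasized in the abstract) lets one write $\bms_x^s$ as a product of its components along the horospherical subgroups of the individual factors; combined with the Haar-ness on the higher rank factors and the $AM$-invariance, a measure-rigidity / homogeneity argument (in the spirit of Ratner-type classification, or directly via the invariance generated above) identifies the support of $\bms$ with a closed orbit of a product subgroup $G_0 G_1\cdots G_q$, where $G_0$ collects the higher rank factors and the $G_i$ are the rank one factors. Standard arguments then show $\Ga_0:=\Ga\cap G_0$ is a lattice in $G_0$, each $\Ga_i:=\Ga\cap G_i$ carries a finite $\BMS$ measure, and $\Ga$ is commensurable with $\Ga_0\Ga_1\cdots\Ga_q$, which is precisely the asserted decomposition.

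The main obstacle I anticipate is the step that upgrades positive entropy along a higher rank factor to \emph{full} (Haar) conditional measure, and then to genuine homogeneity of $\bms$. The high entropy method typically yields invariance under \emph{some} unipotent subgroups, not immediately under the full horospherical group, so the delicate part is bootstrapping this partial invariance — using the interplay of the root subgroups inside a higher rank factor together with $A$-invariance and the product structure of the leafwise measures — into the full Haar conditional, all while controlling the linear form $\psi$ so that the entropy values are forced to their extremes rather than sitting at some intermediate value. Handling the factors that are neither clearly rank one nor clearly maximal-entropy, and ensuring the argument is insensitive to the choice of $A$-ergodic component, is where the technical weight of the proof will lie.
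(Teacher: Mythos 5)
There are two genuine gaps in your outline, both at places the paper has to work hardest. First, your plan to control the leafwise measures through an entropy decomposition over the roots --- pinching each root's contribution between $0$ and its Haar value and forcing the extremes --- cannot deliver what the high entropy method actually needs as input. A Pesin-type entropy formula only guarantees that \emph{some} coarse Lyapunov leafwise measure is nontrivial (the paper says exactly this in Remark \ref{rmk.PE}); to generate invariance under enough commutators $[U_{[\alpha]},U_{[\beta]}]$ to exhaust a simple higher rank factor, one needs nontriviality (indeed full Zariski closure of the support) for \emph{every} coarse Lyapunov weight. The paper obtains this not from entropy but from a qualitative argument (Lemma \ref{lem.tr}): the product structure $\bms_x^{N^+}\propto\iota(\bms_x^{U_{[\alpha_1]}}\times\cdots\times\bms_x^{U_{[\alpha_\ell]}})$ together with the explicit formula \eqref{eq.2} identifying $\bms_x^{N^+}$ with the conformal measure shows that a degenerate factor would force $\Lambda_\Gamma$ into a proper Zariski closed subset of $\mathcal F$, contradicting Zariski density of the limit set. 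Without this step your ``dichotomy'' (Haar or trivial on each higher rank factor) is unsupported, and finiteness of $\bms$ alone does not rule out intermediate leafwise measures.

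Second, your assembly of the global statement misses the case that forces the most delicate argument: a product of rank one factors, say $G=G_1\times G_2$ with both $G_i$ of rank one and $\pi_i(\Gamma)$ dense (an ``irreducible'' $\Gamma$). Here there is no higher rank simple factor on which to run the commutator machinery, yet the theorem asserts $\Gamma$ is a lattice in the whole product. The paper's route (Proposition \ref{lem.d}) is again the product structure of leafwise measures: it shows the conformal measure $\nu_\psi$ factors as $\nu_1\times\cdots\times\nu_q$ with each $\nu_i$ conformal up to a character under the dense projection $\pi_i(\Gamma)$, hence $G_i$-conformal, hence the $K\cap G_i$-invariant measure, which forces $\bms$ to be Haar. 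Relatedly, the decomposition into $G_0$ versus the rank one pieces is governed by whether each projection $\pi_i(\Gamma)$ is discrete or dense (Lemmas \ref{lem.DD} and \ref{lem.disc}), not by the intersections $\Gamma\cap G_i$ that you use to define the pieces; for an irreducible $\Gamma$ those intersections are trivial and your candidate subgroups $\Gamma_i$ would not satisfy the commensurability claim. You correctly identified the bootstrapping from partial invariance to homogeneity as the hard point, but the mechanism that resolves it is Zariski density plus the product structure of leafwise and conformal measures, not entropy extremization.
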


The theorem leads to a new criterion detecting lattices among the discrete subgroups of higher rank Lie groups.
\begin{cor}\label{cor.lat}
    If $G$ is a connected semisimple real algebraic group with no rank one factor and $\Ga<G$ is a Zariski dense discrete subgroup, then  $\Ga$ is a lattice if and only if $\Ga\ba G$ admits a finite $\BMS$ measure, in which case the $\BMS$ measure is necessarily the Haar measure.
\end{cor}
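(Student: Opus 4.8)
The plan is to read the corollary off Theorem \ref{thm.BMS}: once that classification is in hand, the equivalence is just the specialization to groups with no rank one factor, and the only additional content is the identification of the measure. Concretely there are three things to check: (i) if $\Ga$ is a lattice then $\Ga\ba G$ carries a finite $\BMS$ measure; (ii) conversely, a finite $\BMS$ measure forces $\Ga$ to be a lattice; and (iii) in that case the $\BMS$ measure is the Haar measure. For (i), recall that when $\Ga$ is a lattice the $G$-invariant Haar measure $\bms_{\Haar}$ on $\Ga\ba G$ is finite, and in particular $AM$-invariant. Written in the local coordinates of the map $\iota$ in \eqref{eq.int}, it factors as the product of the Haar measures on $N^-$ and on $P^+=MAN^+$; since the Jacobian of conjugation by $a\in A^+$ on $N^-$ is $e^{-2\rho(\log a)}$, where $2\rho$ is the sum of the positive roots counted with multiplicity, these leafwise Haar measures satisfy the conditions of Definition \ref{dfn.bms} with $\psi=2\rho$. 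Hence $\bms_{\Haar}$ is a finite $\BMS$ measure.

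For (ii), suppose $\Ga\ba G$ admits a finite $\BMS$ measure and apply Theorem \ref{thm.BMS}. We obtain a decomposition $G=G_0G_1\cdots G_q$ in which each $G_i$ with $i\ge 1$ is a simple rank one factor of $G$ appearing in this product. By hypothesis $G$ has no rank one factor, so necessarily $q=0$; thus $G=G_0$ and $\Ga$ is commensurable with $\Ga_0$, which is a lattice in $G_0=G$. Since a finite index subgroup of a lattice is again a lattice and $\Ga$ contains the lattice $\Ga\cap\Ga_0$ with finite index, $\Ga$ is itself a lattice.

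It remains to establish (iii). Let $\bms$ be an arbitrary finite $\BMS$ measure with associated exponent $\psi\in\frak a^*$. By the theory of conformal densities (Proposition \ref{prop.Q} and Quint's lower bound $\psi\ge\psi_\Ga$), the exponent $\psi$ is tangent to the growth indicator $\psi_\Ga$; but a lattice has $\psi_\Ga=2\rho$ on the Weyl chamber, and since $2\rho$ is linear its only tangent form is $2\rho$ itself, so $\psi=2\rho$. With $\psi=2\rho$ the leafwise measures scale exactly as the horospherical Haar measures, so $\bms$ is absolutely continuous with respect to $\bms_{\Haar}$; as a single $a\in\op{int}A^+$ acts ergodically on $(\Ga\ba G,\bms_{\Haar})$ by Moore's theorem, the $A$-invariant measure $\bms$ is forced to be a scalar multiple of $\bms_{\Haar}$. (Equivalently, one identifies the $\psi=2\rho$ conformal density for a lattice with the unique $K$-invariant visual density.)

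The main obstacle is entirely concentrated in step (iii): all of the genuine difficulty of the corollary is already absorbed into Theorem \ref{thm.BMS}, after which (i) and (ii) are immediate bookkeeping, whereas (iii) requires pinning the exponent to $2\rho$ (via $\psi_\Ga=2\rho$ for lattices) and then deducing rigidity of the resulting finite $\BMS$ measure. I expect the delicate point to be this last rigidity, i.e. promoting the correct scaling of the leafwise measures to genuine equality with Haar, which is where either the uniqueness of the maximal conformal density or the Moore ergodicity argument above must be invoked.
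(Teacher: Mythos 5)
Your steps (i) and (ii) are sound: the $K$-invariant measure on $\cal F$ is $(\Ga,2\rho)$-conformal, so a lattice carries the finite $\BMS$ measure \eqref{eq.BMS0} with $\psi=2\rho$, which is the Haar measure; and conversely Theorem \ref{thm.BMS} together with the no-rank-one-factor hypothesis forces $q=0$, so $\Ga$ is commensurable with, hence is, a lattice. The genuine gaps are in step (iii). First, you assert that $\psi$ is tangent to $\psi_\Ga$, but Proposition \ref{prop.Q}(2) gives only the one-sided bound $\psi\geq\psi_\Ga$; tangency (equality at some interior direction) is an additional statement that does not follow from what you cite and is not established in the paper. (One could instead obtain $\psi\leq 2\rho$ from the entropy identity \eqref{eq.ma} combined with the Margulis--Ruelle bound $h_\bms(a)\leq 2\rho(\log a)$, but that is not the argument you make.) Second, and more seriously, the inference ``$\psi=2\rho$, so the leafwise measures scale like the horospherical Haar measures, so $\bms\ll\bms_{\Haar}$'' is false as stated: condition (2) of Definition \ref{dfn.bms} constrains only how the family $\{\bms_x^s\}$ transforms under the $A$-action, not its measure class, and a conformal density with the critical exponent need not be Lebesgue without further input. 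Upgrading the correct exponent to genuinely Haar leafwise measures is precisely the rigidity for which the high entropy method is needed; it cannot be extracted from the scaling relation alone.

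The repair is short and is what the paper's argument amounts to: since $G$ has no rank one factor, every simple factor $G_i$ has rank $\geq 2$, so Proposition \ref{lem.rk2} shows that any finite $\BMS$ measure $\bms$ is $G_i$-invariant for every $i$, hence $G$-invariant, hence Haar. This gives (ii) and (iii) simultaneously --- a finite Haar measure on $\Ga\ba G$ means $\Ga$ is a lattice --- with no need to identify $\psi$, invoke tangency, or argue absolute continuity.
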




 As another application,
 we obtain the following theorem, in a joint work with Edwards and Oh \cite{EFLO} using their previous work \cite{EO}:
 \begin{thm}[Edwards-Fraczyk-Lee-Oh \cite{EFLO}]
     Let $\Ga<G$ be as in Corollary \ref{cor.lat}.
     If $\Ga\ba G$ has infinite volume, then there is no positive $L^2$-integrable Laplace eigenfunction on the associated locally symmetric space $\Ga\ba X$.
 \end{thm}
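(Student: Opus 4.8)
The plan is to combine the eigenfunction--to--conformal-density correspondence of Edwards--Oh \cite{EO} with the classification in Corollary \ref{cor.lat}, arguing by contradiction. So suppose $\phi$ is a positive $L^2$-integrable eigenfunction, $\Delta\phi=-\lambda\phi$ with $\phi>0$ and $\phi\in L^2(\Ga\ba X)$. The first step is to recall, from the general theory of positive solutions of Schr\"odinger-type operators on complete manifolds (Sullivan, Agmon, Fischer-Colbrie--Schoen), that a positive $L^2$-eigenfunction can only occur at the bottom $\lambda_0=\lambda_0(\Ga\ba X)$ of the $L^2$-spectrum: there are no $L^2$-eigenfunctions with eigenvalue below $\lambda_0$, while positive eigenfunctions exist only for eigenvalues at most $\lambda_0$; the two constraints force $\lambda=\lambda_0$, so $\phi$ is the ground state.

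Next I would pass to the universal cover. Lift $\phi$ to a positive $\Ga$-invariant eigenfunction $\tilde\phi$ on $X=G/K$. By the integral representation of positive eigenfunctions over the Furstenberg boundary $G/P$ (Karpelevich, Guivarc'h--Taylor; equivalently the Poisson transform), $\tilde\phi(gK)=\int_{G/P}e^{-\psi(\beta_\xi(gK))}\,d\nu(\xi)$ for a finite measure $\nu$ on $G/P$ and a linear form $\psi\in\fa^*$ tied to the eigenvalue by $\lambda_0=\norm{\rho}^2-\norm{\psi-\rho}^2$, where $\beta_\xi$ denotes the $\fa$-valued Busemann cocycle. The $\Ga$-invariance of $\tilde\phi$ forces the family $\{g_*\nu\}$ to transform as a $(\Ga,\psi)$-conformal density, and positivity together with the ground-state property pins $\psi$ down to the opposition-invariant form tangent to the growth indicator $\psi_\Ga$, so that $\nu$ is a genuine Patterson--Sullivan density in the sense of Quint. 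Pairing $\nu$ on $G/P^+$ with its opposite on $G/P^-$ and the Haar measure on $A$ through the Hopf parametrization then produces a $\BMS$ measure $\bms=\bms_\psi$ on $\Ga\ba G$ exactly as in Definition \ref{dfn.bms}.

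The crucial step --- and the one I expect to be the main obstacle --- is to convert $L^2$-integrability of $\phi$ into finiteness of $\bms_\psi$; this is precisely the input supplied by \cite{EO}. The mechanism is an unfolding computation: writing $\norm{\phi}_{L^2(\Ga\ba X)}^2=\int_{\Ga\ba X}\big(\int_{G/P}e^{-\psi(\beta_\xi(x))}\,d\nu(\xi)\big)^2\,dx$ and expanding the square produces a double boundary integral which, after passing to Hopf coordinates $(\xi,\eta,a)$ on a conull part of $\Ga\ba G$, becomes the integral of a nonnegative kernel against $d\nu(\xi)\,d\nu(\eta)\,da$, i.e. against $\bms_\psi$ up to a weight governed by the Gromov product $\psi(\beta_\xi+\beta_\eta)$. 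The heart of \cite{EO} is to control this weight --- using shadow-lemma estimates for $\nu$ together with the matching of normalizations forced by the opposition-invariance of $\psi$ --- so that the kernel is bounded below by a positive constant on the support of $\bms_\psi$; then $\norm{\phi}_{L^2}^2<\infty$ yields $\bms_\psi(\Ga\ba G)<\infty$. The delicate points will be the convergence of the boundary representation at the critical $\psi$, the identification of the correct opposition-invariant tangent form, and ruling out degeneration of the kernel along directions where the flat spanned by $(\xi,\eta)$ approaches the walls.

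Finally I would invoke the classification. Since $G$ is a connected semisimple real algebraic group with no rank one factor and $\Ga$ is Zariski dense, Corollary \ref{cor.lat} applies to the finite measure $\bms_\psi$ and forces $\Ga$ to be a lattice, whence $\vol(\Ga\ba G)<\infty$. This contradicts the hypothesis that $\Ga\ba G$ has infinite volume. Therefore no positive $L^2$-integrable eigenfunction can exist, which is the assertion of the theorem; equivalently, the bottom of the $L^2$-spectrum carries no atom.
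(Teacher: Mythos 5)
Your proposal follows essentially the same route as the paper's: the theorem is attributed to the companion paper \cite{EFLO}, whose strategy is exactly what you describe --- a positive $L^2$ eigenfunction must be a ground state, it corresponds via the boundary integral representation to a $(\Ga,\psi)$-conformal density whose associated $\BMS$ measure is shown to be finite using the Edwards--Oh machinery \cite{EO}, and then Corollary \ref{cor.lat} forces $\Ga$ to be a lattice, contradicting infinite volume. The analytic heart (passing from $L^2$-integrability to finiteness of the $\BMS$ measure) is deferred by you to \cite{EO}/\cite{EFLO}, which is consistent with how the present paper treats it, namely as input from the companion work rather than something proved here.
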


Note that Theorem \ref{thm.BMS} also recovers Quint's theorem (Theorem \ref{thm.Q}), since there always exists a  $\BMS$ measure supported on $\Om_\Ga$.

\subsection*{Idea of the proof}
 We describe the proof in the case when $G$ is a simple higher rank group, or is of the form $G=G_1\times G_2$ and $\Ga$ is irreducible, i.e. has dense projections onto each factor. 
 
 To show that $\Ga$ is a lattice in $G$, one needs to verify that the volume of $\Ga\bs G$ is finite with respect to the Haar measure. We achieve this by proving that any finite $\BMS$ measure $\bms$ on $\Ga\bs G$ is actually Haar. The proof is based on the high entropy method of Einsiedler-Katok \cite{EK} and Einsiedler-Katok-Lindenstrauss \cite{EKL}, and the product structure of leafwise measures shown in \cite{EL}.
These methods allow to establish additional invariance of a finite $A$-invariant measure $\mu$ on a quotient $\Ga\ba G$ provided that sufficiently many elements in $A$ act on $(\Ga\ba G,\mu)$ with positive entropy.

If $G$ is a simple higher rank group, the presence of the local product structure of $\bms$ together with the product structure of leafwise measures $\bms_x^{N^\pm}$ of $\bms$ along $N^\pm$ (Proposition \ref{lem.pro}) shows that either $\bms_x^{N^\pm}$ is supported on a proper Zariski closed subset of $N^\pm$ or the leafwise measures along every root subgroups of $N^\pm$ are nontrivial.
The first case can be excluded solely using the Zariski density of $\Ga$.
In the second case we finish the proof by applying the high-entropy method.

Now assume that $G=G_1\times G_2$ and $\Ga$ is irreducible.
Associated with $\bms$, there are $\Ga$-conformal measures on $G/P^\pm$ underlying the definition of $\bms$ \eqref{eq.PS}.
The product structure of leafwise measure implies that the conformal measure on $G/P^{\pm}$ is itself a product of measures on $G_i/P_i^{\pm}$ where $P_i^\pm=G_i\cap P^\pm$ $(i=1,2)$. These measure are {\textquotedblleft}almost{\textquotedblright} conformal under the projections of $\Ga$ to $G_i$ $(i=1,2)$. Now, the density of the projections allows us to deduce that the measures are $G_i$-conformal, hence Lebesgue (Proposition \ref{lem.d}). It follows that $\bms$ is Haar.
\subsection*{Outline of the paper}
In Section \ref{sec.B} we gather preliminary results on semisimple Lie groups, Bowen-Margulis-Sullivan measures and the machinery of leafwise measures developed by Einsiedler, Katok and Lindenstrauss.  Section \ref{sec.C} is devoted to the study of leafwise measures of BMS-measures. Here, we identify the leafwise measures with respect to the unipotent radicals of minimal parabolic subgroups in terms of the Patterson-Sullivan measures of $\Ga$ (formula (\ref{eq.2})). We also show that when $\Gamma$ has a discrete projection onto a factor of $G$ the problem can be handled factor by factor (Lemma \ref{lem.disc}). In section \ref{sec.proof} we use the product structure of leafwise measures (Lemma \ref{lem.str}) and Zariski density of $\Ga$ to show that leafwise measures with respect to all root subgroups are non-trivial (Lemma \ref{lem.tr}). We then deduce the additional invariance of the BMS measure (Prop. \ref{lem.rk2} and Prop. \ref{lem.d}) and prove the main theorem. 
\subsection*{Acknowledgement}
We would like to thank Hee Oh for her helpful suggestions and a careful reading of the earlier version of the paper.
Her numerous comments have significantly improved the quality of the paper.
We also thank Manfred Einsiedler for his helpful comments. We thank the anonymous referees for their corrections and helpful remarks. 

\section{Preliminaries}\label{sec.B}
In this section, we will fix notations and recall backgrounds that will be used throughout the paper.

Let $G$ be a connected semisimple real algebraic group, and $\Ga<G$ be a Zariski dense discrete subgroup.
Let $\frak g$ denote the Lie algebra of $G$, $\Theta : \frak g\to\frak g$ be a Cartan involution, $\frak k$ and $\frak p$ respectively be $+1$, $-1$ eigenspace of $\Theta$ so that we have the Cartan decomposition $\frak g=\frak k\oplus\frak p$.
Fix a maximal abelian subspace $\frak a\subset\frak p$ and a closed positive Weyl chamber $\frak a^+\subset\frak a$. 
Let $A=\exp(\frak a)$, $A^+=\exp(\frak a^+)$, $\op{int}A^+$ be the interior of $A^+$, and $K$ be the maximal compact subgroup of $G$ whose Lie algebra is $\frak k$.
Let
\begin{align*}
N^\pm&=\{g\in G: a^{k} ga^{-k}\to e\text{ as }k\to\pm\infty\text{ for some }a\in \op{int}A^+\},
\end{align*}
be a pair of maximal horospherical subgroups.
We also let $P^\pm=MAN^\pm$ be the corresponding minimal parabolic subgroups, where $M$ is the centralizer of $A$ in $K$.

\subsection*{Conformal measures}
Let $\cal F$ be the set of all minimal parabolic subgroups of $G$, which can be identified with $G/P^\pm$.
We define $\sigma : G\times \cal F\to \frak a$ as follows: for $g\in G$ and $\xi\in\cal F$, $\sigma(g,\xi)\in\frak a$ is the unique element satisfying
\begin{equation}\label{eq.Iwa}
gk_\xi\in K\exp({\sigma(g,\xi)})N^-    
\end{equation}
where $k_\xi\in K$ is any element such that $\xi=k_\xi P^-$.
The definition does not depend on the choice of representative $k_\xi\in K$ for $\xi$, and $\sigma$ is called the \textit{Iwasawa cocycle}.

Given a linear form $\psi\in \mathfrak a^*$, a finite Borel measure $\nu$ on $\cal F$ is called a $(\Ga, \psi)$-\textit{conformal} 
measure if, for any $\ga\in \Ga$ and $\xi\in \F$, 
\begin{equation}\label{eq.PS}
 \frac{d(\ga. \nu)}{d\nu}(\xi) =e^{-\psi (\sigma (\ga^{-1}, \xi))},   
\end{equation}
 where $\ga. \nu(E)=\nu(\gamma^{-1} E)$ for any Borel subset $E\subset \F$.
We will refer to $\nu$ as a $\Ga$-conformal measure if it is a $(\Ga,\psi)$-conformal measure for some $\psi\in\frak a^*$.

\subsection*{Bowen-Margulis-Sullivan measures} 
Let $\op{N}_K(\frak a)$ and $\op{Z}_K(\frak a)$ respectively denote the normalizer and the centralizer of $\frak a$ in $K$.
The Weyl group $\cal W$ is then defined as
\begin{equation}\label{eq.Weyl}
\cal W=\op{N}_K(\frak a)/\op{Z}_K(\frak a).    
\end{equation}
Let $\i:\frak a\to\frak a$ be the opposition involution; it is given by $\i=-\op{Ad}_{w_0}$ where $w_0$ is the longest element of the Weyl group $\cal W$.

Let $\psi\in\frak a^*$ be a linear form and $\nu_\psi$, $\nu_{\psi\circ\i}$ be a pair of 
$\Ga$-conformal measures on $\cal F$ associated to $\psi, \psi\circ\i$.
By means of the Hopf parametrization
\begin{align}\label{eq.Hopf}
G/M&\to G/P^-\times G/P^+\times\frak a\\
gM&\mapsto (gP^-, gP^+, \sigma(g,P^-)),\notag
\end{align}
the \textit{Bowen-Margulis-Sullivan} $(\BMS)$ measure 
$\tilde \bms _{\nu_\psi,\nu_{\psi\circ\i}}$ 
is defined to be the locally finite Borel measure on $G/M$ given by:
\begin{equation}\label{eq.BMS0}
  \tilde \bms_{\nu_\psi, \nu_{\psi\circ\i}}=e^{\psi( \cal G(\cdot,\cdot))}\nu_\psi\otimes \nu_{\psi\circ\i}\otimes  \op{Leb}_{\frak a}
\end{equation}
 where $\op{Leb}_{\frak a}$ denotes the Lebesgue density on $\mathfrak a$, and $\cal G$ is the \textit{vector valued Gromov product} defined by
 \begin{equation*}
\cal G(gP^-,gP^+)=\sigma(g,P^-)+\i \,\sigma(g,P^+).     
 \end{equation*}
By the relation \eqref{eq.PS}, $\tilde \bms_{\nu_\psi, \nu_{\psi\circ\i}}$ is left $\Ga$-invariant and descends to a measure on $\Ga\ba G/M$, which we denote by $\bms_{\nu_\psi, \nu_{\psi\circ\i}}$.

The fact that this definition of $\BMS$ measure is equivalent to that given in the introduction, can be derived from the properties shown in \cite[Sec. 4.1]{ELO}.

\subsection*{Limit cone and the growth indicator}
By Cartan decomposition $G=KA^+K$, for every $g\in G$, there exists a unique element $\mu(g)\in\frak a^+$ such that
$$
g\in K\exp(\mu(g)) K.
$$

Benoist introduced the \textit{limit cone} of $\Ga$ \cite{Ben}, denoted by $\cal L_\Ga$,
which is the asymptotic cone of $\{\mu(\ga):\ga\in\Ga\}$.
He showed that if $\Ga$ is Zariski dense, then $\cal L_\Ga\subset \frak a^+$ is a closed convex cone with nonempty interior.

The \textit{growth indicator} of $\Ga$, denoted by $\psi_\Ga$, is a function $\psi_\Ga : \frak a^+\to \bb R_{\geq 0}\cup\{-\infty\}$ defined by Quint \cite{Quint3} as
\begin{equation*}\label{eq.GI}
\psi_\Ga(v)=\norm{v} \inf_{v\in\cal C}h_{\cal C};    
\end{equation*}
here $\norm{\cdot}$ denotes the norm on $\frak a$ induced from the Killing form on $\frak g$, infimum is taken over all open cone $\cal C\subset \frak a^+$ containing $v$, and $h_{\cal C}$ denotes the abscissa of convergence for the series $s\mapsto\sum_{\ga\in\Ga, \mu(\ga)\in\cal C} e^{-s\norm{\mu(\ga)}}$.
We say a linear form $\psi\in\frak a^*$ is \textit{tangent to} $\psi_\Ga$ at $v\in\frak a^+$ if
\begin{equation*}\label{eq.tang}
\text{$\psi\geq \psi_\Ga$ on $\frak a^+$ and $\psi(v)=\psi_\Ga(v)$
}    
\end{equation*}
The following general properties of $\psi_\Ga$ are known (\cite{Quint1, Quint3}):
\begin{prop}[Quint]\label{prop.Q}
    We have:
    \begin{enumerate}
        \item For any $v\in\frak a^+$, $\psi_\Ga(v)\geq 0$ if and only if $v\in\cal L_\Ga$.
        Moreover, $\psi_\Ga$ is strictly positive in the interior of $\cal L_\Ga$.
        \item If there exists a $(\Ga,\psi)$-conformal measure on $\cal F$, then $\psi\geq \psi_\Ga$.
        \item For any $\psi\in\frak a^*$ tangent to $\psi_\Ga$ at $v\in\op{int}\frak a^+$, there exists a $(\Ga,\psi)$-conformal measure on $\cal F$.
    \end{enumerate}
\end{prop}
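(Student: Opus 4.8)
All three assertions are due to Quint \cite{Quint1, Quint3}, so I would follow his strategy; here is the shape of the argument. Consider first the equivalence $\psi_\Ga(v)\geq 0\iff v\in\cal L_\Ga$ in part (1), which is essentially a reformulation of the definitions. If $v\notin\cal L_\Ga$, then since $\cal L_\Ga$ is the asymptotic cone of $\{\mu(\ga):\ga\in\Ga\}$ and $\Ga$ is discrete, there is an open cone $\cal C\ni v$ meeting $\{\mu(\ga)\}$ in a bounded, hence finite, set; the defining series is then a finite sum, so $h_{\cal C}=-\infty$ and $\psi_\Ga(v)=-\infty$. Conversely, if $v\in\cal L_\Ga$ then every open cone $\cal C\ni v$ contains infinitely many $\mu(\ga)$, so $\sum_{\mu(\ga)\in\cal C}e^{-s\norm{\mu(\ga)}}$ diverges at $s=0$, giving $h_{\cal C}\geq 0$ and thus $\psi_\Ga(v)\geq 0$. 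For strict positivity on $\op{int}\cal L_\Ga$ I would invoke the concavity of $\psi_\Ga$ (Quint's theorem, resting on subadditivity of the Cartan projection and Benoist's convexity of $\cal L_\Ga$): a degree-one homogeneous concave function that is nonnegative on the cone $\cal L_\Ga$ and vanishes at one interior point $v_0$ must vanish identically, since for any $w$ in the cone one has $0=\psi_\Ga(v_0)=\psi_\Ga((v_0-\e w)+\e w)\geq \psi_\Ga(v_0-\e w)+\e\,\psi_\Ga(w)\geq \e\,\psi_\Ga(w)\geq 0$, forcing $\psi_\Ga(w)=0$. But $\sup_{\norm v=1}\psi_\Ga(v)$ is the critical exponent of $\Ga$ (the abscissa of convergence of $\sum_\ga e^{-s\norm{\mu(\ga)}}$), which is strictly positive because a Zariski dense $\Ga$ contains a free subgroup and hence grows exponentially. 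This contradiction yields $\psi_\Ga>0$ on $\op{int}\cal L_\Ga$.

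For part (2), suppose $\nu$ is a finite $(\Ga,\psi)$-conformal measure on $\cal F$. The key tool is a higher rank shadow lemma: for $\ga$ with $\mu(\ga)$ in a fixed interior cone, $\nu(\cal O_R(\ga))\asymp e^{-\psi(\mu(\ga))}$, where $\cal O_R(\ga)$ is the shadow of the ball $B(\ga o,R)$. This follows from the conformality relation \eqref{eq.PS} once one controls the Iwasawa cocycle $\sigma(\ga^{-1},\cdot)$ on the shadow by $\mu(\ga)$ up to bounded error. Granting it, fix $v\in\frak a^+$ and a small cone $\cal C\ni v$; a bounded-multiplicity covering argument together with finiteness of $\nu$ gives $\sum_{\mu(\ga)\in\cal C}e^{-\psi(\mu(\ga))}<\infty$. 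By continuity of $\psi$ one has $\psi(\mu(\ga))\leq(\psi(v)/\norm v+\e)\norm{\mu(\ga)}$ on $\cal C$, so $e^{-(\psi(v)/\norm v+\e)\norm{\mu(\ga)}}\leq e^{-\psi(\mu(\ga))}$ and hence $\sum_{\mu(\ga)\in\cal C}e^{-(\psi(v)/\norm v+\e)\norm{\mu(\ga)}}<\infty$, i.e. $h_{\cal C}\leq \psi(v)/\norm v+\e$. Taking the infimum over shrinking cones and letting $\e\to 0$ gives $\psi_\Ga(v)\leq\psi(v)$.

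For part (3), let $\psi$ be tangent to $\psi_\Ga$ at an interior direction $v$. I would produce the conformal measure by the higher rank Patterson--Sullivan procedure: form the normalized orbital measures
\[
\nu_s=\Big(\sum_{\ga\in\Ga}e^{-s\psi(\mu(\ga))}\Big)^{-1}\sum_{\ga\in\Ga}e^{-s\psi(\mu(\ga))}\,\delta_{\ga o}
\]
on a suitable compactification of the symmetric space and extract a weak-$*$ limit as $s\to 1^{+}$, projecting it to a finite nonzero measure $\nu$ on $\cal F$. The tangency of $\psi$ to $\psi_\Ga$ at the interior direction $v$ is exactly what makes $s=1$ the abscissa of convergence and forces the mass of $\nu_s$ to escape to the boundary along the direction $v$, so the limit is supported on the limit set of $\Ga$ in $\cal F$. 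The conformality relation \eqref{eq.PS} is then verified on the limit by comparing, for large $\ga$, the Iwasawa cocycle $\sigma(\ga,\cdot)$ with the Cartan projection $\mu(\ga)$, which agree up to lower-order terms; in the borderline case where the series already converges at $s=1$ one first inserts a slowly varying Patterson weight to restore divergence.

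The genuinely hard inputs, and the places where I expect the real obstacles, are (i) the concavity of $\psi_\Ga$ used in part (1), and (ii) the higher rank shadow lemma together with the control of the Iwasawa cocycle by the Cartan projection used in parts (2) and (3). In rank $\geq 2$ both of these degenerate near the walls of $\frak a^+$, which is precisely why part (3) requires tangency at an \emph{interior} direction and why the shadow estimate is stated only for $\mu(\ga)$ in an interior cone. These are the steps that genuinely use the Zariski density of $\Ga$ and Benoist's regularity of the limit cone, rather than soft measure-theoretic arguments.
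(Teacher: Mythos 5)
The paper does not actually prove this proposition: it is quoted as known background with a bare citation to Quint \cite{Quint1, Quint3}, so there is no internal argument to compare yours against. Your outline is a faithful reconstruction of Quint's original proofs — the limit-cone equivalence straight from the definitions, strict positivity via concavity of $\psi_\Ga$ together with positivity of the critical exponent of a Zariski dense subgroup, the higher rank shadow lemma for part (2), and the Patterson–Sullivan limiting construction for part (3) — and you correctly isolate the genuinely hard inputs (concavity of $\psi_\Ga$, the shadow lemma, and the comparison of $\sigma(\ga,\cdot)$ with $\mu(\ga)$ away from the walls of $\frak a^+$).

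One step in your part (2) is overstated and would fail as written: bounded multiplicity of shadows plus finiteness of $\nu$ does \emph{not} give $\sum_{\mu(\ga)\in\cal C}e^{-\psi(\mu(\ga))}<\infty$. Shadows of a sequence of orbit points escaping in a fixed direction all contain the corresponding limit point, so the covering has bounded multiplicity only shell by shell (say $T\le\norm{\mu(\ga)}<T+1$); and indeed the Poincaré series evaluated at the conformal form itself diverges for divergence-type groups even though the conformal measure exists, so that series cannot converge in general. What the shell-by-shell argument actually yields is the counting bound $\#\{\ga:\mu(\ga)\in\cal C,\ \norm{\mu(\ga)}\le T\}\ll e^{T\sup\{\psi(u):u\in\cal C,\,\norm u=1\}}$, which is all you need: it gives $h_{\cal C}\le\sup\{\psi(u):u\in\cal C,\,\norm u=1\}$ directly, and shrinking $\cal C$ to $v$ yields $\psi_\Ga(v)\le\psi(v)$. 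With that correction your sketch is the standard argument.
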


\subsection*{Conditional measures}
Let $\cal B$ be the Borel $\sigma$-algebra on $\Ga\ba G$ and $\cal A\subset\cal B$ be a countably generated sub $\sigma$-algebra.
For $x\in\Ga\ba G$, let $[x]_{\cal A}$ denote the $\cal A$-atom of $x$, which is the intersection of all elements of $\cal A$ containing $x$.

Given a finite Borel measure $\mu$ on $\Ga\ba G$, there exists a $\mu$-conull set $X\subset\Ga\ba G$ and a family of finite measures $\{\mu_x^{\cal A}:x\in X\}$ of total mass one with the following properties  \cite[Thm. 5.9]{EL}:
\begin{enumerate}
    \item $\mu_x^{\cal A}$ is supported on $[x]_{\cal A}$ and $\mu_x^{\cal A}=\mu_{y}^{\cal A}$ if $[x]_{\cal A}=[y]_{\cal A}$,
    \item the map $x\mapsto \mu_x^{\cal A}(B)$ is $\cal A$-measurable for all $B\in\cal B$,
    \item and we have
    \begin{equation}\label{eq.ED}
\mu=\int_{\Ga\ba G} \mu_x^{\cal A}\,d\mu(x).
\end{equation}    
\end{enumerate}
They are unique in the following sense: if $\la_x^{\cal A}$ is another family of measures satisfying the   above, then there exists a $\mu$-conull set $X'\subset\Ga\ba G$ such that $\mu_x^{\cal A}=\la_x^{\cal A}$ for all $x\in X'$.

\subsection*{Leafwise measures}
For a closed subgroup $S<G$, there exists a $\mu$-conull set $X'\subset \Ga\ba G$ and a family $\{\mu_x^S:x\in X'\}$ of locally finite Borel measures on $S$, well-defined up to a proportionality called the \textit{leafwise measure} of $\mu$ along $S$ \cite[Thm. 6.3]{EL}.
Locally, they can be described as follows:

For $R>0$, let $B_S(R)$ denote the ball of radius $R$ in $S$ centered at $e$.
For any $x\in\Ga\ba G$ such that the map $s\mapsto x.s$ $(s\in S)$ is injective, there exists a local cross section $C\subset G$ containing $e$ such that the map 
 \begin{align*}
 \phi:C\times B_S(R)&\to\Ga\ba G\\
 (g,s)&\mapsto x.(gs)
 \end{align*}
 is injective, where $C$ (and hence $\phi$) depends on $R$ and $x$.
Let $\cal{B}_C$ denote the Borel $\sigma$-algebra on $\phi(C\times\{e\})$ induced from $\cal B$, and $\{\emptyset,S\}$ the trivial $\sigma$-algebra on $S$.
Up to proportionality, $\mu_x^S$ is then characterized\footnote{This follows from the defining equation \cite[(6.20b)]{EL} of $\mu_x^S$.
For the place where local cross sections are considered, see \cite[Def. 6.6]{EL}.
}
as follows:
\begin{equation}\label{eq.leaf}
\phi(e,\cdot)_*\mu_x^{S}|_{B_S(R)}\propto (\mu|_{\Im\phi})_x^{\cal B_{\cal C}\otimes\{\emptyset,S\}}.   
\end{equation}

Now let $\mu$ be a finite $A$-invariant measure.
We record here several results concerning the leafwise measures of $\mu$ that will be needed.
The main reference is \cite{EL}.
For a closed subgroup $S<G$, we have:
\begin{lem}\label{lem.ad}\cite[Lem. 7.16]{EL}
If $A$ normalizes $S$, then for all $a\in A$,
$$
\op{\theta}_* \mu_{x}^{S}\propto\mu_{x.a}^{S}
$$
for $\mu$-a.e. $x$ where $\theta : S\to S$ is the map given by $\theta(s)=a^{-1}sa$.
\end{lem}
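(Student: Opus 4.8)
The plan is to exploit the $A$-invariance of $\mu$: right translation $R_a\colon x\mapsto x.a$ preserves $\mu$, and because $A$ normalizes $S$ it intertwines the $S$-action on the leaf through $x$ with the $S$-action on the leaf through $x.a$ via exactly the automorphism $\theta(s)=a^{-1}sa$. Since the leafwise measure $\mu_x^S$ is built from the conditional measures through the characterization \eqref{eq.leaf}, and conditional measures transform functorially under measure-preserving maps, pushing forward the local picture under $R_a$ converts $\mu_x^S$ into $\mu_{x.a}^S$ up to the distortion coming from $\theta$, which is precisely the asserted proportionality.

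Concretely, I would fix $a\in A$ and $R>0$, and for a point $x$ at which $s\mapsto x.s$ is injective choose the local cross section $C$ and the chart $\phi=\phi_x\colon C\times B_S(R)\to\Ga\ba G$, $(g,s)\mapsto x.(gs)$, as in the definition. The key computation is
\[
R_a\big(\phi_x(g,s)\big)=x.gs.a=(x.a).(a^{-1}ga)(a^{-1}sa)=(x.a).\big(\alpha(g)\,\theta(s)\big),
\]
where $\alpha(g)=a^{-1}ga$. Since $A$ normalizes $S$, the map $\theta$ is an automorphism of $S$ carrying $B_S(R)$ onto a neighbourhood of $e$, and $C':=\alpha(C)$ is again a local cross section at $x.a$. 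Thus $R_a$ identifies $\phi_x$ with the chart $\phi_{x.a}\colon C'\times\theta(B_S(R))\to\Ga\ba G$ at $x.a$, intertwining the $S$-directions by $\theta$; in particular $R_a\circ\phi_x(e,\cdot)=\phi_{x.a}(e,\theta(\cdot))$ because $\alpha(e)=e$.

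Next I would invoke the transformation rule for conditional measures under the measure-preserving map $R_a$ \cite{EL}: since $(R_a)_*\mu=\mu$ and $R_a$ carries the $\sigma$-algebra $\mathcal B_C\otimes\{\emptyset,S\}$ used at $x$ to $\mathcal B_{C'}\otimes\{\emptyset,S\}$ used at $x.a$, one has, for $\mu$-a.e.\ $x$,
\[
(R_a)_*\big(\mu|_{\Im\phi_x}\big)_x^{\mathcal B_C\otimes\{\emptyset,S\}}\;\propto\;\big(\mu|_{\Im\phi_{x.a}}\big)_{x.a}^{\mathcal B_{C'}\otimes\{\emptyset,S\}}.
\]
Rewriting both sides via \eqref{eq.leaf}, the left-hand side equals $(R_a)_*\phi_x(e,\cdot)_*\mu_x^S=\phi_{x.a}(e,\cdot)_*\theta_*\mu_x^S$ by the intertwining above, while the right-hand side is proportional to $\phi_{x.a}(e,\cdot)_*\mu_{x.a}^S$. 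As $\phi_{x.a}(e,\cdot)$ is injective, cancelling it yields $\theta_*\mu_x^S\propto\mu_{x.a}^S$ on $\theta(B_S(R))$.

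The point requiring care, and the main technical obstacle, is globalizing from a fixed radius $R$ to all of $S$ while keeping the exceptional null set independent of the test data. Because $\theta$ distorts radii, I would run the argument along a sequence $R_n\to\infty$, obtaining for each $n$ a $\mu$-conull set on which the proportionality holds on $\theta(B_S(R_n))$; intersecting these and using that a leafwise measure is determined up to a single global constant by its restrictions to the balls $B_S(R_n)$ (the uniqueness clause in \cite{EL}) upgrades the local identities to the single relation $\theta_*\mu_x^S\propto\mu_{x.a}^S$ for $\mu$-a.e.\ $x$. The only remaining subtlety is the standard one, built into the construction of leafwise measures, that the null set can be chosen independently of the Borel sets being tested, so that the proportionality holds as an identity of measures rather than merely on each fixed set.
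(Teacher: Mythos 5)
Your proof is correct and matches the standard argument: the paper itself gives no proof of this lemma, citing it directly as \cite[Lem.~7.16]{EL}, and your reconstruction --- pushing the local chart $\phi_x$ forward under the measure-preserving map $R_a$, observing that $R_a\circ\phi_x(g,s)=\phi_{x.a}(\alpha(g),\theta(s))$, invoking functoriality of conditional measures, and then globalizing over radii $R_n\to\infty$ with the uniqueness clause --- is essentially the proof given in that reference. The subtleties you flag (distortion of balls by $\theta$, choosing the null set independently of the test data) are exactly the ones handled there.
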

A leafwise measure will be called \textit{trivial} if it is proportional to a Dirac measure supported at $e$.
By Lemma \ref{lem.ad}, if $\mu$ is a finite $A$-invariant, ergodic measure then either $\mu_x^{S}$ is trivial $\mu$-a.e. or nontrivial $\mu$-a.e.
\begin{lem}\cite[Lem. 9.18]{EL}\label{lem.inv}
    Let $L\leq S$ be a closed subgroup.
    Suppose that for every $\ell\in L$, $\ell.\mu_x^{S}=\mu_x^{S}$ for $\mu$-a.e. $x$,
    Then $\mu$ is $L$-invariant.
\end{lem}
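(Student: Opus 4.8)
The plan is to read the $L$-invariance of $\mu$ off the disintegration \eqref{eq.ED} together with the local description \eqref{eq.leaf} of leafwise measures, using the hypothesis only to control how $\mu_x^S$ behaves under translation along $L$. The first, preparatory, step is a countability reduction. The hypothesis supplies, for each fixed $\ell\in L$, a $\mu$-conull set on which $\ell.\mu_x^S=\mu_x^S$. I would fix a countable dense subgroup $L_0\le L$, intersect the conull sets over $\ell\in L_0$, and then use the (weak-$*$) continuity of the translation action on locally finite measures on $S$ to promote invariance under $L_0$ to invariance under its closure. This yields a single $\mu$-conull set $X_0$ such that for every $x\in X_0$ the measure $\mu_x^S$ is invariant under translation by every element of $L$.

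Next I would recall the compatibility of leafwise measures with the right $S$-action from \cite{EL}: for $s\in S$ and $\mu$-a.e.\ $x$, the measures $\mu_{x.s}^S$ and $\mu_x^S$ agree up to translation by $s$ and an overall scalar. Applying this with $s=\ell\in L$ and combining with the previous step gives $\mu_{x.\ell}^S\propto\mu_x^S$ for $\mu$-a.e.\ $x$ and every $\ell\in L$. This says that moving the base point along $L$ does not change the leafwise measure, which is precisely what renders the various almost-everywhere statements consistent as the base point varies within an $S$-plaque.

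To globalize, it suffices to verify $\int f(x.\ell)\,d\mu(x)=\int f\,d\mu$ for all $f\in C_c(\Ga\ba G)$ and $\ell\in L$. I would disintegrate $\mu=\int\mu_x^{\cal A}\,d\mu(x)$ along a $\sigma$-algebra $\cal A$ subordinate to the $S$-foliation whose atoms are modeled on $B_S(R)$; by \eqref{eq.leaf}, $\mu_x^{\cal A}$ is, up to a scalar, the image of $\mu_x^S|_{B_S(R)}$ under $s\mapsto x.s$. Feeding this into \eqref{eq.ED} rewrites the difference $\int f(x.\ell)\,d\mu-\int f\,d\mu$ as a $\mu$-average, over base points $x$, of
\[
\int_{B_S(R)} f(x.(s\ell))\,d\mu_x^S(s)-\int_{B_S(R)} f(x.s)\,d\mu_x^S(s),
\]
and the translation-invariance of $\mu_x^S$ forces the integrand difference to be supported on the boundary shell $B_S(R)\,\triangle\, B_S(R)\ell$.

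The hard part is exactly this last step: since the atoms have finite size $R$, the $\ell$-translate of an atom is not an atom, so the boundary shell contributes an error that must be shown to be negligible. I would handle it by letting $R\to\infty$ and invoking a Besicovitch-type covering (maximal inequality) argument, together with local finiteness and genuine $L$-invariance of $\mu_x^S$, to force the normalized mass of $B_S(R)\,\triangle\, B_S(R)\ell$ to tend to $0$ along a suitable sequence of radii. The remaining issues are routine bookkeeping: leafwise measures are defined only up to proportionality, and one must track the side on which $L$ acts on each $S$-plaque. Since the subgroups $L$ relevant here are unimodular, left and right Haar measures coincide and these conventions do not affect the conclusion.
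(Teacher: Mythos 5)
First, a point of comparison: the paper does not prove this lemma at all --- it is quoted from Einsiedler--Lindenstrauss \cite[Lem.~9.18]{EL} --- so your argument has to stand entirely on its own. Its skeleton (countable-dense reduction to a single conull set; base-point compatibility $\mu_{x.\ell}^S\propto\mu_x^S$; disintegration over a $\sigma$-algebra subordinate to the $S$-foliation; comparison of $\mu$ with its push-forward under $x\mapsto x.\ell$ atom by atom) is reasonable, but the step you yourself flag as ``the hard part'' is a genuine gap rather than a technicality. You need, for $\mu$-a.e.\ $x$ and along some sequence $R_n\to\infty$, that $\mu_x^S\bigl(B_S(R_n)\,\triangle\,B_S(R_n)\ell\bigr)=o\bigl(\mu_x^S(B_S(R_n))\bigr)$. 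A locally finite measure need not satisfy any doubling or F{\o}lner-type property: for $\nu=\sum_n 2^{2^n}\delta_{s_n}$ with $\|s_n\|=n$, every unit-width annulus around $\partial B_S(R)$ carries more mass than the entire ball it bounds, at every scale. The only additional input you have is $L$-invariance of $\mu_x^S$, which (via disintegration over $L\backslash S$) controls the measure along $L$-cosets but says nothing about the transverse measure; the symmetric difference $B_S(R)\triangle B_S(R)\ell$ concentrates near $\partial B_S(R)$, exactly where the $L$-slices of the ball are short, so the Haar-on-cosets structure does not obviously make the shell negligible, and for $L$ a proper subgroup of $S$ (which the statement allows) no maximal-inequality argument is actually exhibited. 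This estimate is the crux of your proof and is left unproved.

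There are two further structural problems. The assertion ``$\mu_{x.\ell}^S\propto\mu_x^S$ for $\mu$-a.e.\ $x$'' presupposes that $x.\ell$ lies in the conull set on which leafwise measures are defined and compatible; since you do not yet know that the $\ell$-translate of $\mu$ is absolutely continuous with respect to $\mu$, this is circular as stated and must be arranged inside the construction (e.g.\ by building the good set to be invariant under the countable subgroup $L_0$). Second, subordinate $\sigma$-algebras have atoms of the form $x.V_x$ with $x$-dependent shapes $B_S(\delta)\subset V_x\subset B_S(R)$ rather than balls centered at the base point, and $\sigma$-algebras with atoms of prescribed large size exist in general only on subsets of positive (not full) measure; so ``letting $R\to\infty$'' requires an exhaustion over countably many such $\sigma$-algebras together with a shell estimate that is uniform over the variable shapes $V_x\triangle V_x\ell$, which is even less accessible than the version for balls. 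Each of these can in principle be repaired, but combined with the missing shell estimate they mean the proposal is an outline of a possible strategy, not a proof.
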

In particular, $\mu$ is $S$-invariant if and only if $\mu_x^{S}$ is Haar $\mu$-a.e.
For a fixed unit ball $B_1$ in $N^+$ and $a\in A^+$, define
$$
D_{\mu}(a,N^+)(x):=\lim_{k\to\infty}\frac{1}{k}\log\mu_x^{N^+}(a^{-k}B_1a^k).
$$
\begin{lem}\cite[Thm. 7.6]{EL}\label{lem.ent}
For any $a\in A^+$,
$D_{\mu}(a,N^+)(x)$ is defined $\mu$-a.e., and the measure theoretic entropy of $a$ with respect to $\mu$ is given by
    \begin{equation}\label{eq.EL}
h_\mu(a)=\int_{\Ga\ba G} D_\mu(a,N^+)(x)\,d\mu(x).
\end{equation}
\end{lem}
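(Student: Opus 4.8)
The plan is to identify $h_\mu(a)$ with the exponential growth rate, along the unstable horospherical leaves, of the leafwise measures $\mu_x^{N^+}$, and then to recognise that rate as the integral of $D_\mu(a,N^+)$. Throughout write $T$ for the measure-preserving transformation $x\mapsto x.a$ of $(\Ga\ba G,\mu)$. The first observation is that $N^+$ is precisely the \emph{unstable} horospherical subgroup of $T$: since $a^{-k}ga^{k}\to e$ as $k\to+\infty$ exactly when $g\in N^-$, the leaves $xN^-$ are contracted and the leaves $xN^+$ are expanded under forward iteration of $T$, and conjugation by $a^{-1}$ dilates $B_1\subset N^+$ to the growing balls $a^{-k}B_1a^{k}$.

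Next I would construct a countably generated $\sigma$-algebra $\cal A$ that is \emph{subordinate} to the $N^+$-foliation, so that its atoms $[x]_{\cal A}$ are precompact, relatively open plaques of $xN^+$, and that is $T$-decreasing, in the sense that $T\cal A$ (whose atoms are the enlarged plaques) is coarser than $\cal A$; such $\cal A$ exists because $T$ expands the $N^+$-leaves. The crucial input from the general theory of \cite{EL} is that the entropy of $T$ is carried entirely by the unstable horospherical direction, i.e.\ no entropy escapes into the neutral $AM$- or stable $N^-$-directions, so that $h_\mu(a)=H_\mu(\cal A\mid T\cal A)$; this is the Abramov--Rokhlin type identity for a subordinate generator. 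Using the local description \eqref{eq.leaf}, on each atom the conditional measure $\mu_x^{\cal A}$ is the normalised restriction of the leafwise measure $\mu_x^{N^+}$ to the corresponding plaque.

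I would then telescope. Since $\cal A\supseteq T\cal A\supseteq\cdots\supseteq T^{k}\cal A$ and $\mu$ is $T$-invariant, $H_\mu(\cal A\mid T^{k}\cal A)=\sum_{j=0}^{k-1}H_\mu(T^{j}\cal A\mid T^{j+1}\cal A)=k\,H_\mu(\cal A\mid T\cal A)$, so that $h_\mu(a)=\tfrac1k H_\mu(\cal A\mid T^{k}\cal A)$ for every $k$. Expressing the right-hand side through conditional measures and invoking the restriction property from \eqref{eq.leaf} gives
\[
H_\mu(\cal A\mid T^{k}\cal A)=\int_{\Ga\ba G}\Big(\log\mu_x^{N^+}\big([x]_{T^{k}\cal A}\big)-\log\mu_x^{N^+}\big([x]_{\cal A}\big)\Big)\,d\mu(x),
\]
where by Lemma \ref{lem.ad} the large atom $[x]_{T^{k}\cal A}$ agrees, up to a bounded multiplicative factor, with the conjugated ball $x\cdot a^{-k}B_1a^{k}$. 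Dividing by $k$, the fixed term $\tfrac1k\log\mu_x^{N^+}([x]_{\cal A})$ disappears, while $\tfrac1k\log\mu_x^{N^+}(a^{-k}B_1a^{k})$ converges $\mu$-a.e.\ to $D_\mu(a,N^+)(x)$; the almost-everywhere existence of this limit (the first assertion of the lemma) follows from Kingman's subadditive ergodic theorem, the relevant cocycle being built from $k\mapsto\log\mu_x^{N^+}(a^{-k}B_1a^{k})$ via the nesting $a^{-k}B_1a^{k}\subset a^{-(k+\ell)}B_1a^{k+\ell}$ together with the quasi-invariance of Lemma \ref{lem.ad}. Passing to the limit inside the integral then yields $h_\mu(a)=\int D_\mu(a,N^+)\,d\mu$.

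The main obstacle is twofold. First, establishing $h_\mu(a)=H_\mu(\cal A\mid T\cal A)$ together with the construction of $\cal A$ with the required countable generation and measurability: this is precisely the horospherical entropy machinery of \cite{EL} and constitutes the technical heart of the statement. Second, the boundary-term control in the final step, where one must show that replacing the combinatorial atoms $[x]_{T^{k}\cal A}$ by the geometric balls $a^{-k}B_1a^{k}$ costs only a subexponential factor, and that the functions $\tfrac1k\log\mu_x^{N^+}(a^{-k}B_1a^{k})$ are uniformly integrable, so that $\mu$-a.e.\ convergence upgrades to convergence of the integrals.
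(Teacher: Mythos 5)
The paper does not prove this lemma --- it is quoted verbatim from \cite[Thm.~7.6]{EL} (together with \cite[Thm.~7.9]{EL} for the identification of the ``entropy contribution'' of $N^+$ with the full entropy $h_\mu(a)$) --- so the comparison is with the argument in that source. Your outline reproduces it faithfully: an $a$-descending $\sigma$-algebra $\cal A$ subordinate to the $N^+$-foliation, the identity $h_\mu(a)=H_\mu(\cal A\mid T\cal A)$ (which, as you correctly flag, is the technical heart and rests on $N^+$ being the \emph{entire} unstable horospherical subgroup of $a$, so that no entropy is carried by the $AM$- or $N^-$-directions), the telescoping of conditional entropies, and the replacement of the atoms $[x]_{T^k\cal A}$ by the conjugated balls $x\,a^{-k}B_1a^k$ at subexponential cost. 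One correction to the pointwise convergence step: the nesting $a^{-k}B_1a^k\subset a^{-(k+\ell)}B_1a^{k+\ell}$ only gives monotonicity and does not by itself set up a subadditive cocycle for Kingman. What actually makes the argument work is that Lemma~\ref{lem.ad}, with the normalization $\mu_y^{N^+}(B_1)=1$, turns $f_k(x):=\log\mu_x^{N^+}(a^{-k}B_1a^k)$ into an \emph{exactly additive} cocycle, $f_{k+\ell}(x)=f_k(x)+f_\ell(x.a^k)$, so that Birkhoff's ergodic theorem gives the $\mu$-a.e.\ existence of $D_\mu(a,N^+)$ and the convergence of the integrals once $f_1\in L^1(\mu)$ is established; that integrability (equivalently, the finiteness of the conditional information function $I_\mu(\cal A\mid T\cal A)$) is the genuine point to verify, and is where the effort in \cite{EL} is concentrated, rather than in the boundary-term estimate you single out.
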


For $a\in A$, let $\op{Z}_G(a)$ denote the centralizer of $\langle a\rangle $.
The proposition below describes the product structure of the leafwise measures.
Together with the high-entropy method of \cite{EK} and \cite{EKL}, this is the most important ingredient of our proof.
\begin{prop}\cite[Cor. 8.8]{EL}\label{lem.pro}
    Let $U<G$ be a subgroup normalized and contracted by an element $ a\in A$.
    Assume that $T$ is a subgroup of $\op{Z}_G(a)$ normalizing $U$ and $H=T\ltimes U$.
    Then for any finite $\langle a\rangle $-invariant measure $\mu$ on $\Ga\ba G$,
    $$
    \mu_x^{H}\propto \iota(\mu_x^{T}\times\mu_x^{U})
    $$
    for $\mu$-a.e. $x$, where $\iota : T\times U\to H$ is the product map $(t,u)\mapsto tu$.
\end{prop}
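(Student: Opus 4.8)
The plan is to read the product structure off the defining relation \eqref{eq.leaf}, using the two equivariance properties of leafwise measures supplied by \cite{EL}: the point-translation equivariance built into their construction, and the conjugation equivariance of Lemma \ref{lem.ad}. The structural input that drives the proof is the behaviour of the conjugation $\theta_a(h)=a^{-1}ha$ on $H=T\ltimes U$: since $T\le\op{Z}_G(a)$ it fixes $T$ pointwise, while by hypothesis it contracts $U$. First I would fix the coordinates coming from the semidirect-product diffeomorphism $\iota:T\times U\to H$, $(t,u)\mapsto tu$, and regard $\mu_x^H$ as a locally finite measure on $T\times U$; the goal is then to show that in these coordinates $\mu_x^H$ is, up to a positive scalar, the product $\mu_x^T\times\mu_x^U$.

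Because leafwise measures are only locally finite, I would argue locally: fix relatively compact boxes $B_T\subset T$ and $B_U\subset U$ and disintegrate the restriction of $\mu_x^H$ to $\iota(B_T\times B_U)$ along the coordinate projection to $T$. The fibrewise statement is that the conditional measures along the cosets $tU$ are translates of a leafwise measure along $U$ and, crucially, are independent of the $T$-coordinate $t$; this is where the contraction is decisive, since iterating Lemma \ref{lem.ad} fixes the $T$-coordinate while driving the $U$-coordinate to $e$, so that combining the $a$-conjugation equivariance with the point-translation equivariance forces the $U$-conditionals to be honest $U$-leafwise measures $\mu_\bullet^U$ that are blind to the $T$-direction. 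The transverse statement is that the induced quotient measure on $B_T$ agrees with $\mu_x^T$, which I would extract from the neutrality of the $a$-dynamics on $T$ (as $T\le\op{Z}_G(a)$) together with the construction of $\mu_x^T$ in \cite{EL}. Combining the two exhibits $\mu_x^H$ restricted to $\iota(B_T\times B_U)$ as a product, and exhausting $T\times U$ by such boxes yields $\mu_x^H\propto\iota_*(\mu_x^T\times\mu_x^U)$.

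The main obstacle is the $T$-independence of the $U$-conditionals together with their clean identification with $\mu_x^U$. Since leafwise measures are defined only up to a positive scalar, one must either normalize (say by $\mu_x^U(B_U(1))=1$) or propagate proportionality constants through the contraction, and one must control the $\mu$-null exceptional sets uniformly in the $T$-direction rather than fibre by fibre. I expect this to require the martingale and maximal-inequality technology underlying \cite{EL}, with the contraction of $U$ by $a$ used exactly to kill the dependence on $T$.
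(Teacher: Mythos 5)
The paper does not prove this proposition at all: it is imported verbatim from Einsiedler--Lindenstrauss \cite[Cor.~8.8]{EL}, so there is no in-paper argument to compare against. Your sketch reproduces the strategy actually used in \cite{EL} for that result --- disintegrate $\mu_x^H$ along the $T$-coordinate, use Lemma \ref{lem.ad} together with the translation compatibility of leafwise measures and the fact that conjugation by $a$ fixes $T\le\op{Z}_G(a)$ while contracting $U$ to show the $U$-conditionals are $t$-independent copies of $\mu_x^U$ and the transverse measure is $\mu_x^T$ --- and you correctly flag the real technical burden (normalization of the proportionality constants and uniform control of null sets), so this is essentially the same approach as the cited source rather than a new route.
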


\section{Ergodic properties of $\BMS$ measures}\label{sec.C}
Let $G$ be a connected semisimple real algebraic group and $\Ga<G$ be a Zariski dense discrete subgroup.
We fix $\psi\in\frak a^*$ and a pair of $\Ga$-conformal measures $\nu_\psi$, $\nu_{\psi\circ\i}$ on $\cal F$ and let
\begin{equation}\label{eq.sfm}
\mathsf{m}=\bms_{\nu_\psi, \nu_{\psi\circ\i}}    
\end{equation}
denote the associated $\BMS$ measure on $\Ga\ba G$.

\subsection*{Conservative $\BMS$ measures}
Let $\{a_t\}\subset A$ be a one-parameter subgroup.
We recall the following definitions:
\begin{enumerate}
    \item A Borel subset $B\subset\Ga\ba G$ is a \textit{wandering set}, if for $\bms$-a.e. $x\in B$,
$$
\int_{\bb R}{\mathbbm{1}_B(xa_t)}\,dt<\infty.
$$
    \item We say $(\Ga\ba G,\bms,\{a_t\})$ is \textit{conservative} if there exists no wandering set $B$ such that $\bms(B)>0$.
\end{enumerate}
Note that if $\bms(\Ga\ba G)<\infty$, then $(\Ga\ba G,\bms,\{a_t\})$ is conservative by Poincar\'e recurrence theorem.

For any $\BMS$ measure on $\Ga\ba G$, the following dichotomy is known:
\begin{lem}\cite{BLLO}\label{lem.erg}
    Let $\{a_t\}\subset A$ be a one-parameter subgroup containing an element of $\op{int}A^+$.
    Then $(\Ga\ba G,\bms,\{a_t\})$ is conservative if and only if $(\Ga\ba G,\bms,\{a_t\}\times M)$ is ergodic.
    In particular, if $\bms(\Ga\ba G)<\infty$, then it is $AM$-ergodic.
\end{lem}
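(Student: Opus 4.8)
The plan is to establish the two implications of the dichotomy separately, with the bulk of the work in ``conservative $\Rightarrow$ ergodic'', and then to read off the finite-volume consequence. Throughout I would pass to $\Ga\ba G/M$: since $M$ is exactly the compact fibre of $\Ga\ba G\to\Ga\ba G/M$ and centralizes $A$, ergodicity of $\{a_t\}\times M$ on $(\Ga\ba G,\bms)$ is equivalent to ergodicity of $\{a_t\}$ on $(\Ga\ba G/M,\bms)$, where the Hopf coordinates \eqref{eq.Hopf} are available. For the easy implication ``ergodic $\Rightarrow$ conservative'', I would use the Hopf decomposition $\Ga\ba G=\mathcal C\sqcup\mathcal D$ of the flow $\{a_t\}$ into its conservative and dissipative parts. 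Both pieces are $\{a_t\}$-invariant, and since $M$ centralizes $a_t$ they are also $M$-invariant, hence $(\{a_t\}\times M)$-invariant. Ergodicity forces one of them to be null; the totally dissipative alternative $\bms(\mathcal C)=0$ is impossible, because on $\mathcal D$ the flow is measurably a translation flow $Y\times\bb R$, which carries many nonconstant invariant functions and so cannot be ergodic. Therefore $\bms(\mathcal D)=0$ and the flow is conservative.

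The content is the converse implication. Conservativity makes the Hopf ratio ergodic theorem available: for $f\in L^1(\bms)$ and a fixed $g\in L^1(\bms)$ with $g>0$, the forward averages $\int_0^T f(xa_t)\,dt\big/\int_0^T g(xa_t)\,dt$ converge $\bms$-a.e. to an $\{a_t\}$-invariant function $F=\hat f/\hat g$, and the backward averages converge to the same limit, the invariant $\sigma$-algebra being shared by the flow and its inverse. Taking $f,g$ continuous with compact support and using that $a_t$, with $\log a_1\in\op{int}\frak a^+$, uniformly contracts $N^-$ as $t\to+\infty$ and $N^+$ as $t\to-\infty$, a comparison of nearby orbits shows that $F$ is leafwise constant along the stable leaves $xN^-$ and the unstable leaves $xN^+$. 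Through the local product structure furnished by $\iota$ in \eqref{eq.int}, which identifies a neighborhood of $x$ with $xN^-_\e\times xP^+_\e$ for $P^+=MAN^+$, this leafwise constancy, together with the $M$- and $\{a_t\}$-invariance inherent in the acting group, reduces matters to showing that $F$ has no dependence on the directions of $A$ transverse to $\{a_t\}$.

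Removing this transverse $A$-dependence is the genuinely higher-rank step, and the one I expect to be the main obstacle: unlike rank one, where $A=\{a_t\}$ and $N^-AN^+M$ fills a neighborhood, here the stable and unstable leaves and the one-parameter flow together do not exhaust a neighborhood. I would exploit the $\Ga$-equivariance of the Hopf parametrization \eqref{eq.Hopf}, under which $\ga\in\Ga$ translates the $\frak a$-coordinate $\sigma(\cdot,P^-)$ by the Iwasawa cocycle value $\sigma(\ga,\cdot)$; the $\Ga$-invariance of $F$ then makes it invariant under translation by all such vectors. Since $\Ga$ is Zariski dense, the subgroup of $\frak a$ they generate is dense (Benoist \cite{Ben}), so reducing modulo $\bb R\log a_1$ and combining with the $\{a_t\}$-invariance already in hand forces $F$ to be $\bms$-a.e. constant; hence $\{a_t\}$ is ergodic on $\Ga\ba G/M$ and $\{a_t\}\times M$ on $\Ga\ba G$.

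Finally, when $\bms(\Ga\ba G)<\infty$, Poincar\'e recurrence yields conservativity for every one-parameter subgroup through $\op{int}A^+$, hence $(\{a_t\}\times M)$-ergodicity; since $\{a_t\}\times M\subset AM$, every $AM$-invariant set is $(\{a_t\}\times M)$-invariant and so null or conull, which gives $AM$-ergodicity. Besides the transverse-direction step above, the one remaining technical care is the orbit comparison underpinning the leafwise constancy of $F$ in infinite measure, where the ratio denominators may grow sublinearly and the comparison must be carried out with care.
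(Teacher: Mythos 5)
First, a point of comparison: the paper does not prove Lemma \ref{lem.erg} at all --- it is quoted from \cite{BLLO} --- so there is no internal argument to measure your proposal against. Your outline does follow the strategy one would expect (and which \cite{BLLO} essentially carries out): a Hopf ratio-ergodic-theorem argument giving leafwise constancy of the limit function $F$ along $N^{\pm}$, followed by a separate argument to remove the remaining degrees of freedom. But as written there are two genuine gaps. In the direction ``ergodic $\Rightarrow$ conservative'' you assert that a totally dissipative flow ``carries many nonconstant invariant functions and so cannot be ergodic.'' That is false as a general principle: translation on $(\bb R,\op{Leb})$ is totally dissipative and ergodic. One must rule out that $(\Ga\ba G,\bms)$ is a single $\{a_t\}M$-orbit modulo null sets, which requires the structure of $\bms$ (non-atomicity of the conformal measures, Zariski density of $\La_\Ga$); you never invoke this.

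The more serious gap is in the step you yourself flag as the main obstacle. In the coordinates \eqref{eq.Hopf} a stable leaf is $\{(gP^-,\eta,\sigma(g,P^-))\}$ with only the $G/P^+$-coordinate varying, so constancy along $N^-$-leaves does kill the dependence on $\xi^+$; but an $N^+$-leaf is $\{(\eta,gP^+,v(\eta))\}$, a \emph{graph} whose $\frak a$-component varies with $\eta$, so constancy along $N^+$-leaves does not decouple $\xi^-$ from the $\frak a$-coordinate. Your proposed fix treats the $\Ga$-action as a pure translation in $\frak a$ by $\sigma(\ga,\cdot)$; in fact $\ga$ simultaneously moves $(\xi^-,\xi^+)$, so before any translation-invariance in $\frak a/\bb R\log a_1$ can be extracted one must already know that $F$ is independent of $(\xi^-,\xi^+)$, i.e.\ ergodicity of $\Ga$ on $(\F^{(2)},\nu_\psi\otimes\nu_{\psi\circ\i})$ --- itself a substantial part of the Hopf--Tsuji--Sullivan dichotomy, not addressed in your sketch. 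Moreover the density statement you attribute to Benoist concerns the group generated by the Jordan/Cartan projections of $\Ga$, not the set of cocycle values $\{\sigma(\ga,\xi):\ga\in\Ga,\ \xi\in\La_\Ga\}$ modulo $\bb R\log a_1$; showing that the group of \emph{essential values} of this cocycle is large enough is precisely the technical heart of the higher-rank statement in \cite{BLLO} and cannot be waved through. The final reduction (finite measure $\Rightarrow$ conservativity by Poincar\'e recurrence $\Rightarrow$ $AM$-ergodicity) is fine once the dichotomy is granted.
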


\subsection*{Limit set}
Let $C(\cal F)$ be the set of all continuous functions on $\cal F$.
The \textit{limit set} $\La_\Ga\subset \cal F$ is defined by
\begin{equation*}\label{eq.lim}
\La_\Ga=\{\xi\in\cal F: \ga_i.\nu\to \delta_\xi\text{ for some }\ga_i\to\infty\text{ in }\Ga\}    
\end{equation*}
where $\nu$ denotes the unique finite $K$-invariant measure on $\cal F$ of total mass one, $\delta_\xi$ denotes the Dirac mass at $\xi$, and the convergence takes place in the weak-star topology on the dual of $C(\cal F)$.
Since $\Ga$ is Zariski dense, $\La_\Ga$ is the unique $\Ga$-minimal subset of $\cal F$, and is Zariski dense in $\cal F$ \cite[Lem. 3.6]{Ben}.
If $\{a_t\}$ is a one-parameter subsemigroup containing an element of $\op{int}A^+$, then for $g\in G$, the following can be checked from the definition above:
\begin{equation}\label{eq.gat}
    \text{if $[g]a_{t_i}\in\Ga\ba G$ is bounded for some $t_i\to+\infty$, then $gP^{-}\in\La_\Ga$.}
\end{equation}
\subsection*{The set $\Om_\Ga$}
Let $\cal F^{(2)}$ denote the $G$-orbit of $(P^-,P^+)\in \cal F\times\cal F$; it is the unique open $G$-orbit in $\cal F\times\cal F$.
We define
\begin{equation}\label{eq.La2}
\La_{\Ga}^{(2)}=(\La_{\Ga}\times\La_{\Ga})\cap\cal F^{(2)},\quad\text{ and }\quad\tilde\Om_\Ga=\La_\Ga^{(2)}\times\frak a.    
\end{equation}
The image of the embedding \eqref{eq.Hopf} is given by $\cal F^{(2)}\times\frak a$, and hence we may consider $\tilde\Om_\Ga$ as a subset of $G/M$.
The set $\Om_\Ga\subset \Ga\ba G/M$ in the introduction (see Theorem \ref{thm.Q}) can then be identified with $\Ga\ba\tilde\Om_\Ga$.

In the proof below, for $w\in\cal W$ \eqref{eq.Weyl}, we will use the expressions 
\begin{equation}\label{eq.waw}
waw^{-1} (a\in A),\quad wP^\pm,    
\end{equation}
where $w$ should be understood as a group element in $\op{N}_K(\frak a)$ representing it; note that they do not depend on the choice of the representatives.
We have:
\begin{lem}\label{lem.Om}
    If $\bms(\Ga\ba G)<\infty$, then $\bms$ is supported on $\Om_\Ga$.
\end{lem}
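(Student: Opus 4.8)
The goal is to show that a \emph{finite} $\BMS$ measure $\bms$ is supported on $\Ga\ba\tilde\Om_\Ga$, i.e.\ that $\bms$ gives zero mass to the complement of $\tilde\Om_\Ga$ in $G/M$. Recall from \eqref{eq.BMS0} that, in the Hopf parametrization, $\tilde\bms$ is equivalent to $\nu_\psi\otimes\nu_{\psi\circ\i}\otimes\op{Leb}_{\frak a}$ on $\cal F^{(2)}\times\frak a$. Since the conformal measures $\nu_\psi,\nu_{\psi\circ\i}$ are supported on the limit set $\La_\Ga$ (a standard consequence of $\Ga$-minimality of $\La_\Ga$ together with the conformality relation \eqref{eq.PS}), the measure $\tilde\bms$ is already supported on $(\La_\Ga\times\La_\Ga)\cap\cal F^{(2)})\times\frak a=\tilde\Om_\Ga$ as a subset of $G/M$. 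So at the level of the \emph{upstairs} measure there is essentially nothing to prove beyond the support of the conformal measures.

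The plan is therefore to reduce the statement to the support property of the conformal measures, and I would organize it as follows. First I would record that $\op{supp}\nu_\psi\subset\La_\Ga$: if $\xi\in\op{supp}\nu_\psi$ then by the conformality relation \eqref{eq.PS} the $\Ga$-orbit closure of $\op{supp}\nu_\psi$ is contained in $\op{supp}\nu_\psi$, so $\op{supp}\nu_\psi$ is a nonempty closed $\Ga$-invariant subset of $\cal F$; since $\La_\Ga$ is the unique $\Ga$-minimal subset, $\La_\Ga\subset\op{supp}\nu_\psi$, and conversely minimality together with the fact that any weak-$*$ limit $\ga_i.\nu_\psi$ concentrates on $\La_\Ga$ gives $\op{supp}\nu_\psi\subset\La_\Ga$. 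The same holds for $\nu_{\psi\circ\i}$. Second, by \eqref{eq.BMS0}, $\tilde\bms$ is absolutely continuous with respect to $\nu_\psi\otimes\nu_{\psi\circ\i}\otimes\op{Leb}_{\frak a}$ (the density $e^{\psi(\cal G(\cdot,\cdot))}$ is positive and finite on $\cal F^{(2)}$), so $\tilde\bms$ is supported on $(\La_\Ga\times\La_\Ga)\cap\cal F^{(2)})\times\frak a=\tilde\Om_\Ga$. Passing to the quotient, $\bms$ is supported on $\Ga\ba\tilde\Om_\Ga=\Om_\Ga$.

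I expect the only subtlety — and the reason the finiteness hypothesis enters — to be the passage between the two descriptions of the $\BMS$ measure. The excerpt gives two a priori different definitions: the axiomatic one in Definition \ref{dfn.bms} (via leafwise families $\bms_x^s,\bms_x^{cu}$ and a linear form $\psi$) and the explicit Hopf-coordinate formula \eqref{eq.BMS0} built from a pair of conformal measures. The statement we must prove is about the measure $\bms$ of the introduction, so one should either invoke the stated equivalence of the two definitions (attributed in the text to \cite[Sec.~4.1]{ELO}) to reduce to the formula \eqref{eq.BMS0}, or argue directly from Definition \ref{dfn.bms} that the stable/central-unstable leafwise families force concentration on points whose forward and backward endpoints lie in $\La_\Ga$. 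I would take the former route, since \eqref{eq.BMS0} makes the support computation immediate.

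The step I anticipate as the genuine obstacle is making precise that the nonwandering/limit-set condition \eqref{eq.gat} is exactly captured by $\tilde\Om_\Ga$, and in particular that $\La_\Ga^{(2)}=(\La_\Ga\times\La_\Ga)\cap\cal F^{(2)}$ is the correct support set rather than the full product $\La_\Ga\times\La_\Ga$. The intersection with $\cal F^{(2)}$ is forced because the Hopf parametrization \eqref{eq.Hopf} only covers the open $G$-orbit $\cal F^{(2)}\times\frak a$, so a point $(gP^-,gP^+,\sigma(g,P^-))$ automatically has $(gP^-,gP^+)\in\cal F^{(2)}$; there is no extra content to verify here, only the observation that the image of \eqref{eq.Hopf} meets $\La_\Ga\times\La_\Ga$ precisely in $\La_\Ga^{(2)}$. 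Granting the equivalence of the two $\BMS$ definitions, the argument is short; its whole weight rests on the support property of conformal measures on $\La_\Ga$, which is where Zariski density of $\Ga$ and the uniqueness of the $\Ga$-minimal set are used.
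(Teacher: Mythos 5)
There is a genuine gap at the very first step: the claim that $\supp\nu_\psi\subset\La_\Ga$ is \emph{not} a standard consequence of conformality and the uniqueness of the $\Ga$-minimal set, and it is false for general conformal measures. Conformality shows that $\supp\nu_\psi$ is a nonempty closed $\Ga$-invariant subset of $\cal F$, which by minimality gives only the inclusion $\La_\Ga\subset\supp\nu_\psi$ --- the opposite of what you need. Your appeal to weak-$*$ limits does not give the reverse inclusion: the unique $K$-invariant probability measure $\nu$ on $\cal F$ is $(\Ga,2\rho)$-conformal for \emph{every} discrete $\Ga$ (being $G$-quasi-invariant with cocycle $e^{-2\rho(\sigma(\ga^{-1},\cdot))}$), its translates $\ga_i.\nu$ do converge to Dirac masses at points of $\La_\Ga$ by the very definition of the limit set, and yet $\supp\nu=\cal F$, which is strictly larger than $\La_\Ga$ whenever $\La_\Ga\neq\cal F$. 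So a conformal measure can perfectly well charge the complement of the limit set; the content of the lemma is precisely that this cannot happen once the associated BMS measure is \emph{finite}, and your argument never uses finiteness except for the side issue of matching the two definitions of $\bms$ (which is not actually needed here, since in Section \ref{sec.C} the measure is already given by the Hopf-coordinate formula \eqref{eq.sfm}). In effect you have assumed the conclusion.

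The paper's proof supplies exactly the missing mechanism. Finiteness gives Poincar\'e recurrence, hence conservativity of $\bms$ under each one-parameter semigroup $\{wa_tw^{-1}\}$, $w\in\cal W$; the conull set $X_0$ of points recurrent under all of these satisfies $gwP^-\in\La_\Ga$ for every $w\in\cal W$ by \eqref{eq.gat}, and in particular $gP^\pm\in\La_\Ga$ for $\bms$-a.e.\ $[g]$, which is the assertion of the lemma in the Hopf parametrization. If you wish to keep your structure and first establish that $\nu_\psi$ and $\nu_{\psi\circ\i}$ are (essentially) carried by $\La_\Ga$, you would still have to run this recurrence argument and then project; there is no way to shortcut it with soft minimality considerations alone.
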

\begin{proof}
Let $\{a_t\}$ be a fixed one-parameter subsemigroup containing an element of $\op{int}A^+$ and $\{w a_t w^{-1}\}$ be subsemigroups for each $w\in\cal W$.
Since $\bms$ is finite, it is conservative with respect to $\{w a_t w^{-1}\}$, and hence
$$
X_w:=\{[g]\in\Ga\ba G: [g](wa_{t_i}w^{-1})\text{ is bounded for some }t_i\to+\infty\}
$$
is $\bms$-conull.
Setting $X_0=\cap\{ X_w:w\in\cal W\}$, \eqref{eq.gat} implies that $gwP^-\in\La_\Ga$ for all $[g]\in X_0$ and $w\in\cal W$.
In particular, $gP^\pm\in\La_{\Ga}$ for all $[g]\in X_0$ and the lemma follows in view of the Hopf parametrization \eqref{eq.Hopf} and \eqref{eq.La2}.
\end{proof}

\subsection*{Leafwise measures of $\bms$ along $N^+$}
The leafwise measures $\bms_{x}^{N^+}$ of the $\BMS$ measure $\bms$ along $N^+$, can be explicitly described from \eqref{eq.BMS0};
their densities are given by
\begin{equation}\label{eq.2}
d\bms_{x}^{N^+}(n)=e^{\psi(\sigma(gn,P^-))}d\nu_{\psi}(gnP^-),
\end{equation}
where $g\in G$ is such that $x=[g]$.
The expression \eqref{eq.2} does not depend on the choice of the representative $g$, by the relation \eqref{eq.PS}.

Let $a\in A^+$.
Now a direct computation as in \cite[Lem. 4.2]{ELO} shows
$$
\bms_x^{N^+}(a^{-k}B_1a^k)=e^{k\cdot\psi(\log a)}\bms_{xa^k}^{N^+}(B_1).
$$
Since $\{xa^k:k\in\bb N\}$ is recurrent for $\bms$-a.e. $x$ by Poincar\'e recurrence,
$$
D_{\bms}(a,N^+)(x)=\lim_{k\to\infty}\frac{1}{k}\log \bms_x^{N^+}(a^{-k}B_1a^k)=\psi(\log a),
$$
for $\bms$-a.e. $x$.
By Lemma \ref{lem.ent},
we have
\begin{equation}\label{eq.ma}
h_\bms(a)
=\psi(\log a).
\end{equation}
\begin{rmk}\label{lem.pos}
    If $\bms(\Ga\ba G)<\infty$, then for all $a\in\op{int}A^+$,
    $$
    h_\bms(a)>0.
    $$
\end{rmk}
\begin{proof}
    If $\bms(\Ga\ba G)<\infty$, then $(\Ga\ba G, \bms)$ is conservative for the action of any one-parameter subgroup of $A$.
    This together with \cite[Lem. 4.6]{Ben} implies that the limit cone $\cal L_\Ga$ coincides with $\frak a^+$.
    On the other hand, since $\psi\in\frak a^*$ is a linear form for which there exists a $(\Ga,\psi)$-conformal measure on $\cal F$, we have $\psi\geq \psi_\Ga$ by 
    Proposition \ref{prop.Q}(2).
    Since $\psi_\Ga$ is strictly positive in the interior of $\cal L_\Ga$ 
    by Proposition \ref{prop.Q}(1),
    so is $\psi$, and hence the lemma follows by \eqref{eq.ma}.
\end{proof}
\begin{rmk}\label{rmk.PE}
\normalfont
    Remark \ref{lem.pos} provides many elements of $a\in A^+$ such that $h_{\bms}(a)>0$.
    By a Pesin \cite{Pes} type entropy formula obtained in \cite{EL}, this information can be used to prove the existence of a root subgroup $U< N^+$ for which $\bms_x^{U}$ is nontrivial $\bms$-a.e.
    However, the proof of Theorem \ref{thm.BMS} does not rely on the calculation of entropy.
    Our proof requires the knowledge of nontriviality of $\bms_x^U$ for \textit{all} root subgroups $U<N^+$ (Lemma \ref{lem.tr}) and entropy formula as in \cite{Pes} will not be sufficient to draw this conclusion.
\end{rmk}

\subsection*{$\BMS$ measures on the product $G_1\times G_2$}
Consider the case
$$
G=G_1\times G_2
$$
where $G_1$ and $G_2$ are semisimple real algebraic groups without compact factors.
We will also make the identifications $G_1\simeq G_1\times\{e\}$ and $G_2\simeq \{e\}\times G_2$.
Let $\pi_i : G\to G_i$ denote the projection map, and set $\Ga_i:=\pi_i(\Ga)$ $(i=1,2)$, to ease the notation.
Denote by $\La_{\Ga_i}$ the limit set of $\Ga_i$ in $\cal F_i:=G_i/P_i^-$, where $P_i^\pm=G_i\cap P^\pm$ $(i=1,2)$.
Note that $\Ga_i$ is Zariski dense in $G_i$ and hence $\La_{\Ga_i}$ is the unique $\Ga_i$-minimal set in $\cal F_i$ by \cite[Lem. 3.6]{Ben}.

Let $p_1 : G/P^-\to G_1/P_1^-$ be the projection map.
For a $\Ga$-conformal measure $\nu$ supported on $\La_{\Ga}$, let
\begin{equation}\label{eq.nu}
\nu=\int_{\La_{\Ga_1}}\nu_{\xi_1}\,d(p_{1*}\nu)(\xi_1)    
\end{equation}
where $\nu_{\xi_1}$ denotes the conditional measure of $\nu$ along the fiber $p_1^{-1}(\xi_1)$;
this is obtained from \eqref{eq.ED} by taking $\mu=\nu$, and 
$\cal A=p_1^{-1}(\cal B_1)$ where $\cal B_1$ denotes the Borel $\sigma$-algebra on $G_1/P_1^-$.
A direct computation shows:
\begin{lem}\label{lem.E}
We have the following:
\begin{enumerate}
    \item $p_{1*}\nu$ is a $\Ga\cap G_1$-conformal measure whose support is contained in $\La_{\Ga_1}$;
    \item for $p_{1*}\nu$-a.e. $\xi_1\in\La_{\Ga_1}$, $\nu_{\xi_1}$ is a $\Ga\cap G_2$-conformal measure.
\end{enumerate}
\end{lem}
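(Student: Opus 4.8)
The plan is to exploit the product decompositions induced by $G=G_1\times G_2$ and to track how the conformality relation \eqref{eq.PS} behaves under the projection $p_1$. First I would record the factorizations $\cal F=\cal F_1\times\cal F_2$ (coming from $P^-=P_1^-\times P_2^-$), $K=K_1\times K_2$, $A=A_1A_2$ and $N^-=N_1^-N_2^-$; applying the Iwasawa decomposition \eqref{eq.Iwa} coordinate-wise then gives $\sigma(g,\xi)=(\sigma_1(g_1,\xi_1),\sigma_2(g_2,\xi_2))$ for $g=(g_1,g_2)$ and $\xi=(\xi_1,\xi_2)$, where $\sigma_i$ denotes the Iwasawa cocycle of $G_i$. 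Writing $\frak a=\frak a_1\oplus\frak a_2$ and $\psi=\psi_1+\psi_2$ with $\psi_i=\psi|_{\frak a_i}$, the exponent in \eqref{eq.PS} splits as $\psi(\sigma(\ga^{-1},\xi))=\psi_1(\sigma_1(\ga_1^{-1},\xi_1))+\psi_2(\sigma_2(\ga_2^{-1},\xi_2))$. The two key observations are then immediate: for $\ga=(\ga_1,e)\in\Ga\cap G_1$ this exponent depends only on $\xi_1$, while for $\ga=(e,\ga_2)\in\Ga\cap G_2$ it depends only on $\xi_2$ and $\ga$ acts fibrewise for $p_1$, that is $p_1(\ga\xi)=p_1(\xi)$.

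For part (1), fix $\ga=(\ga_1,e)\in\Ga\cap G_1$ and set $F(\xi_1)=e^{-\psi_1(\sigma_1(\ga_1^{-1},\xi_1))}$. By the previous paragraph $\ga.\nu=(F\circ p_1)\,\nu$, and since $F\circ p_1$ is pulled back from the base, pushing forward under the equivariant map $p_1$ gives $\ga_1.(p_{1*}\nu)=p_{1*}(\ga.\nu)=F\cdot p_{1*}\nu$, which is precisely the $(\Ga\cap G_1,\psi_1)$-conformality of $p_{1*}\nu$. For the support I would show $p_1(\La_\Ga)=\La_{\Ga_1}$: since $\La_\Ga$ is $\Ga$-minimal, any closed nonempty $\Ga_1$-invariant $C\subseteq p_1(\La_\Ga)$ has $p_1^{-1}(C)\cap\La_\Ga$ closed, nonempty and $\Ga$-invariant, hence equal to $\La_\Ga$, forcing $C=p_1(\La_\Ga)$; thus $p_1(\La_\Ga)$ is $\Ga_1$-minimal and coincides with the unique $\Ga_1$-minimal set $\La_{\Ga_1}$ (\cite[Lem. 3.6]{Ben}). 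As $\nu$ is supported on the compact set $\La_\Ga$, it follows that $\operatorname{supp}(p_{1*}\nu)\subseteq\overline{p_1(\operatorname{supp}\nu)}\subseteq p_1(\La_\Ga)=\La_{\Ga_1}$.

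For part (2), fix $\ga=(e,\ga_2)\in\Ga\cap G_2$ and put $g_\ga(\xi)=e^{-\psi_2(\sigma_2(\ga_2^{-1},\xi_2))}$, a function of $\xi_2$ alone. Because $\ga$ preserves every fibre of $p_1$, it preserves the sub-$\sigma$-algebra $\cal A=p_1^{-1}(\cal B_1)$ used in \eqref{eq.nu} and fixes each atom setwise; consequently $p_{1*}(\ga.\nu)=p_{1*}\nu$ and the $\cal A$-conditional measures of $\ga.\nu$ are the $\ga$-pushforwards of those of $\nu$, i.e.\ $(\ga.\nu)_{\xi_1}=\ga.\nu_{\xi_1}$. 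On the other hand, disintegrating $\ga.\nu=g_\ga\,\nu$ directly shows that its conditional over $\xi_1$ is proportional to $g_\ga\,\nu_{\xi_1}$, and the proportionality constant is $1$ for a.e.\ $\xi_1$ because $p_{1*}(\ga.\nu)=p_{1*}\nu$ forces $\int g_\ga\,d\nu_{\xi_1}=1$. By the uniqueness clause of the disintegration \eqref{eq.ED}, the two descriptions agree for $p_{1*}\nu$-a.e.\ $\xi_1$, yielding $d(\ga.\nu_{\xi_1})/d\nu_{\xi_1}(\xi_2)=e^{-\psi_2(\sigma_2(\ga_2^{-1},\xi_2))}$, the $(\Ga\cap G_2,\psi_2)$-conformality of $\nu_{\xi_1}$ viewed on $\cal F_2$, for this single $\ga$. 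Since $\Ga\cap G_2$ is countable, I would then intersect the resulting conull sets over all $\ga\in\Ga\cap G_2$ to obtain one conull set of $\xi_1$ on which $\nu_{\xi_1}$ is conformal for every element simultaneously.

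The main obstacle is this last disintegration step: one must justify carefully that, because $\ga$ acts along the fibres of $p_1$, the $\cal A$-conditional measures of $\ga.\nu$ are exactly $\ga.\nu_{\xi_1}$, and then match this against the conditionals of $g_\ga\nu$ through the uniqueness in \eqref{eq.ED}, keeping track of the mass-one normalization and of the uniform-in-$\ga$ almost-everywhere statement. The remaining ingredients—the factorization of the Iwasawa cocycle and of $\psi$, and the minimality argument for the support—are routine.
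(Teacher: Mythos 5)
Your proposal is correct and follows essentially the same route as the paper: factor the Iwasawa cocycle and $\psi$ along $\frak a=\frak a_1\oplus\frak a_2$, push forward for (1) together with the uniqueness of the $\Ga_1$-minimal set for the support, and compare the two disintegrations of $\ga.\nu$ via the uniqueness clause of \eqref{eq.ED} for (2). Your explicit handling of the mass-one normalization and the intersection over the countably many $\ga\in\Ga\cap G_2$ is a slightly more careful write-up of the same argument.
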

\begin{proof}
    Let $\psi\in\frak a^*$ be the linear form associated to $\nu$ and $\frak a_i\subset\frak a$ be the Lie algebra of $A\cap G_i$ $(i=1,2)$.
    We can write $\sigma$ uniquely as a sum $\sigma_1+\sigma_2$ of $\frak a_i$-valued cocyles $\sigma_i:G_i\times G_i/P_i^-\to\frak a_i$.
    Note that $\sigma(g_i,\cdot)=\sigma_i(g_i,p_i(\cdot))$ for all $g_i\in G_i$.
    Let $\mathbbm{1}_{G_i/P_i^-}$ denote the constant function on $G_i/P_i^-$ which is identically 1.
    
    For (1), let $\ga\in\Ga\cap G_1$ be arbitrary and $f\in C(G_1/P_1^-)$ be a continuous function.
    By \eqref{eq.PS},
    \begin{align*}
    &\ga.(p_{1*}\nu)(f)= (p_{1*}\nu)(f(\ga\,\cdot))=\nu((f\otimes\mathbbm{1}_{G_2/P_2^-})(\ga\,\cdot))\\
    &=\int_{G/P^-}e^{-\psi(\sigma(\ga^{-1},\xi))}f(p_1(\xi))\,d\nu(\xi)\\
    &=\int_{G/P^-}e^{-\psi|_{\frak a_1}(\sigma_1(\ga^{-1},p_1(\xi)))}f(p_1(\xi))\,d\nu(\xi)\\
    &=\int_{G_1/P_1^-}e^{-\psi|_{\frak a_1}(\sigma_1(\ga^{-1},\xi_1))}f(\xi_1)\,d(p_{1*}\nu)(\xi_1)
    \end{align*}
    This implies that $p_{1*}\nu$ is a $(\Ga\cap G_1, \psi|_{\frak a_1})$-conformal measure.
    Recall that $\La_\Ga$ (resp. $\La_{\Ga_1}$) is the unique closed $\Ga$ (resp. $\Ga_1$)-invariant subset of $G/P^-$ (resp. $G_1/P_1^-$).
    Since $p_1$ is equivariant, it follows that $p_1(\La_{\Ga})=\La_{\Ga_1}$ and $p_{1*}\nu$ is supported on $\La_{\Ga_1}$.

    For (2), let $\ga\in\Ga\cap G_2$ be arbitrary and $f\in C(G_2/P_2^-)$ be a continuous function.
    Similarly as in (1), we have
    \begin{align*}
    &\ga.\nu(\mathbbm{1}_{G_1/P_1^-}\otimes f)=\int_{G/P^-}e^{-\psi|_{\frak a_2}(\sigma_2(\ga^{-1},p_2(\xi)))}f(p_2(\xi))\,d\nu(\xi)\\
    &=\int_{\La_{\Ga_1}}\nu_{\xi_1}(e^{-\psi|_{\frak a_2}(\sigma_2(\ga^{-1},\cdot))}f(\cdot))\,d(p_{1*}\nu)(\xi_1),
    \end{align*}
    where the first equality is due to \eqref{eq.PS} and the second equality is due to \eqref{eq.nu}.
    On the other hand, applying \eqref{eq.nu} in a different way gives
    $$
    \ga.\nu(\mathbbm{1}_{G_1/P_1^-}\otimes f)=\nu(\mathbbm{1}_{G_1/P_1^-}\otimes f(\ga\,\cdot))=\int_{\La_{\Ga_1}}\ga.\nu_{\xi_1}(f)\,d(p_{1*}\nu)(\xi_1).
    $$
    Since $f$ was arbitrary, (2) follows from the uniqueness of the conditional measures \eqref{eq.ED} by comparing the above identities.
\end{proof}
Now suppose that $\bms(\Ga\ba G)<\infty$, and one of $\Ga_i$, say $\Ga_1$ is discrete.
Consider the projection $\Ga\ba \tilde\Om_\Ga\to\Ga_1\ba \tilde\Om_{\Ga_1}$ induced from the map $\Ga\ba G\to \Ga_1\ba G_1$, which we will call $\pi_1$ by abuse of notation.
Similarly as in \eqref{eq.nu}, let
$$
\bms=\int_{\Ga\ba\tilde\Om_{\Ga_1}}\bms_{x_1}\,d(\pi_{1*}\bms)(x_1)
$$
where $\bms_{x_1}$ denotes the conditional measure of $\bms$ along the fiber $\pi_1^{-1}(x_1)$.
    More precisely, denoting by $\cal B$ the Borel sigma-algebra of $\Ga_1\ba G_1$ and $\cal A$ the smallest countably generated sigma-algebra equivalent to $\pi_1^{-1}(\cal B)$ (cf. \cite[Def. 5.7, Prop. 5.8]{EL}), $\bms_{x_1}$ is obtained by taking $\mu=\bms$ from \eqref{eq.ED}.
We have:
\begin{lem}\label{lem.fib}
    Assume that $\bms(\Ga\ba G)<\infty$ and $\Ga_1$ is discrete.
    Then $\Ga\cap G_2$ is Zariski dense in $G_2$, and
    $\bms_{x_1}$ is
    isomorphic to a finite $\BMS$ measure on $\Ga\cap G_2\ba \tilde\Om_{\Ga_2}$ for $\pi_{1*}\bms$-a.e. $x_1\in\Ga_1\ba \tilde\Om_{\Ga_1}$.
\end{lem}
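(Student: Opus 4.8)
The plan is to prove the two assertions of Lemma \ref{lem.fib} in turn: first Zariski density of $\Ga\cap G_2$, then the identification of the fiber measures as finite $\BMS$ measures on $\Ga\cap G_2\ba\tilde\Om_{\Ga_2}$. For the first claim, I would use the hypothesis that $\Ga_1=\pi_1(\Ga)$ is discrete. Since $\Ga$ is a discrete subgroup projecting with discrete image to $G_1$, the kernel of $\pi_1|_\Ga$, which is exactly $\Ga\cap G_2$, must be a normal subgroup of $\Ga$ whose Zariski closure is normalized by $\Ga$. By Zariski density of $\Ga$ in $G=G_1\times G_2$, the closure $H:=\cl{\Ga\cap G_2}^{Zar}$ is a normal subgroup of $G$ contained in $G_2$. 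The key point is that $\Ga\cap G_2$ cannot be too small: because $\Ga_1$ is discrete, $\Ga$ is commensurable (via the projection) to a graph-like extension, and a standard argument shows $\Ga\cap G_2$ is a lattice in its Zariski closure. One then rules out $H$ being a proper normal subgroup by invoking Zariski density of $\Ga$ together with the irreducibility forced by the product structure; I expect this to reduce to showing $\pi_2(\Ga\cap G_2)$ is Zariski dense in $G_2$, which follows since $\Ga$ is Zariski dense and $\Ga_1$ discrete forces the $G_2$-part to carry all the density.

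Next, for the structural claim about $\bms_{x_1}$, I would unwind the Hopf parametrization \eqref{eq.Hopf} fiber by fiber. The projection $\pi_1:\Ga\ba\tilde\Om_\Ga\to\Ga_1\ba\tilde\Om_{\Ga_1}$ factors through the product decompositions $\cal F=\cal F_1\times\cal F_2$, $\frak a=\frak a_1\oplus\frak a_2$, and $\La_\Ga^{(2)}\subset\La_\Ga\times\La_\Ga$. I would first apply Lemma \ref{lem.E} to the conformal measures $\nu_\psi$ and $\nu_{\psi\circ\i}$ underlying $\bms$: part (1) gives that $p_{1*}\nu_\psi$ is $(\Ga\cap G_1,\psi|_{\frak a_1})$-conformal, and part (2) gives that the fiber measures $\nu_{\xi_1}$ are $(\Ga\cap G_2,\psi|_{\frak a_2})$-conformal for a.e.\ $\xi_1$. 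The Gromov product $\cal G$ splits as $\cal G_1+\cal G_2$ along the factor decomposition, so the BMS density $e^{\psi(\cal G(\cdot,\cdot))}$ factors multiplicatively into a $G_1$-part and a $G_2$-part. Disintegrating $\bms$ along $\pi_1$ and comparing with the explicit formula \eqref{eq.BMS0} then identifies $\bms_{x_1}$, up to the isomorphism induced by the Hopf coordinates, with $e^{\psi|_{\frak a_2}(\cal G_2)}\nu_{\xi_1}\otimes\nu'_{\xi_1}\otimes\op{Leb}_{\frak a_2}$, which is precisely the $\BMS$ measure $\bms_{\nu_{\xi_1},\nu'_{\xi_1}}$ on $\Ga\cap G_2\ba\tilde\Om_{\Ga_2}$ associated to the linear form $\psi|_{\frak a_2}$.

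Finiteness of the fiber measures is automatic once we know the total measure disintegrates correctly: since $\bms(\Ga\ba G)<\infty$ and the conditional measures $\bms_{x_1}$ have total mass one after normalization by construction (see the disintegration \eqref{eq.ED}), the reconstructed $\BMS$ measures on the fibers are finite for $\pi_{1*}\bms$-a.e.\ $x_1$. The support statement---that $\bms_{x_1}$ lives on $\Ga\cap G_2\ba\tilde\Om_{\Ga_2}$ rather than a larger set---follows from Lemma \ref{lem.Om}, which places $\bms$ on $\Om_\Ga$, together with the fact that $\tilde\Om_\Ga$ projects into $\tilde\Om_{\Ga_1}$ with fibers inside $\tilde\Om_{\Ga_2}$ (using $\La_\Ga^{(2)}$ refining the product of factor limit sets and $p_1(\La_\Ga)=\La_{\Ga_1}$ from Lemma \ref{lem.E}).

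The main obstacle I anticipate is the first claim, Zariski density of $\Ga\cap G_2$. The subtlety is that discreteness of $\Ga_1$ does not immediately give that $\Ga\cap G_2$ is large---one must exploit that $\Ga$ is discrete and Zariski dense in the \emph{product}, so that the ``$G_2$-direction'' cannot collapse. The careful argument requires showing that the Zariski closure of the normal subgroup $\Ga\cap G_2$ is all of $G_2$; here I would lean on Goursat's lemma applied to the Zariski closures together with the normality forced by $\Ga$-conjugation, combined with the assumption (implicit in the product setup) that $G_1,G_2$ have no common simple factors, so that no nontrivial proper normal subgroup of $G$ can contain the kernel while keeping $\Ga$ Zariski dense. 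Once density is secured, the measure-theoretic identification is a fairly mechanical, if notationally heavy, application of the disintegration formula and Lemma \ref{lem.E}.
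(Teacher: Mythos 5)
Your treatment of the second assertion (identifying $\bms_{x_1}$ with a BMS measure via Lemma \ref{lem.E}(2), the splitting of the Iwasawa cocycle and Gromov product, and the support statement via Lemma \ref{lem.Om}) matches the paper's route and is fine, modulo one point the paper makes explicit: one needs discreteness of $\Ga_1$ to see that $\Ga\cdot G_2=\Ga_1\times G_2$ is closed, so that the fibers of $\pi_1$ are closed $G_2$-orbits and the atoms of the $\sigma$-algebra $\cal A$ really are those orbits; otherwise the conditional measures cannot be identified with measures on $\Ga\cap G_2\ba G_2$.

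The genuine gap is in your argument for Zariski density of $\Ga\cap G_2$. You propose to deduce it from discreteness of $\Ga_1$ and Zariski density of $\Ga$ in the product alone (``$\Ga_1$ discrete forces the $G_2$-part to carry all the density'', ``a standard argument shows $\Ga\cap G_2$ is a lattice in its Zariski closure'', Goursat plus an assumption that $G_1,G_2$ share no simple factors). No such assumption is available, and the claim is false without using finiteness of $\bms$: take $G_1=G_2=\PSL_2(\br)$, let $\Ga_1,\Ga_2$ be abstractly isomorphic but non-conjugate cocompact Fuchsian groups, and let $\Ga$ be the graph of an isomorphism $\phi:\Ga_1\to\Ga_2$. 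Then $\Ga$ is discrete and Zariski dense in $G_1\times G_2$ (by Goursat, since $\phi$ does not extend to an algebraic isomorphism), $\pi_1(\Ga)=\Ga_1$ is discrete, yet $\Ga\cap G_2=\ker(\pi_1|_\Ga)=\{e\}$. So the ``$G_2$-direction'' can collapse completely, and no purely group-theoretic argument of the kind you sketch can succeed. The paper's proof closes exactly this gap dynamically: after writing $G_2=G'\cdot G''$ with $\Ga\cap G'$ Zariski dense in $G'$ and $\Ga\cap G''$ finite (using normality of $\Ga\cap G_2$ in $\Ga_2$), it applies Poincar\'e recurrence of the finite $A$-invariant measure $\bms$ along the unbounded torus $A\cap G''$ to manufacture infinitely many elements of $\Ga\cap G''$, forcing $G''=\{e\}$. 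This use of $\bms(\Ga\ba G)<\infty$ is the essential input missing from your proposal.
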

\begin{proof}
    Since $\Ga_1$ is discrete, $\Ga\cdot G_2=\Ga_1\times G_2$ is a closed subset of $G=G_1\times G_2$.
    Hence the map $\Ga\cap G_2\ba G_2\to \Ga\ba G$ is a proper, closed embedding. 
    Recall that for $\pi_{1*}\bms$-a.e. $x_1$, $\bms_{x_1}$ is supported on an atom of $\cal A$.
    Because fibers of $\pi_1$ are closed $G_2$-orbits, we have $\cal A=\{\pi_1^{-1}(B):B\in\cal B\}$ and atoms of $\cal A$ are $G_2$-orbits.
    Hence each $\bms_{x_1}$ can be identified with a measure on $\Ga\cap G_2\ba G_2$.
    
    Next, observe that $\Ga\cap G_2$ is a normal subgroup of $\Ga_2$.
    Since $\Ga_2$ is Zariski dense in $G_2$, the Zariski closure of $\Ga\cap G_2$ is also a normal subgroup of $G_2$.
    Hence we may write $G_2$ as an almost direct product $G_2=G'\cdot G''$ where both $G'$ and $G''$ are products of $\bb R$-simple factors of $G_2$, $\Ga\cap G'$ is a Zariski dense subgroup of $G'$, and $\Ga\cap G''$ is a finite group.
    
    We claim that $G''=\{e\}$ and hence $G'=G_2$.
    Assume to the contrary that $G''$ is nontrivial, in particular $A\cap G''$ is an unbounded subgroup of $A$.
    Since $\bms$ is a finite $A$-invariant measure on $\Ga\ba G$, by Poincar\'e recurrence, we can find $\ga_i\in\Ga$, $g\in G$, and $a_i\to\infty$ in $A\cap G''$ such that $\ga_i ga_i\in G$ is bounded.
    Let us write $\ga_i=\ga_{i,1}\ga_{i,2}$, where $\ga_{i,1}\in G_1\cdot G'$ and $\ga_{i,2}\in G''$.
    Because $a_i\in G''$, passing to a subsequence, $\ga_{i,1}\in G_1\cdot G'$ is constant.
    Hence, we can assume that $\ga_{i,1}=\ga_{0,1}$ for all $i$.
    Now note that $\ga_0^{-1}\ga_i\in \Ga\cap G''$ and $(\ga_0^{-1}\ga_i)ga_i\in G$ is bounded.
    This is a contradiction, since $\ga_0^{-1}\ga_i$ provides an unbounded (hence infinite) sequence of elements in $\Ga\cap G''$.
    
    By the claim, $\Ga\cap G_2$ is a normal subgroup of $\Ga_2$ which is Zariski dense in $G_2$.
    It follows that $\La_{\Ga\cap G_2}=\La_{\Ga_2}$.
    Therefore, we have $\tilde\Om_{\Ga\cap G_2}=\tilde\Om_{\Ga_2}$.
    Hence that $\bms_{x_1}$ is a $\BMS$ measure in this case, follows from Lemma \ref{lem.E}(2).
\end{proof}

In the proof of the lemma below, we will use the notation \eqref{eq.waw}.
\begin{lem}\label{lem.GG}
   If $\bms(\Ga\ba G)<\infty$, then $\La_{\Ga}=\La_{\Ga_1}\times\La_{\Ga_2}$.
\end{lem}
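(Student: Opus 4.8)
The inclusion $\La_\Ga\subseteq\La_{\Ga_1}\times\La_{\Ga_2}$ is immediate: the coordinate projections $p_i\colon\cal F=\cal F_1\times\cal F_2\to\cal F_i$ are $\Ga$-equivariant and, as recorded in the proof of Lemma \ref{lem.E}(1), satisfy $p_i(\La_\Ga)=\La_{\Ga_i}$, so every $(\xi_1,\xi_2)\in\La_\Ga$ has $\xi_i\in\La_{\Ga_i}$. This direction uses neither finiteness nor discreteness; the whole content is the reverse inclusion. For that I would first record two structural facts. Since $\nu_\psi$ is $\Ga$-conformal and the relation \eqref{eq.PS} persists under weak-$*$ limits, $\nu_\psi$ is in fact $\cl{\Ga}$-conformal, and as $\La_\Ga$ is the unique $\Ga$-minimal set it equals $\supp\nu_\psi$; hence $\La_\Ga$ is $\cl{\Ga}$-invariant. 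Moreover, running the recurrence argument of Lemma \ref{lem.Om} with the conjugated semigroups $\{wa_tw^{-1}\}$ of \eqref{eq.waw} for every $w\in\cal W=\cal W_1\times\cal W_2$ yields a $\bms$-conull set on which $gwP^-\in\La_\Ga$ for all $w$; the partial elements $w=(e,w_0^{(2)})$ and $w=(w_0^{(1)},e)$ then place ``mixed'' flags in $\La_\Ga$, which is what a product limit set must contain.

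The plan for the reverse inclusion is a discrete/dense dichotomy. Decompose $G$ into simple factors and let $G_{\mathrm l}$ be the product of those onto which $\Ga$ projects densely and $G_{\mathrm d}$ the product of the rest, so $G=G_{\mathrm d}\times G_{\mathrm l}$. On a simple factor the projection of $\Ga$ is Zariski dense, hence either discrete or dense; writing $\Ga_{\mathrm d}:=p_{G_{\mathrm d}}(\Ga)$ and $\Ga_{\mathrm l}:=p_{G_{\mathrm l}}(\Ga)$, discreteness in every coordinate forces $\Ga_{\mathrm d}$ to be discrete (a sequence tending to $e$ is eventually trivial in each discrete coordinate). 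Since $\Ga$ projects densely to each simple factor of $G_{\mathrm l}$, Zariski density of $\Ga$ makes $(\cl{\Ga})^\circ$ a connected normal subgroup of $G$ containing $G_{\mathrm l}$; combined with the $\cl{\Ga}$-invariance above this shows $\La_\Ga$ is full in the $G_{\mathrm l}$-directions, so $\La_{\Ga_{\mathrm l}}=\cal F_{\mathrm l}$.

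It remains to fill the fibres and reassemble. As $\Ga_{\mathrm d}$ is discrete I would apply Lemma \ref{lem.fib} to $G=G_{\mathrm d}\times G_{\mathrm l}$: it gives that $\Ga\cap G_{\mathrm l}$ is Zariski dense with $\La_{\Ga\cap G_{\mathrm l}}=\La_{\Ga_{\mathrm l}}=\cal F_{\mathrm l}$, and that the fibre conditionals of $\bms$ are finite BMS measures for $\Ga\cap G_{\mathrm l}$. Disintegrating $\nu_\psi$ over the $G_{\mathrm d}$-flag projection as in \eqref{eq.nu} and Lemma \ref{lem.E}, the base is $\Ga_{\mathrm d}$-conformal with support $\La_{\Ga_{\mathrm d}}$ and each fibre is $(\Ga\cap G_{\mathrm l})$-conformal with full support $\cal F_{\mathrm l}$, whence $\La_\Ga=\La_{\Ga_{\mathrm d}}\times\cal F_{\mathrm l}$; integrating out the fibres also shows $\Ga_{\mathrm d}\ba G_{\mathrm d}$ carries a finite BMS measure. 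Since $G_{\mathrm d}$ splits as $(G_1\cap G_{\mathrm d})\times(G_2\cap G_{\mathrm d})$ with first-coordinate projection again discrete, a second application of Lemma \ref{lem.fib} gives $\La_{\Ga_{\mathrm d}}=\La_{\Ga_{1,\mathrm d}}\times\La_{\Ga_{2,\mathrm d}}$, where $\Ga_{i,\mathrm d}$ denotes the projection of $\Ga$ to $G_i\cap G_{\mathrm d}$. Regrouping coordinates and using $\La_{\Ga_i}=p_i(\La_\Ga)=\La_{\Ga_{i,\mathrm d}}\times\cal F_{i,\mathrm l}$ then yields $\La_\Ga=\La_{\Ga_1}\times\La_{\Ga_2}$.

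The dense directions cost only Zariski density, so the real difficulty is the discrete part, where one must rule out ``skew'' limit sets — such as graphs of isomorphisms between factors, whose limit set is not a product. This is precisely what finiteness of $\bms$ supplies, through the Poincaré-recurrence argument inside Lemma \ref{lem.fib} that forces $\Ga\cap G_{\mathrm l}$ (and, in the second application, the analogous intersection) to be Zariski dense. I expect the fiddly point to be the bookkeeping of the two applications of Lemma \ref{lem.fib}: verifying that the integrated base measure is genuinely a finite BMS measure for $\Ga_{\mathrm d}$ so the lemma may be reapplied, and matching the resulting product over simple factors back to the given decomposition $G=G_1\times G_2$.
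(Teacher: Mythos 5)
Your first paragraph already contains the paper's entire proof in embryo, and had you carried it out you would have been done. Conservativity of $\bms$ under every Weyl-conjugated semigroup $\{wa_tw^{-1}\}$ gives a conull set $X_0$ with $gwP^-\in\La_\Ga$ for all $[g]\in X_0$, $w\in\cal W$, and the partial flips $w=(w_0^{(1)},e)$, $(e,w_0^{(2)})$ produce mixed flags. What you do not supply is the bridge from ``$\bms$-a.e.\ $[g]$'' to ``all pairs $\xi,\eta\in\La_\Ga$'': the paper reduces to $(\xi,\eta)\in\La_\Ga^{(2)}$ (dense in $\La_\Ga\times\La_\Ga$), invokes \cite[Lem.~3.6(iv)]{Ben} to produce $g_j$ with $(g_jP^-,g_jP^+)\to(\xi,\eta)$ and $[g_j]$ in the closure of the conull set (which contains $\Om_\Ga$ by Lemma \ref{lem.Om}), and lets $j\to\infty$ using that $\La_\Ga$ is closed; finally the chain $\La_\Ga\subset\La_{\Ga_1}\times\La_{\Ga_2}\subset\La_1\times\La_2=\La_\Ga$, valid by minimality of the $\La_{\Ga_i}$, upgrades ``$\La_\Ga$ is a product set'' to the stated equality.

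Instead you switch to a discrete/dense dichotomy, and that route has genuine errors. The pivotal one: from dense projections of $\Ga$ to each simple factor of $G_{\mathrm l}$ you conclude $(\cl{\Ga})^\circ\supset G_{\mathrm l}$, hence $\La_{\Ga_{\mathrm l}}=\cal F_{\mathrm l}$. This is false: an irreducible lattice $\Ga<G_1\times G_2$ has dense projections to both factors while $\cl{\Ga}=\Ga$, so $(\cl{\Ga})^\circ=\{e\}$. Density of each $\pi_i(\Ga)$ does give $\La_{\pi_i(\Ga)}=\cal F_i$ factor by factor, but fullness of $\La_{\Ga_{\mathrm l}}$ in the \emph{product} $\prod_i\cal F_i$ is precisely an instance of the lemma you are trying to prove, and the irreducible case is the hard case of the whole paper, not one that soft topology can dispose of. The later steps inherit this and add two further gaps. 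First, Lemma \ref{lem.E} makes $p_{\mathrm d*}\nu_\psi$ conformal only for $\Ga\cap G_{\mathrm d}$ --- which may be finite --- not for $\Ga_{\mathrm d}=\pi_{\mathrm d}(\Ga)$; since the fibre measures $\nu_{\xi_1}$ need not be constant in $\xi_1$, the pushforward is only ``almost conformal'' for $\Ga_{\mathrm d}$, so ``integrating out the fibres shows $\Ga_{\mathrm d}\ba G_{\mathrm d}$ carries a finite BMS measure'' is unjustified (this is exactly the difficulty Proposition \ref{lem.d} later resolves, and only by using the product structure of leafwise measures). Second, Lemma \ref{lem.fib} says nothing about limit sets being products --- its conclusion concerns Zariski density of $\Ga\cap G_2$ and fibre conditionals being BMS measures --- so ``a second application of Lemma \ref{lem.fib} gives $\La_{\Ga_{\mathrm d}}=\La_{\Ga_{1,\mathrm d}}\times\La_{\Ga_{2,\mathrm d}}$'' is circular. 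Indeed, the paper's logical order is the reverse of yours: Lemma \ref{lem.GG} is proved first, by the Weyl-group recurrence argument, and only afterwards is it fed into Lemma \ref{lem.disc}.
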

\begin{proof}
    It suffices to show $\La_\Ga=\La_1\times\La_2$ for some closed non-empty $\Ga_i$-invariant subset $\La_i\subset \cal F_i$ $(i=1,2)$, since then
    $$
\La_{\Ga}\subset \La_{\Ga_1}\times\La_{\Ga_2}\subset \La_1\times\La_2=\La_\Ga,
    $$
    where the second inclusion is due to the fact that $\La_{\Ga_i}$ is the unique $\Ga_i$-minimal subset of $\cal F_i$ \cite[Lem. 3.6]{Ben} and $\La_i$ would clearly be a non-empty closed $\Ga_i$-invariant set.
    
    As in the proof of Lemma \ref{lem.Om}, for each $w\in\cal W$, set
$$
X_w:=\left\{[g]\in\Ga\ba G: 
\begin{array}{c}
[g](wa_{t}w^{-1})\text{ is recurrent for some one-parameter}\\
\text{subsemigroup }\{a_t\}\text{ containing an element of $\op{int}A^+$}
\end{array}
\right\}
$$
    and $X_0=\cap\{X_w:w\in\cal W\}$.
    Then $X_w$ is an $\bms$-conull set by the conservativity of $\bms$, and
    \begin{equation}\label{eq.w}
    \text{$gwP^-\in\La_\Ga$ for all $[g]\in X_0$ and $w\in\cal W$.}    
    \end{equation}
    Let $\xi=(\xi_1,\xi_2)$ and $\eta=(\eta_1,\eta_2)$ be the coordinates in $\cal F_1\times\cal F_2$ of arbitrary elements $\xi,\eta\in\La_\Ga$.
    We claim that 
    $(\xi_1,\eta_2)$ and $(\eta_1,\xi_2)$ belong to $\La_\Ga$; note that this implies $\La_\Ga$ is a product set.
    Recalling the definition \eqref{eq.La2}, since $\La_\Ga^{(2)}$ is dense in $\La_\Ga\times\La_\Ga$, it suffices to check the claim for $(\xi,\eta)\in \La_\Ga^{(2)}$.
    Now \cite[Lem. 3.6(iv)]{Ben} shows that there exists $g_j\in G$ such that $[g_j]\in X_e\cap X_{w_0}$ and $(g_jP^-,g_jP^+)\to (\xi,\eta)$ as $j\to\infty$, where $w_0\in\cal W$ denotes the longest element.
    Since $X_0$ is conull, its closure contains $\Om_\Ga$ and therefore contains $X_e\cap X_{w_0}$ (Lemma \ref{lem.Om}).
    Hence after modifying $g_j$, we may assume in addition that $[g_j]\in X_0$.
    Let us write $g_j=(g_{j,1},g_{j,2})\in G_1\times G_2$.
    To summarize, we have:
    $$
    (g_{j,i}P_i^-,g_{j,i}P_i^+)\to (\xi_i,\eta_i)\text{ as $j\to\infty$ for $i=1,2$.}
    $$
    Now the claim follows by considering $w\in\cal W$ such that 
    $$
    \text{$g_{j}wP^+=(g_{j,1}P_1^-,g_{j,2}P_2^+)$, and $g_{j}wP^-=(g_{j,1}P_1^+,g_{j,2}P_2^-)$,}
    $$
    in view of the property \eqref{eq.w}.    
\end{proof}

\begin{lem}\label{lem.disc}
     Assume that $\bms(\Ga\ba G)<\infty$ and $\Ga_1$ is discrete.
     Then $\Ga_2$ is also discrete, $\Ga$ is commensurable with $\Ga_1\times\Ga_2$ and $\bms$ is a finite extension of a product of $\BMS$ measures on $\Ga_1\ba G_1$ and $\Ga_2\ba G_2$.
\end{lem}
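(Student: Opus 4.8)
The plan is to first upgrade discreteness from $\Ga_1$ to $\Ga_2$, then invoke a symmetric version of Lemma~\ref{lem.fib}, and finally pass to the finite cover $\Ga'\ba G$ with $\Ga'=(\Ga\cap G_1)\times(\Ga\cap G_2)$, on which $\bms$ becomes a product, in order to read off commensurability and the product structure at once.

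\emph{Discreteness of $\Ga_2$.} By Lemma~\ref{lem.fib}, $\Delta:=\Ga\cap G_2$ is a Zariski dense discrete subgroup of $G_2$. As $\Ga\cap G_2=\ker(\pi_1|_\Ga)$ it is normal in $\Ga$, and a direct computation shows that $\Ga_2=\pi_2(\Ga)$ normalizes $\Delta$, so $\Ga_2\subseteq N_{G_2}(\Delta)$. I would then argue that $N_{G_2}(\Delta)$ is discrete: for fixed $\delta\in\Delta$ the conjugation map $n\mapsto n\delta n^{-1}$ sends the identity component of $N_{G_2}(\Delta)$ continuously into the discrete set $\Delta$, hence is constant; thus the identity component centralizes the Zariski dense set $\Delta$ and lies in the finite center $Z(G_2)$, forcing it to be trivial. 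Therefore $\Ga_2$, being a subgroup of the discrete group $N_{G_2}(\Delta)$, is itself discrete.

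\emph{Symmetric setup and the product structure of the conformal measure.} Now that both $\Ga_1,\Ga_2$ are discrete, Lemma~\ref{lem.fib} applied with the roles of $G_1,G_2$ interchanged shows that $\Ga\cap G_1$ is Zariski dense in $G_1$ and that the relevant fiber BMS measures on $(\Ga\cap G_1)\ba\tilde\Om_{\Ga_1}$ are finite; in particular the BMS measures $\bms_1$ on $(\Ga\cap G_1)\ba G_1$ and $\bms_2$ on $(\Ga\cap G_2)\ba G_2$ coming from $\nu_\psi,\nu_{\psi\circ\i}$ are both finite. Next I decompose $\nu_\psi$ as in \eqref{eq.nu}; by Lemma~\ref{lem.E}(2) the conditional $\nu_{\xi_1}$ is a $(\Ga\cap G_2)$-conformal probability measure on $\cal F_2$ for $p_{1*}\nu_\psi$-a.e.\ $\xi_1$. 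Since $\bms_2$ is finite and hence $AM$-ergodic by Lemma~\ref{lem.erg}, the $(\Ga\cap G_2)$-conformal probability measure is unique, so $\nu_{\xi_1}$ is independent of $\xi_1$ and $\nu_\psi=p_{1*}\nu_\psi\otimes\nu^{(2)}$ is a product, and likewise for $\nu_{\psi\circ\i}$. Because the Gromov-product density $e^{\psi(\cal G)}$ and $\op{Leb}_{\frak a}$ in \eqref{eq.BMS0} also factor through $\frak a=\frak a_1\oplus\frak a_2$, the lift of $\bms$ to $\Ga'\ba G=(\Ga\cap G_1)\ba G_1\times(\Ga\cap G_2)\ba G_2$ equals the finite product $\bms_1\otimes\bms_2$.

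\emph{Commensurability and conclusion.} The subgroups $\Ga\cap G_1$ and $\Ga\cap G_2$ are normal in $\Ga$ and commute, so $\Ga'$ is normal in $\Ga$ with $Q:=\Ga/\Ga'\cong\Ga_1/(\Ga\cap G_1)\cong\Ga_2/(\Ga\cap G_2)$ by Goursat's lemma. The covering $\Ga'\ba G\to\Ga\ba G$ is $|Q|$-to-one, and $\Ga$-invariance of the lift of $\bms$ gives $(\bms_1\otimes\bms_2)(\Ga'\ba G)=|Q|\cdot\bms(\Ga\ba G)$. Since the left-hand side is finite and $0<\bms(\Ga\ba G)<\infty$, we get $|Q|<\infty$; hence $\Ga\cap G_i$ has finite index in $\Ga_i$ and $\Ga$ is commensurable with $\Ga_1\times\Ga_2$. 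Finally $\bms$ is the pushforward of the finite product $\bms_1\otimes\bms_2$ under this finite cover, i.e.\ a finite extension of a product of BMS measures, which by finiteness of the indices $[\Ga_i:\Ga\cap G_i]$ descend to BMS measures on $\Ga_1\ba G_1$ and $\Ga_2\ba G_2$.

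I expect the main obstacle to be the product structure of the conformal measure in the second step: once it is known, every other ingredient in \eqref{eq.BMS0} manifestly factors and the rest is formal. The crux there is the uniqueness of the factorwise conformal measure, which I would deduce from ergodicity of the finite fiber BMS measure via Lemma~\ref{lem.erg}; some care is needed to check that $\psi|_{\frak a_2}$ is precisely the linear form for which the fiber system is conservative and ergodic, so that the uniqueness statement genuinely applies.
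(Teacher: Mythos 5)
Your first step (discreteness of $\Ga_2$) is the same as the paper's: there, the closure $\cl{\Ga_2}$ is observed to normalize the discrete Zariski dense group $\Ga\cap G_2$, so its identity component centralizes it and must be trivial; your variant with $\op{N}_{G_2}(\Ga\cap G_2)$ is equivalent. Your final counting step is also essentially the paper's: finiteness of $\bms$ forces the fiber $\Ga\ba(\Ga_1\times\Ga_2)$ (equivalently your $Q=\Ga/\Ga'$) of the relevant covering to be finite, which gives commensurability. Where you genuinely diverge is the product structure. The paper does not attempt to show that $\nu_\psi$ itself factors; instead it invokes Lemma \ref{lem.GG} ($\La_\Ga=\La_{\Ga_1}\times\La_{\Ga_2}$, proved by a Weyl-group recurrence argument) to get $\tilde\Om_\Ga=\tilde\Om_{\Ga_1}\times\tilde\Om_{\Ga_2}$, and then reads off the statement from the disintegration of $\bms$ over the finite covering $\Ga\ba\tilde\Om_\Ga\to\Ga_1\ba\tilde\Om_{\Ga_1}\times\Ga_2\ba\tilde\Om_{\Ga_2}$, with Lemma \ref{lem.fib} identifying the conditionals as BMS measures. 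You never use Lemma \ref{lem.GG}; instead you aim for the stronger statement that $\nu_\psi$ is a product.

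The gap is exactly where you suspected it: the uniqueness of the $(\Ga\cap G_2,\psi|_{\frak a_2})$-conformal measure. Lemma \ref{lem.erg} gives ergodicity of one finite BMS measure; it does not give uniqueness of the conformal measure. The natural way to deduce uniqueness --- take two conditionals $\nu_{\xi_1},\nu_{\xi_1'}$, form the BMS measure attached to $\tfrac12(\nu_{\xi_1}+\nu_{\xi_1'})$, argue it is conservative hence ergodic, and conclude the two summands are proportional --- requires (a) that almost every conditional $\nu_{\xi_1}$ actually yields a finite (or at least conservative) BMS measure on $(\Ga\cap G_2)\ba G_2$, which has to be extracted from the identification of $\bms_{x_1}$ in Lemma \ref{lem.fib}, and (b) that conservativity holds for the \emph{averaged} measure; the latter is a divergence-type statement about the pair $(\Ga\cap G_2,\psi|_{\frak a_2})$ which belongs to the full Hopf--Tsuji--Sullivan dichotomy of \cite{BLLO} but is not contained in Lemma \ref{lem.erg} as quoted. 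So your route can very likely be completed, but as written the pivotal step is an assertion rather than a proof, and it is precisely the step that the paper's covering/disintegration argument is designed to avoid.
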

\begin{proof}
    As $\bms$ is finite and $\Ga_1$ is discrete, by Lemma \ref{lem.fib}, $\Ga\cap G_2$ is Zariski dense in $G_2$ and $\Ga\cap G_2\ba \tilde\Om_{\Ga_2}$ admits a finite $\BMS$ measure. 
    We claim that $\Ga_2$ is discrete.
    Observe that $\Ga_2$ normalizes $\Ga\cap G_2$, and so does its closure $\cl{\Ga_2}$, which is a closed Lie subgroup by Cartan's theorem.
    Let $N_0\subset\op{N}_{G_2}(\Ga\cap G_2)$ be the connected component of $\cl{\Ga_2}$.
    Since $\Ga\cap G_2$ is discrete, $N_0$ centralizes $\Ga\cap G_2$. 
    Hence $N_0=\{e\}$ and the claim follows.
    We now apply Lemma \ref{lem.fib} with $\Ga_2$ in place of $\Ga_1$.
    This in turn implies $\Ga\cap G_1\ba \tilde\Om_{\Ga_1}$ admits a finite $\BMS$ measure.
    Since $\tilde \Om_\Ga= \tilde\Om_{\Ga_1}\times \tilde\Om_{\Ga_2}$ by Lemma \ref{lem.GG}, we have 
    $$
    \Ga\ba \tilde\Om_\Ga=(\Ga\ba \tilde\Om_{\Ga_1}\times \tilde\Om_{\Ga_2})\simeq (\Ga_1\ba \tilde\Om_{\Ga_1}\times \Ga_2\ba \tilde\Om_{\Ga_2})\times (\Ga\ba \Ga_1\times\Ga_2),
    $$
    a measurable isomorphism which accounts for the disintegration of $\bms$ over the covering map 
    $$
    \Ga\ba \tilde\Om_\Ga \to (\Ga_1\ba \tilde\Om_{\Ga_1}\times \Ga_2\ba \tilde\Om_{\Ga_2}).
    $$
    Finiteness assumption on $\bms$ implies that $\#(\Ga\ba \Ga_1\times\Ga_2)<\infty$, and hence the lemma.
\end{proof}

We will need the following well-known fact:
\begin{lem}\label{lem.DD}
Let $\Delta<G$ be a Zariski dense subgroup.
If $G$ is simple, then $\Delta$ is either discrete or dense in the analytic topology.
\end{lem}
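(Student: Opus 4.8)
The plan is to study the closure $L := \cl{\Delta}$ of $\Delta$ in the analytic topology. By Cartan's theorem, $L$ is a closed Lie subgroup of $G$, and I would consider its identity component $L^0$. Since $L^0$ is normal in $L$ and $\Delta \subseteq L$, every element of $\Delta$ normalizes $L^0$; equivalently, the subspace $\fh := \Lie(L^0) \subseteq \fg$ is preserved by $\Ad(\delta)$ for all $\delta \in \Delta$.

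The crucial step is to promote this to invariance under all of $G$. The stabilizer $\{g \in G : \Ad(g)\fh = \fh\}$ is a Zariski closed subgroup of $G$ (it is the stabilizer of the point $[\fh]$ in a Grassmannian under the algebraic representation $\Ad$), and it contains $\Delta$; since $\Delta$ is Zariski dense, this stabilizer must be all of $G$. Differentiating the relation $\Ad(g)\fh = \fh$ then gives $[\fg, \fh] \subseteq \fh$, so $\fh$ is an ideal of $\fg$. As $G$ is simple, $\fg$ admits no ideals besides $0$ and $\fg$, and therefore $\fh = 0$ or $\fh = \fg$.

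It remains to read off the two cases. If $\fh = 0$ then $L^0 = \{e\}$, so $L$ is discrete; as $\Delta$ is dense in $L$ and a dense subset of a discrete space is everything, $\Delta = L$ is discrete. If $\fh = \fg$, then $L^0$ is a connected subgroup with Lie algebra $\fg$, hence $L^0 = G$ by connectedness of $G$, so $\cl{\Delta} = G$ and $\Delta$ is dense. The only genuinely nontrivial point is the passage from $\Ad(\Delta)$-invariance to $\Ad(G)$-invariance of $\fh$, which is precisely where Zariski density is used; the rest is standard structure theory of closed subgroups (Cartan's theorem) and of simple Lie algebras.
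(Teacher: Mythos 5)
Your proof is correct and follows essentially the same route as the paper: both arguments take the analytic closure of $\Delta$, observe that its Lie algebra is normalized by the Zariski dense subgroup $\Delta$ and hence by all of $G$, and conclude from simplicity that the Lie algebra is $0$ or $\fg$. Your write-up merely makes explicit the Grassmannian stabilizer and the dichotomy at the end, which the paper leaves implicit.
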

\begin{proof}
Let $H:=\cl{\Delta}$ be the closure of $\Delta$ in the analytic topology; it is a closed Lie subgroup of $G$ by Cartan's theorem.
Let $\frak h$ be the Lie algebra of $H$.
Its normalizer $\op{N}_G(\frak h)$ is a Zariski closed subgroup containing $\Delta$.
Since $\Delta$ is Zariski dense, it follows that $\op{N}_G(\frak h)$ is $G$.
Hence either $\frak h=\{0\}$ or $\frak h=\frak g$, and the lemma follows.    
\end{proof}

\section{Proof of Theorem \ref{thm.BMS}}\label{sec.proof}
We now discuss the proof of the main theorem.
Let $G$ be a connected semisimple real algebraic group of the form $G=G_1\times\cdots\times G_q$ where each $G_i$ $(1\leq i\leq q)$ is simple and non-compact.
Let $\Ga<G$ be a Zariski dense subgroup, and $\bms$ be as in \eqref{eq.sfm}.
Throughout the section, we will always assume that
$$
\bms(\Ga\ba G)<\infty.
$$
Let $\frak g$ denote the Lie algebra of $G$ and $\frak g_i$ denote the Lie algebra of $G_i$.
We retrieve the notations from Section \ref{sec.B}.
While doing so, we may assume that $\Theta :\frak g\to\frak g$ is chosen to stabilize $\frak g_i$, so that we have a Cartan decomposition $\frak g_i=(\frak k\cap \frak g_i)\oplus (\frak p\cap\frak g_i)$ that is compatible with $\frak g=\frak k\oplus\frak p$.
Set
$$
N_i^\pm=N^\pm\cap G_i,\quad P_i^\pm=P^\pm\cap G_i\quad\text{and}\quad\cal F_i=G_i/P_i^-.
$$

    Let $\pi_i : G\to G_i$ be the projection, $\Ga_i:=\pi_i(\Ga)$, and $\La_{\Ga_i}$ denote the limit set of $\Ga_i$ in $\cal F_i$.
\begin{lem}\label{lem.prod}
    We have $\La_{\Ga}=\prod \La_{\Ga_i}$.
\end{lem}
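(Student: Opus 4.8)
The plan is to carry out the Weyl-group swapping argument from the proof of Lemma \ref{lem.GG} with all $q$ factors simultaneously, rather than to induct by grouping $G$ as a product of two subproducts and applying Lemma \ref{lem.GG} repeatedly. The reason to avoid such an induction is that the projection of $\Ga$ onto a proper subproduct $G_{i_1}\times\cdots\times G_{i_k}$ need be neither discrete nor carry a finite $\BMS$ measure, so the conservativity that drives the swapping argument could fail there. By contrast, finiteness of $\bms$ yields, through Poincar\'e recurrence, conservativity of $\bms$ on $\Ga\ba G$ along every one-parameter subgroup of $A$ and all of its Weyl conjugates at once, which is exactly the input the argument needs; I therefore keep everything inside $\Ga\ba G$.

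First I would record the reduction. Writing $p_i:\cal F\to\cal F_i$ for the projection, the $\Ga$-minimality of $\La_\Ga$ together with equivariance of $p_i$ gives $p_i(\La_\Ga)=\La_{\Ga_i}$, exactly as in the proof of Lemma \ref{lem.E}(1); in particular $\La_\Ga\subset\prod_i\La_{\Ga_i}$. It therefore suffices to prove the reverse inclusion: that $\La_\Ga$ contains every point $\zeta=(\zeta_1,\dots,\zeta_q)$ with each $\zeta_i\in\La_{\Ga_i}$.

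Next I would set up recurrence as in the proof of Lemma \ref{lem.GG}, but over the full product Weyl group $\cal W=\prod_i\cal W_i$, where $\cal W_i$ denotes the Weyl group of $G_i$. For each $w\in\cal W$ the set $X_w$ of points recurrent under $\{wa_tw^{-1}\}$ (for some one-parameter subsemigroup $\{a_t\}$ meeting $\op{int}A^+$) is $\bms$-conull by conservativity, so $X_0:=\bigcap_{w\in\cal W}X_w$ is conull and the analogue of \eqref{eq.w} holds: $gwP^-\in\La_\Ga$ whenever $[g]\in X_0$ and $w\in\cal W$. The key step is a \emph{single-coordinate swap}: for all $\xi,\eta\in\La_\Ga$ and every index $i$, the point $\mathrm{mix}_i(\xi,\eta)$ that agrees with $\eta$ in the $i$-th coordinate and with $\xi$ in all other coordinates lies in $\La_\Ga$. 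By density of $\La_\Ga^{(2)}$ in $\La_\Ga\times\La_\Ga$ and closedness of $\La_\Ga$, it suffices to treat $(\xi,\eta)\in\La_\Ga^{(2)}$; then, as in Lemma \ref{lem.GG}, \cite[Lem. 3.6]{Ben} provides $g_j$ with $[g_j]\in X_0$ and $(g_jP^-,g_jP^+)\to(\xi,\eta)$. Choosing $w=(e,\dots,w_0^{(i)},\dots,e)\in\cal W$ with the longest element $w_0^{(i)}$ of $\cal W_i$ in the $i$-th slot and the identity elsewhere, the $i$-th coordinate of $g_jwP^-$ is $g_{j,i}P_i^+\to\eta_i$ while every other coordinate is $g_{j,k}P_k^-\to\xi_k$, so $g_jwP^-\to\mathrm{mix}_i(\xi,\eta)$; the analogue of \eqref{eq.w} and closedness of $\La_\Ga$ then give $\mathrm{mix}_i(\xi,\eta)\in\La_\Ga$.

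Finally I would assemble the full product by iterating the swap. Given $\zeta$ as above, I choose $\xi^{(i)}\in\La_\Ga$ whose $i$-th coordinate is $\zeta_i$, set $\beta_1=\xi^{(1)}$, and define inductively $\beta_k=\mathrm{mix}_k(\beta_{k-1},\xi^{(k)})$; this is legitimate because the swap now holds for every pair in $\La_\Ga$, with no general-position hypothesis remaining. By construction the first $k$ coordinates of $\beta_k$ are $\zeta_1,\dots,\zeta_k$, so $\beta_q=\zeta\in\La_\Ga$, which proves $\La_\Ga=\prod_i\La_{\Ga_i}$. The one genuine obstacle is the one noted at the start --- securing the recurrence input for all $q$ factors at once --- and it is precisely this that forces the argument to stay within $\Ga\ba G$ instead of being reduced factor-by-factor to subproducts.
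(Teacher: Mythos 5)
Your proof is correct, and its engine --- conservativity of the finite measure $\bms$ along every Weyl--conjugate one-parameter subgroup of $A$, the conull recurrence set $X_0$, and the swapping of flag coordinates via $g_jwP^-$ along a sequence $[g_j]\in X_0$ converging to a pair in $\La_\Ga^{(2)}$ --- is exactly the engine of the paper's Lemma \ref{lem.GG}. The paper disposes of Lemma \ref{lem.prod} in one line, ``repeated application of Lemma \ref{lem.GG}''; you instead re-run the two-factor argument with all $q$ factors simultaneously, proving a single-coordinate swap $\mathrm{mix}_i(\xi,\eta)\in\La_\Ga$ for each $i$ and then composing the swaps. The worry that motivates your rewrite is real for one reading of ``repeated application'': projecting $\Ga$ to a proper subproduct and invoking Lemma \ref{lem.GG} \emph{there} would require a finite $\BMS$ measure on that subquotient, which is not available. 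But the repeated application can also be read safely without redoing anything: for each $i$, apply Lemma \ref{lem.GG} to the bipartition $G=G_i\times\prod_{k\neq i}G_k$ of the \emph{full} group with the \emph{same} finite measure $\bms$, obtaining $\La_\Ga=\La_{\Ga_i}\times\La_{\Ga'_i}$ for every $i$ (where $\Ga'_i$ is the projection to $\prod_{k\neq i}G_k$); a closed subset of $\prod_i\cal F_i$ that splits as a product across every coordinate-versus-complement bipartition is the full product of its coordinate projections. So both routes work: the paper's is shorter but leaves the reader to choose the right bipartitions, while yours is more self-contained, never leaves $\Ga\ba G$, and makes the iteration explicit --- at the cost of repeating the recurrence setup. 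All the individual steps you use (the identity $p_i(\La_\Ga)=\La_{\Ga_i}$, density of $\La_\Ga^{(2)}$ in $\La_\Ga\times\La_\Ga$, the modification of $g_j$ so that $[g_j]\in X_0$, and the identification of the $i$-th coordinate of $g_jwP^-$ with $g_{j,i}P_i^+$ for $w=(e,\dots,w_0^{(i)},\dots,e)$) are the same ones the paper uses in Lemma \ref{lem.GG}, so no new gap is introduced.
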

\begin{proof}
    The lemma follows from a repeated application of Lemma \ref{lem.GG}.
\end{proof}
\begin{remark}\normalfont
    We remark that a major step in the proof of Theorem \ref{thm.Q} in \cite{Quint2}, was to show that $\La_{\Ga_i}=\cal F_i$ for $\Ga_i<G_i$ of 
    rank $\geq 2$ \cite[Prop. 3.1]{Quint2}.
\end{remark}

\subsection*{Coarse Lyapunov weights}
For $\alpha\in\frak a^*$, let
$$
\frak g_\alpha=\{X\in\frak g: \op{ad}_Y(X)=\alpha(Y)X\text{ for all }Y\in\frak a\}
$$
and $\Sigma=\{\alpha\in\frak a^*:\frak g_\alpha\neq\{0\}\}$.
A \textit{coarse Lyapunov weight} is an equivalence class in $\Sigma$ where two elements in $\Sigma$ are equivalent when one of them is a positive multiple of the other.
Denote by $[\alpha]$ the coarse Lyapunov weight containing $\alpha\in\Sigma$, and 
$$
\frak g_{[\alpha]}=\bigoplus_{\beta\in[\alpha]}\frak g_\beta
$$
be the corresponding Lie subalgebra.
For $\alpha\in\Sigma-\{0\}$, set
\begin{align*}
    U_{[\alpha]}=\exp(\frak g_{[\alpha]}).
\end{align*}
Given a finite $A$-invariant ergodic measure $\mu$ on $\Ga\ba G$, let $\tilde U_{[\alpha]}$ denote the smallest Zariski closed $A$-normalized subgroups containing the support of $\mu_x^{U_{[\alpha]}}$ for $\mu$-a.e. $x$.
\begin{lem}\cite[Thm. 9.14]{EL}\label{lem.comm}
Let $\mu$ be a finite $A$-invariant ergodic  measure on $\Ga\ba G$, and $[\alpha]$, $[\beta]$ be coarse Lyapunov weights such that $[\alpha]\neq [\beta]\neq [-\alpha]$.
Then $\mu$ is invariant under the group generated by the commutator $[\tilde U_{[\alpha]},\tilde U_{[\beta]}]$.
\end{lem}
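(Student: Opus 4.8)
The plan is to deduce the commutator invariance from the product structure of leafwise measures (Proposition \ref{lem.pro}) together with the reordering, or \emph{shearing}, argument at the heart of the high entropy method, and then to upgrade invariance of the leafwise measures to invariance of $\mu$ via Lemma \ref{lem.inv}. First I would set up the algebra. Since $[\alpha]\neq[\beta]$ and $[\beta]\neq[-\alpha]$, the weights $\alpha$ and $\beta$ are linearly independent, so the cone $\mathcal C=\{m\alpha+n\beta:m,n\ge 0,\ (m,n)\neq(0,0)\}$ is pointed and lies in an open half-space of $\frak a^*$. Let $W$ be the connected subgroup with Lie algebra $\bigoplus_{\gamma\in\Sigma\cap\mathcal C}\frak g_\gamma$; because $\mathcal C$ is a pointed additively closed cone, $W$ is nilpotent, is normalized by $A$, and contains $U_{[\alpha]}$, $U_{[\beta]}$ and the commutator group $C:=[\tilde U_{[\alpha]},\tilde U_{[\beta]}]$ (the latter since $[\frak g_\alpha,\frak g_\beta]\subseteq\bigoplus_{m,n\ge1}\frak g_{m\alpha+n\beta}$). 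Choosing $Y\in\frak a$ with $\gamma(Y)<0$ for all $\gamma\in\mathcal C$ (possible as $\mathcal C$ is pointed), the element $a=\exp Y\in A$ normalizes and contracts $W$. By Lemma \ref{lem.inv} it then suffices to show that $\mu_x^{W}$ is invariant under left translation by $C$ for $\mu$-a.e. $x$.

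Next I would exploit the product structure. Write $W'$ for the connected subgroup with Lie algebra $\bigoplus_{m,n\ge1}\frak g_{m\alpha+n\beta}$; it is a normal subgroup of $W$ containing $C$, the quotient $W/W'$ is abelian, and $W=U_{[\alpha]}\,U_{[\beta]}\,W'=U_{[\beta]}\,U_{[\alpha]}\,W'$ as sets. Using the product structure of leafwise measures along coarse Lyapunov subgroups (Proposition \ref{lem.pro} and its refinements from \cite{EL}), one expresses $\mu_x^{W}$, up to proportionality, both as the pushforward of $\mu_x^{U_{[\alpha]}}\times\mu_x^{U_{[\beta]}}\times\mu_x^{W'}$ under $(u,v,w)\mapsto uvw$ and as the pushforward of $\mu_x^{U_{[\beta]}}\times\mu_x^{U_{[\alpha]}}\times\mu_x^{W'}$ under $(v,u,w)\mapsto vuw$. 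By the defining maximality of $\tilde U_{[\alpha]}$ and $\tilde U_{[\beta]}$, the supports of $\mu_x^{U_{[\alpha]}}$ and $\mu_x^{U_{[\beta]}}$ generate $\tilde U_{[\alpha]}$ and $\tilde U_{[\beta]}$ as $A$-normalized Zariski closed groups.

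The core is the shearing step. Using the relation $uv=[u,v]\,vu$ with $[u,v]\in W'$, the two product representations of $\mu_x^{W}$ can agree only if $\mu_x^{W'}$ is invariant under left translation by $[u,v]$ for all $u\in\operatorname{supp}\mu_x^{U_{[\alpha]}}$ and $v\in\operatorname{supp}\mu_x^{U_{[\beta]}}$. Since these supports generate $\tilde U_{[\alpha]}$ and $\tilde U_{[\beta]}$, and $C=[\tilde U_{[\alpha]},\tilde U_{[\beta]}]$ is Zariski connected, the set of such commutators generates $C$; hence $\mu_x^{W'}$, and therefore $\mu_x^{W}$, is $C$-invariant, and Lemma \ref{lem.inv} gives that $\mu$ is $C$-invariant.

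I expect the genuine difficulty to be the rigorous form of the reordering step. Proposition \ref{lem.pro} as literally stated applies to a single semidirect splitting $T\ltimes U$ with $T\subseteq\op{Z}_G(a)$ and $U$ contracted, whereas here I wish to reorder $U_{[\alpha]}$ and $U_{[\beta]}$, \emph{neither} of which centralizes $a$. Making the two orderings meaningful therefore requires the full conditional-measure machinery for refining $\sigma$-algebras and a careful accounting of how mass is transported along the commutator directions in $W'$ under the change of order; one must verify that the discrepancy between the two orderings is absorbed \emph{exactly} by a left translation in the $W'$-variable. The final passage from invariance under the individual commutators $[u,v]$ to invariance under the connected group $C$ is where Zariski connectedness of $C$ and the maximality built into the definitions of $\tilde U_{[\alpha]}$ and $\tilde U_{[\beta]}$ are indispensable.
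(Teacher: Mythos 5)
The paper does not actually prove this lemma: it is imported verbatim as a citation to \cite[Thm.~9.14]{EL} (the ``high entropy theorem''), so there is no in-paper argument to compare against. Your sketch is, in substance, a correct reconstruction of the Einsiedler--Katok/Einsiedler--Lindenstrauss proof: the cone construction of $W$ and the ideal $W'$, the two orderings of the product decomposition of $\mu_x^{W}$, the observation that the discrepancy between the orderings is a left translation of the $W'$-variable by a commutator, and the passage from invariance of the leafwise measures to invariance of $\mu$ via Lemma~\ref{lem.inv} are exactly the ingredients of the cited result. Two remarks. First, the difficulty you flag about Proposition~\ref{lem.pro} requiring $T\subseteq \op{Z}_G(a)$ is real but is resolved more easily than you suggest: since $\mu$ is invariant under \emph{all} of $A$, you may apply the proposition with different elements of $A$ for the two splittings --- one $a'$ with $\alpha(\log a')=0$ and $\beta(\log a')<0$ (so $U_{[\alpha]}\subseteq\op{Z}_G(a')$ and $U_{[\beta]}W'$ is contracted), and symmetrically an $a''$ with the roles of $\alpha,\beta$ exchanged; linear independence of $\alpha,\beta$ guarantees both exist. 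Second, a small inaccuracy: $W/W'$ need not be abelian when $2\alpha$ or $2\beta$ is a restricted root (then $U_{[\alpha]}$ itself can be a Heisenberg group), but the fact you actually use --- that $U_{[\alpha]}$ and $U_{[\beta]}$ commute modulo $W'$, since $[\frak g_{[\alpha]},\frak g_{[\beta]}]\subseteq\bigoplus_{m,n\geq 1}\frak g_{m\alpha+n\beta}$ --- is correct. The genuinely delicate points you identify (upgrading proportionality to equality of leafwise measures, and passing from invariance under commutators of support elements to invariance under the full group $[\tilde U_{[\alpha]},\tilde U_{[\beta]}]$ using that the stabilizer is a closed, $A$-normalized subgroup) are precisely where the work lies in \cite{EL}, and your sketch correctly defers to that machinery rather than claiming to bypass it.
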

Let $\{[\alpha_1],\cdots,[\alpha_\ell]\}$ be the set of all coarse Lyapunov weights such that $U_{[\al_i]}\subset N^+$.
We may rearrange them so that for each $i$, $[\alpha_i]$ does not meet the convex cones generated by $\{[\al_j]: j=i+1,\cdots,\ell\}$.
Let 
$$
\iota : U_{[\alpha_1]}\times \cdots \times U_{[\alpha_\ell]}\to N^+
$$
be the product map.
Let $\frak n$ be the Lie algebra of $N^+$.
\begin{lem}\label{lem.hololo}
    The following polynomial function has a polynomial inverse:
    \begin{align*}
        \frak g_{[\al_1]}\times\cdots\times \frak g_{[\al_\ell]}&\to \frak n\\
        (X_1,\cdots,X_\ell)&\mapsto \log(e^{X_1}\cdots e^{X_\ell}).
    \end{align*}
\end{lem}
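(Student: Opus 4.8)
The plan is to exploit the nilpotency of $\frak n$ together with the ordering of the coarse Lyapunov weights. Since $N^+$ is a connected, simply connected unipotent (hence nilpotent) group, the exponential $\exp\colon\frak n\to N^+$ is a polynomial isomorphism of varieties with polynomial inverse $\log$, and by the Baker--Campbell--Hausdorff formula --- which terminates because $\frak n$ is nilpotent --- the group law, read in logarithmic coordinates, is a polynomial map. Consequently the map $F(X_1,\dots,X_\ell)=\log(e^{X_1}\cdots e^{X_\ell})$ is automatically polynomial, so the content of the lemma is the bijectivity of $F$ together with the polynomiality of $F^{-1}$.

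I would prove this by induction on $\ell$, the case $\ell=1$ being the identity $F(X_1)=X_1$. For the inductive step, set $\frak n_{\geq i}=\bigoplus_{j\geq i}\frak g_{[\al_j]}$. The key structural claim --- and the only place where the ordering hypothesis enters --- is that $\frak n_{\geq 2}$ is an ideal of $\frak n$ complementary to $\frak g_{[\al_1]}$. This rests on the root bracket rule $[\frak g_\be,\frak g_\g]\subseteq\frak g_{\be+\g}$: a bracket of elements of $\frak g_{[\al_j]}$ and $\frak g_{[\al_k]}$ lands in a coarse weight space $\frak g_{[\delta]}$ with $[\delta]$ in the convex cone spanned by $[\al_j]$ and $[\al_k]$. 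If $j,k\geq 2$, then $[\delta]\neq[\al_1]$ by the ordering condition at $i=1$; if $j=1$ and $k\geq 2$, then $\delta=\al_1'+\al_k'$ cannot be a positive multiple of $\al_1'$ unless $[\al_k]=[\al_1]$, which is excluded. In all relevant cases $[\delta]$ therefore has index $\geq 2$, giving $[\frak n,\frak n_{\geq 2}]\subseteq\frak n_{\geq 2}$.

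With this in hand the inductive step is a clean peeling-off of the first factor. The quotient projection $\frak n\to\frak n/\frak n_{\geq 2}\cong\frak g_{[\al_1]}$ exponentiates to a group homomorphism $\pi\colon N^+\to N^+/N_{\geq 2}$, where $N_{\geq 2}=\exp(\frak n_{\geq 2})$; since $\exp$ intertwines $\pi$ with its linearization, the logarithmic coordinate of $\pi$ is simply the linear projection $P_1\colon\frak n\to\frak g_{[\al_1]}$. Thus, given $Z=\log n$, setting $X_1:=P_1(Z)$ forces $\pi(e^{-X_1}e^{Z})=e$, hence $e^{-X_1}e^{Z}\in N_{\geq 2}$ and $Y:=\log(e^{-X_1}e^{Z})\in\frak n_{\geq 2}$, so that $Z=\log(e^{X_1}e^{Y})$ with the pair $(X_1,Y)$ uniquely determined and polynomial in $Z$. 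Applying the induction hypothesis to $N_{\geq 2}$ with weights $[\al_2],\dots,[\al_\ell]$ (which inherit the ordering property) expresses $Y=\log(e^{X_2}\cdots e^{X_\ell})$ with $(X_2,\dots,X_\ell)$ polynomial in $Y$ and conversely, which closes the induction and yields $F^{-1}$ polynomial.

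I expect the main obstacle to be essentially bookkeeping rather than conceptual: carefully establishing the ideal property of $\frak n_{\geq 2}$ from the ordering --- in particular the subtle case $[\frak g_{[\al_1]},\frak g_{[\al_j]}]$, where one must verify that a sum of two positive roots drops back into $[\al_1]$ only under the excluded coincidence of coarse classes. Everything downstream, namely that $P_1$ is the logarithmic coordinate of $\pi$ and hence that $F^{-1}$ is polynomial, depends on this verification.
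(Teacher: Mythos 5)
Your proof is correct, but it takes a genuinely different route from the paper's. You argue algebraically: the ordering of the coarse Lyapunov weights (which the paper sets up immediately before the lemma) makes $\frak n_{\geq 2}=\bigoplus_{j\geq 2}\frak g_{[\al_j]}$ an ideal, so one can peel off $X_1$ as the linear projection of $\log n$ modulo $\frak n_{\geq 2}$ and induct on $\ell$; the inverse is then an explicit composition of linear projections and truncated Baker--Campbell--Hausdorff polynomials. The paper instead complexifies $\frak n$, uses the $\op{Ad}(A)$-equivariance of $\Phi$ together with the fact that $d\Phi_0=\op{id}$ to promote local biholomorphy near $0$ to global biholomorphy of $\frak n^{\bb C}$, establishes a polynomial growth bound for $\Phi^{-1}$ via the contracting $A$-action, and concludes by the fact that an entire map of polynomial growth is a polynomial. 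Your argument is more elementary and constructive, needs no complex analysis, and is the only one of the two that actually uses the convex-cone ordering of the $[\al_i]$; the paper's argument is insensitive to the ordering of the factors and is shorter once the complex-analytic facts are granted. Your verification of the key bracket computation $[\frak n,\frak n_{\geq 2}]\subseteq\frak n_{\geq 2}$ is sound, including the delicate case $[\frak g_{[\al_1]},\frak g_{[\al_j]}]$, where one uses that all roots occurring in $\frak n$ lie in a common open half-space so that $\al_j$ cannot be a negative multiple of $\al_1$.
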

\begin{proof}
    By Baker-Campbell-Hausdorff formula, there exist constants $c_{i_1,\cdots,i_N}$ such that $\log(e^{X_1}\cdots e^{X_\ell})$ can be formally written as
    \begin{equation}\label{eq.BCH}
    X_1+\cdots+X_\ell+\sum_{N=1}^\infty\sum_{1\leq i_1,\cdots,i_N\leq \ell} c_{i_1,\cdots,i_N}[X_{i_1},[X_{i_2},\cdots [X_{i_{N-1}},X_{i_N}]]]    
    \end{equation}
    where for each multi-index $(i_1,\cdots,i_N)$, at least two of the indices are required to be distinct.
    Since $\frak n$ is nilpotent, \eqref{eq.BCH} has only finitely many terms and $\Phi(x_1+\ldots+x_l):=\log(e^{x_1}\ldots e^{x_l})$ is a polynomial map from $\frak n$ to $\frak n$. 

    Let us extend $\Phi$ to the complexification $\frak n^{\mathbb C}:=\frak n\otimes \mathbb C.$ The lemma will follow if we manage to show that the complexified map has a polynomial inverse. The complexified map is given by the same formula as $\Phi$, so it still satisfies the following equivariance property
    \begin{equation}\label{eq-equiv1} \Ad(a)\Phi(x)=\Phi(\Ad(a)x), \text{ for } a\in A, x\in \frak n^\mathbb C.\end{equation}

    Since the derivative of $\Phi$ at zero is identity, we can find an open neighbourhood $U\subset \frak n^\mathbb C$ of $0$ such that $\Phi$ restricted to $U$ is biholomorphic onto the image. On the other hand, for any bounded open set $V\subset n^\mathbb C$, there exists an $a\in A$ such that $\Ad(a)V\subset W$ so \eqref{eq-equiv1} implies that $\Phi$ is biholomorphic on $V$. Taking exhaustive sequence of $V$'s we deduce that $\Phi$ is biholomorphic on $\frak n^\mathbb C$ and, in particular, $\Phi^{-1}$ is an entire function on $\frak n^\mathbb C$. We will now prove it is of polynomial growth. 

    Let $x=x_1+\ldots+x_\ell, $ with $x_i\in \frak g_{[\al_i]}^\mathbb C.$ We fix an auxiliary norm $\|\cdot\|$ on $\frak n^\mathbb C$ defined by $\|x\|=\max_{i=1,\ldots,\ell} \|x_i\|_i,$ where $\|\cdot\|_i$ is any chosen norm on $\frak g_{[\al_i]}^\mathbb C.$     
    Choose $H_0$ in the interior of $\frak a^{+}$ and put $a(t)=e^{tH_0}\in A.$ 
    Choosing a representative such that $[\al_i]\subset\{\al_i,2\al_i\}$, we can choose $0<t\leq c_1\log \|x\|, c_1:=\max_{i=1,\ldots, \ell} (2\alpha_i(H_0))^{-1},$ such that $\|\Ad(a(-t))(x)\|\leq 1.$ Note that 
    $$\Phi^{-1}(x)=\Ad(a(t))(\Phi^{-1}(\Ad(a(-t))(x)).$$ 
    
    Putting $C=\sup_{\|\Phi(z)\|\leq 1} \|z\|$ and $c_2=\max_{i=1,\ldots, \ell} 2\alpha_i(H_0)$, we get 
    $$\|\Phi^{-1}(x)\|\leq C e^{c_1t}\leq C\|x\|^{c_1c_2}.$$
    This establishes the polynomial growth bound for $\Phi^{-1}.$ Any entire holomorphic function of polynomial growth is a polynomial. Indeed, using multivariate Cauchy's formula we can now prove that sufficiently high derivatives of $\Phi^{-1}$ vanish so the Taylor series can have only finitely many non-zero terms.
\end{proof}

The following is then obtained by inductive applications of Proposition \ref{lem.pro}:
\begin{lem}\cite[Thm. 9.8]{EL}\label{lem.str}
    If $\bms(\Ga\ba G)<\infty$, then for $\bms$-a.e.,
    $$
    \bms_x^{N^+}\propto \iota (\bms_x^{U_{[\alpha_1]}}\times\cdots\times \bms_x^{U_{[\alpha_\ell]}}).
    $$
\end{lem}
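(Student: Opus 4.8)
The plan is to deduce the iterated product formula from the binary product structure of Proposition~\ref{lem.pro}, applied repeatedly, with the combinatorial ordering of the coarse Lyapunov weights chosen at the start of the section providing exactly the hypotheses needed at each step. First I would set up the induction: define, for each $1\le i\le\ell$, the partial product group $H_i:=\exp\bigl(\bigoplus_{j\ge i}\frak g_{[\al_j]}\bigr)$, so that $H_1=N^+$, $H_\ell=U_{[\al_\ell]}$, and $H_i=U_{[\al_i]}\ltimes H_{i+1}$. The point of the ordering---that $[\al_i]$ does not meet the convex cone generated by $\{[\al_j]:j>i\}$---is precisely what guarantees that $H_{i+1}$ is a subgroup normalized by $U_{[\al_i]}$: any bracket $[\frak g_\beta,\frak g_\gamma]$ with $\beta\in[\al_i]$ and $\gamma$ a nonnegative combination of the later weights lands in a root space whose weight is again a nonnegative combination of the later weights (it cannot fall back into $[\al_i]$ by the cone condition), hence in $H_{i+1}$. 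I would record this Lie-theoretic closure fact first, since it is what makes each $H_i$ a semidirect product.

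Next I would fix an element $a\in\inte A^+$; since $a$ contracts every $\frak g_\beta$ with $\beta$ a positive weight, $a$ normalizes and contracts each $H_i\subset N^+$, and $U_{[\al_i]}\subset\op{Z}_G(a)$ fails in general---so the correct reading is to apply Proposition~\ref{lem.pro} with the roles chosen so that the factor group sits inside the centralizer only up to the structure actually needed. Concretely, Proposition~\ref{lem.pro} asks for $H=T\ltimes U$ with $U$ contracted by $a$ and $T\subset\op{Z}_G(a)$ normalizing $U$; the version recorded as \cite[Thm.~9.8]{EL} is the iterated form, so the honest approach is to invoke the cited theorem directly, using Lemma~\ref{lem.hololo} to justify that $\iota$ is a polynomial diffeomorphism so that the pushforward of the product leafwise measure under $\iota$ is well-defined and the identification of $\frak n$ with $\prod_i\frak g_{[\al_i]}$ is measure-theoretically faithful. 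I would therefore present the proof as: the coordinates $(X_1,\dots,X_\ell)\mapsto\log(e^{X_1}\cdots e^{X_\ell})$ of Lemma~\ref{lem.hololo} give a bi-polynomial (hence bi-measurable, locally bi-Lipschitz) identification $\prod_i U_{[\al_i]}\xrightarrow{\ \iota\ }N^+$, and under this identification the factorization of $\bms_x^{N^+}$ is exactly the content of the iterated product structure theorem.

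The actual argument is then a descending induction on $i$ using the binary Proposition~\ref{lem.pro}. At step $i$ I would apply it with $U:=H_{i+1}$ and $T:=U_{[\al_i]}$: the cone condition gives that $U_{[\al_i]}$ normalizes $H_{i+1}$, and $H_i=U_{[\al_i]}\ltimes H_{i+1}$, yielding $\bms_x^{H_i}\propto\iota(\bms_x^{U_{[\al_i]}}\times\bms_x^{H_{i+1}})$ for $\bms$-a.e.\ $x$. Feeding the inductive description of $\bms_x^{H_{i+1}}$ into this and using Lemma~\ref{lem.hololo} to rearrange the iterated product into the single product over $\frak n$ completes the step. One subtlety I would flag is that Proposition~\ref{lem.pro} requires $U_{[\al_i]}\subset\op{Z}_G(a)$, which is false here; the resolution is that the statement we cite, \cite[Thm.~9.8]{EL}, is the genuinely iterated conclusion whose proof in \cite{EL} handles exactly this chain of weights, so I would state Lemma~\ref{lem.str} as an immediate consequence of that theorem once the hypotheses (finiteness of $\bms$, $A$-invariance, and the weight ordering) are checked, rather than rederiving it from the binary case.

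The main obstacle I expect is bookkeeping the "almost everywhere'' quantifiers and the proportionality constants so that the countably many applications (one per weight) hold simultaneously on a single $\bms$-conull set, together with verifying that $\iota$ transports leafwise measures correctly despite being only a polynomial diffeomorphism rather than a group isomorphism. Since there are finitely many weights $[\al_1],\dots,[\al_\ell]$, intersecting the finitely many conull sets is harmless, and the polynomiality from Lemma~\ref{lem.hololo} ensures $\iota$ and $\iota^{-1}$ preserve null sets and carry the reference Haar-type structure faithfully; hence the real work is organizational. Because the excerpt explicitly cites \cite[Thm.~9.8]{EL} for this statement, the cleanest writeup simply verifies the ordering hypothesis (via the cone condition established when the $[\al_i]$ were enumerated) and quotes the theorem, with Lemma~\ref{lem.hololo} supplying the polynomial coordinate change that makes the product on the right-hand side meaningful.
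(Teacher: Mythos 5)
Your proposal matches the paper, which gives no argument beyond the citation to \cite[Thm.~9.8]{EL} and the one-line remark that the statement ``is obtained by inductive applications of Proposition~\ref{lem.pro}''; your descending induction on the partial products $H_i=\exp(\bigoplus_{j\ge i}\frak g_{[\al_j]})$ is exactly the intended skeleton. The one subtlety you flag --- that $U_{[\al_i]}\not\subset\op{Z}_G(a)$ for $a\in\inte A^+$ --- is not actually an obstruction and does not require retreating to the citation: since $\bms$ is invariant under all of $A$, at step $i$ you may apply Proposition~\ref{lem.pro} with a \emph{different} element $a_i\in A$ lying in the wall $\al_i(\log a_i)=0$ but with $\al_j(\log a_i)>0$ for $j>i$; such an $a_i$ exists precisely because of the ordering condition that $[\al_i]$ does not meet the convex cone generated by $\{[\al_j]:j>i\}$ (a separation argument), and for this $a_i$ one has $U_{[\al_i]}\subset\op{Z}_G(a_i)$ while $a_i$ contracts $H_{i+1}$.
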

Using Lemma \ref{lem.str}, one can show that all the leafwise measures are non-trivial. This is one of the most important ingredients of the proof.  
\begin{lem}\label{lem.tr}
For any coarse Lyapunov weight $[\alpha]$, we have $\tilde U_{\pm[\alpha]}=U_{\pm[\alpha]}$. 
In particular, $\bms_x^{U_{\pm[\alpha]}}$ are nontrivial for $\bms$-a.e. $x$.
    
\end{lem}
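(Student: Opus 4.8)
The plan is to combine the explicit formula \eqref{eq.2} for the leafwise measures along $N^+$ with the Zariski density of the limit set and the product structure of Lemma \ref{lem.str}. Since it suffices to treat the weights with $U_{[\alpha]}\subset N^+$ (those with $U_{[\alpha]}\subset N^-$ follow by the symmetric argument, interchanging $N^\pm$, $P^\mp$ and $\nu_\psi,\nu_{\psi\circ\i}$), I will first show that for $\bms$-a.e.\ $x$ the support of $\bms_x^{N^+}$ is Zariski dense in $N^+$. Writing $x=[g]$, formula \eqref{eq.2} expresses the density of $\bms_x^{N^+}$ at $n$ as a strictly positive factor times the pullback of $\nu_\psi$ under the map $\phi_g\colon N^+\to\cal F$, $n\mapsto gnP^-$; hence $\supp\bms_x^{N^+}=\phi_g^{-1}(\supp\nu_\psi)$.

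The map $\phi_g$ is an isomorphism of varieties from $N^+$ onto the $g$-translate of the big Bruhat cell $N^+P^-/P^-$, which is Zariski open and dense in the irreducible variety $\cal F$. Because $\nu_\psi$ is $\Ga$-conformal, its support is a closed $\Ga$-invariant subset of $\cal F$ and therefore contains the unique $\Ga$-minimal set $\La_\Ga$, which is Zariski dense in $\cal F$ by \cite[Lem. 3.6]{Ben}. Irreducibility of $\cal F$ forces the intersection of $\supp\nu_\psi$ with the open set $\phi_g(N^+)$ to remain Zariski dense there, and pulling back through the isomorphism $\phi_g$ shows that $\supp\bms_x^{N^+}$ is Zariski dense in $N^+$.

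Next I bring in the product structure. By Lemma \ref{lem.str}, $\bms_x^{N^+}\propto\iota(\bms_x^{U_{[\alpha_1]}}\times\cdots\times\bms_x^{U_{[\alpha_\ell]}})$; as the support of a product measure is the product of the supports and $\iota$ is a homeomorphism, we get $\supp\bms_x^{N^+}=\iota\bigl(\prod_i\supp\bms_x^{U_{[\alpha_i]}}\bigr)\subseteq\iota\bigl(\prod_i\tilde U_{[\alpha_i]}\bigr)$, using $\supp\bms_x^{U_{[\alpha_i]}}\subseteq\tilde U_{[\alpha_i]}$ from the definition of $\tilde U_{[\alpha_i]}$. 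By Lemma \ref{lem.hololo} the product map $\iota$ is an isomorphism of algebraic varieties with polynomial inverse, so $\iota(\prod_i\tilde U_{[\alpha_i]})$ is Zariski closed. Combined with the density established above, $N^+=\Zcl(\supp\bms_x^{N^+})\subseteq\iota(\prod_i\tilde U_{[\alpha_i]})$, and since $\iota$ is a bijection this forces $\tilde U_{[\alpha_i]}=U_{[\alpha_i]}$ for every $i$. Finally, were some $\bms_x^{U_{\pm[\alpha]}}$ trivial, its support would be $\{e\}$ and $\tilde U_{\pm[\alpha]}$ would be the trivial group, contradicting $\tilde U_{\pm[\alpha]}=U_{\pm[\alpha]}$; hence all these leafwise measures are nontrivial.

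I expect the main obstacle to be the first step: transferring the Zariski density of $\La_\Ga$ from the flag variety $\cal F$ to the group $N^+$ through the big-cell coordinates, and then checking that this global density is compatible with the factorization into coarse Lyapunov pieces. It is precisely here that the polynomial character of $\iota$ supplied by Lemma \ref{lem.hololo}—which guarantees that $\prod_i\tilde U_{[\alpha_i]}$ maps to a Zariski closed subset of $N^+$—is indispensable.
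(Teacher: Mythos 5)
Your proof is correct and follows essentially the same route as the paper's: it combines the explicit formula \eqref{eq.2} with the Zariski density of $\La_\Ga$ (via the unique minimality of $\La_\Ga$ inside $\supp\nu_\psi$), the product structure of Lemma \ref{lem.str}, and the polynomial invertibility of $\iota$ from Lemma \ref{lem.hololo} to conclude that $\iota(\prod_i\tilde U_{[\alpha_i]})$ is Zariski closed. The only difference is presentational — you argue directly that $\supp\bms_x^{N^+}$ is Zariski dense in $N^+$, whereas the paper runs the same ingredients as a proof by contradiction assuming some $\tilde U_{[\alpha_i]}$ is proper.
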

\begin{proof}
    By contradiction, suppose $\tilde U_{[\alpha]}$ is a proper subgroup of $U_{[\alpha]}$.
    Without loss of generality, $U_{[\alpha]}\subset N^+$.
    Labelling the coarse Lyapunov weights as in Lemma \ref{lem.str}, we have $[\al]=[\al_i]$ for some $i$ and
    \begin{equation}\label{eq.CS2}
    \bms_x^{N^+}\propto \iota (\bms_x^{U_{[\alpha_1]}}\times\cdots\times \bms_x^{U_{[\alpha_\ell]}}).
    \end{equation}
    for $\bms$-a.e $x$.
    Fix such $x\in\Ga\ba G$ and let $g\in G$ be such that $x=[g]$.
    Since $\bms_x^{U_{[\alpha]}}$ is supported on $\tilde U_{[\alpha]}$, by \eqref{eq.CS2}, the topological support of $\bms_x^{N^+}$ is contained in 
    $$
    X:=\iota( U_{[\alpha_1]}\times\cdots\times \tilde U_{[\alpha_i]}\times\cdots\times U_{[\alpha_\ell]}).
    $$
    Since $N^+$ and $U_{[\al]}$'s are nilpotent groups, they are isomorphic to their Lie algebras as algebraic varieties, via the exponential map.  
    By Lemma \ref{lem.hololo}, it follows that $X$ is a proper Zariski closed subset of $N^+$.
    It follows from \eqref{eq.2} that $g^{-1}\La_\Ga$ is contained in the image of $X\subset N^+$,
    under the open embedding $N^+\to G/P^-$. 
    This contradicts the Zariski density of $\La_\Ga$ \cite[Lem. 3.6]{Ben}.
\end{proof} 
 We also record the following two standard lemmas:
\begin{lem}\label{lem.rootsys}
Let $\Phi$ be a simple root system (i.e. root system of a complex simple Lie group) in a Euclidean space $V$. Suppose $W\subset V$ is a subspace of codimension at least $2$. Then, 
$$\Phi\setminus W\subset \{ \alpha+\beta \in \Phi \mid \alpha,\beta\in \Phi\setminus W, 
(\bb R\al+\bb R\beta)\cap W=\{0\}
\}.$$
\end{lem}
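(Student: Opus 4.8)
The plan is to prove the inclusion pointwise: fix a root $\gamma\in\Phi\setminus W$ and produce a decomposition $\gamma=\alpha+\beta$ of the required type by taking $\alpha=\delta$, $\beta=\gamma-\delta$ for a suitable root $\delta$ (we may assume $\operatorname{rank}\Phi\ge 2$, as otherwise no $W$ of codimension $\ge 2$ exists). Set $W'=W+\mathbb{R}\gamma$. Since $\gamma\notin W$ and $\codim W\ge 2$, the space $W'$ is a \emph{proper} subspace of $V$; this is the only place the hypothesis $\codim W\ge 2$ is used. I first record two reductions. If a root $\delta$ satisfies $(\gamma,\delta)>0$ and $\delta\neq\gamma$, then $\gamma-\delta\in\Phi$ by the standard fact that a positive inner product between distinct roots yields their difference as a root; moreover $\delta$ and $\gamma$ are then linearly independent (as $\Phi$ is reduced, proportional roots are $\pm$-multiples, and $\delta=-\gamma$ is excluded by $(\gamma,\delta)>0$), so $\mathbb{R}\alpha+\mathbb{R}\beta=\mathbb{R}\delta+\mathbb{R}\gamma$ is a plane. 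If in addition $\delta\notin W'$, then $(\mathbb{R}\delta+\mathbb{R}\gamma)\cap W=\{0\}$ (any nonzero $a\delta+b\gamma\in W$ would force $\delta\in W'$, since $\gamma\notin W$), which also gives $\alpha=\delta\notin W$ and $\beta=\gamma-\delta\notin W$; and $\gamma\in W'$ makes $\delta\neq\gamma$ automatic once $\delta\notin W'$. Thus it suffices to find a root $\delta$ with $(\gamma,\delta)>0$ and $\delta\notin W'$.

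Because $W'\subsetneq V$, such a $\delta$ exists as soon as the set $R^{+}=\{\delta\in\Phi:(\gamma,\delta)>0\}$ fails to lie inside $W'$, and for this it is enough to show that $R^{+}$ \emph{spans} $V$. As $R^{+}$ and $-R^{+}$ together exhaust the set $R_1=\{\delta\in\Phi:(\gamma,\delta)\neq 0\}$ of roots not orthogonal to $\gamma$, the whole argument reduces to the spanning claim
\[
V_1:=\Span R_1=V.
\]
I expect this to be the main obstacle: $\gamma$ can be orthogonal to a large sub-root-system, and it is precisely the irreducibility of $\Phi$ that forces the roots non-orthogonal to $\gamma$ to span all of $V$.

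To prove the claim, suppose $V_1\neq V$ and put $U=V_1^{\perp}\neq\{0\}$. The crux is to rule out \emph{mixed} roots, i.e.\ roots with nonzero component in both $V_1$ and $U$. If $\sigma$ were such a root, then $(\gamma,\sigma)=0$ (otherwise $\sigma\in R_1\subseteq V_1$); writing $\sigma=\sigma_1+\sigma_2$ with $\sigma_1\in V_1\setminus\{0\}$ and $\sigma_2\in U$, and expanding $\sigma_1=\sum_i c_i\delta_i$ with $\delta_i\in R_1$, the identity $\sum_i c_i(\sigma,\delta_i)=(\sigma,\sigma_1)=\lvert\sigma_1\rvert^{2}>0$ forces some $\delta\in R_1$ with $(\sigma,\delta)\neq 0$. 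The appropriate one of $\tau=\sigma\pm\delta$ is then a root (note $\sigma\neq\pm\delta$ since $\sigma$ is mixed and $\delta\in V_1$), and $(\gamma,\tau)=\pm(\gamma,\delta)\neq 0$, so $\tau\in R_1\subseteq V_1$; but then $\sigma=\tau\mp\delta\in V_1$, contradicting that $\sigma$ is mixed. Hence every root lies in $V_1$ or in $U$, so $\Phi=(\Phi\cap V_1)\sqcup(\Phi\cap U)$ is an orthogonal union of two subsystems with $\gamma\in\Phi\cap V_1$ and $\Phi\cap U\neq\emptyset$ (otherwise $\Phi\subseteq V_1$, contradicting $\Span\Phi=V\neq V_1$). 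This contradicts the irreducibility of $\Phi$, so $V_1=V$, which completes the proof.
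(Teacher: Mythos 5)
Your proof is correct, and its skeleton coincides with the paper's: both arguments reduce the lemma to finding a root $\delta$ with $(\gamma,\delta)\neq 0$ lying outside the proper subspace $W+\bb R\gamma$ (this is exactly where $\codim W\geq 2$ is used), and both then observe that the resulting plane $\bb R\gamma+\bb R\delta$ meets $W$ trivially. The two write-ups differ at the ends. For the existence of such a $\delta$, the paper cites \cite[Lemma 3.7]{FG} for the fact that the roots non-orthogonal to $\gamma$ span $V$; you instead prove this spanning statement from scratch via the ``no mixed roots'' argument and irreducibility, which is a correct, self-contained substitute. For the decomposition itself, the paper passes to the rank-two system $\Phi\cap(\bb R\gamma+\bb R\eta)$, classifies it, and checks case by case that $\gamma$ is a sum of two of its roots; you normalize $(\gamma,\delta)>0$ and invoke the root-string fact that two nonproportional roots with positive inner product have their difference in $\Phi$, yielding $\gamma=\delta+(\gamma-\delta)$ with no case analysis. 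Your endgame is the more economical of the two and avoids the classification entirely; the paper's buys a slightly more explicit picture of the rank-two subsystem, which it does not actually need. The small verifications in your argument (nonproportionality of $\delta$ and $\gamma$ in a reduced system, the implication $\delta\notin W+\bb R\gamma\Rightarrow(\bb R\delta+\bb R\gamma)\cap W=\{0\}$, and the vacuousness of the statement in rank one) are all in order.
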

\begin{proof}
Let $\kappa\in \Phi\setminus W.$ By \cite[Lemma 3.7]{FG}, the set $\{\eta\in \Phi| \langle\eta, \kappa\rangle\neq 0\}$ spans $V$, so we can find an $\eta$ such that $\langle \eta,\kappa\rangle\neq 0$ and $\kappa+W, \eta+W$ are linearly independent in $V/W.$ The subspace $E:=\mathbb R \kappa+\mathbb R\eta$ intersects $W$ transversally: $E\cap W=\{0\}.$ Consider the intersection $\Phi\cap E$. It is a rank $2$ root system containing a pair of non-orthogonal linearly independent roots $\kappa,\eta$. It follows that $\Phi\cap E$ is of type $A_2, C_2, D_2$ or $G_2$. In each of these root systems one can check by hand that $\kappa=\pm\alpha+\pm\beta$ for some $\alpha,\beta\in \Phi\cap E$. Condition $E\cap W=\{0\}$ implies that 
$(\bb R\al+\bb R\beta)\cap W=\{0\}$.
\end{proof}
\begin{lem}\label{lem.pair}
    Assume that $G$ is a simple Lie group of 
    rank $\geq 2$. Then $G$ is generated by the subgroups $\{[U_{[\alpha]},U_{[\beta]}]: \alpha,\beta\in \Sigma, [\al]\neq[\beta]\neq-[\al]\}$ and $AM$.
\end{lem}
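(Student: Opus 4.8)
The plan is to reduce the statement to a Lie-algebra generation fact and read it off from the restricted root space decomposition $\frak g=\frak g_0\oplus\bigoplus_{\gamma\in\Sigma}\frak g_\gamma$, where $\frak g_0=\frak m\oplus\frak a$ is the centralizer of $\frak a$. Let $H<G$ be the subgroup generated by $AM$ together with all $[U_{[\alpha]},U_{[\beta]}]$ with $[\alpha]\neq\pm[\beta]$. Since $G$ is connected, it suffices to prove that the Lie algebra $\frak h$ of the identity component of $H$ is all of $\frak g$. By construction $\frak h\supseteq\frak g_0$, and since $[\exp(tX),\exp(tY)]=\exp(t^2[X,Y]+O(t^3))$ we get $[\frak g_\alpha,\frak g_\beta]\subseteq\frak h$ for all $X\in\frak g_\alpha$, $Y\in\frak g_\beta$ whenever $\alpha,\beta$ are non-proportional roots. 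The excluded proportional pairs are exactly those whose brackets would fall into $\frak g_0$ (the case $\beta=-\alpha$) or into the same coarse direction $\frak g_{2\alpha}$ (the case $\beta=\alpha$); the former is the reason $AM$, which alone supplies $\frak m$, has to be included among the generators.

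Because $\frak a\subseteq\frak h$, the subspace $\frak h$ is $\ad(\frak a)$-stable and therefore splits along the root spaces, $\frak h=\frak g_0\oplus\bigoplus_\gamma(\frak h\cap\frak g_\gamma)$. Hence $\frak h=\frak g$ will follow as soon as I show $\frak g_\gamma\subseteq\frak h$ for every $\gamma\in\Sigma$. This is where the rank hypothesis is used, through Lemma \ref{lem.rootsys}: applied with the codimension-$\geq2$ subspace $W=\{0\}$ (admissible precisely because $\rank\Sigma\geq2$), it shows that every root $\gamma$ can be written as $\gamma=\alpha+\beta$ with $\alpha,\beta$ linearly independent roots, so that $[\frak g_\alpha,\frak g_\beta]\subseteq\frak h\cap\frak g_\gamma$. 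The lemma is stated for reduced systems, but in the non-reduced case $BC_n$ the same decompositions are available by hand, e.g. $2e_i=(e_i+e_j)+(e_i-e_j)$ and $e_i=(e_i+e_j)+(-e_j)$.

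The technical heart, and the step I expect to be the main obstacle, is to upgrade the inclusion $[\frak g_\alpha,\frak g_\beta]\subseteq\frak g_\gamma$ to an equality, i.e. surjectivity of the bracket onto the full, possibly higher-dimensional, restricted root space. I would obtain this from the representation theory of the rank-one subalgebra attached to $\alpha$ acting on the $\alpha$-root string through $\beta$: if the decomposition is chosen so that this string has length two (neither $\beta-\alpha$ nor $\beta+2\alpha$ lies in $\Sigma$), then $\beta$ and $\alpha+\beta$ are the bottom and top of the string, the raising map $\ad(\frak g_\alpha)\colon\frak g_\beta\to\frak g_{\alpha+\beta}$ is forced to be onto, and $[\frak g_\alpha,\frak g_\beta]=\frak g_\gamma$. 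That a length-two decomposition exists for every root of an irreducible system of rank $\geq2$ is a finite root-system check, and the root multiplicities together with the non-reduced roots $2\alpha$ require only routine extra bookkeeping. Granting this, $\frak g_\gamma\subseteq\frak h$ for all $\gamma$, so $\frak h=\frak g$ and therefore $H=G$.
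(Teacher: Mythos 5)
Your route is genuinely different from the paper's: you work directly with the restricted root decomposition $\frak g=\frak g_0\oplus\bigoplus_{\gamma\in\Sigma}\frak g_\gamma$, whereas the paper complexifies first. In the complexification every root space of the Cartan subalgebra $\frak a+i\frak b$ is one-dimensional, so $[\frak g^{\bb C}_{\alpha'},\frak g^{\bb C}_{\beta'}]=\frak g^{\bb C}_{\alpha'+\beta'}$ is automatic whenever $\alpha'+\beta'$ is a root, and Lemma \ref{lem.rootsys} is applied not with $W=\{0\}$ but with $W=\{\xi:\xi(\frak a)=0\}$ (which has codimension $\geq 2$ precisely by the rank hypothesis) so as to produce $\alpha',\beta'$ whose restrictions to $\frak a$ are non-proportional; \cite[Lemma 3.7]{FG} then finishes the generation argument. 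Your approach correctly isolates the real difficulty of working downstairs --- surjectivity of $\ad\colon\frak g_\alpha\otimes\frak g_\beta\to\frak g_{\alpha+\beta}$ when restricted root spaces have multiplicity --- but the step you propose for it fails as stated.

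The claim that every root of an irreducible system of rank $\geq 2$ admits a decomposition $\gamma=\alpha+\beta$ into non-proportional roots with the $\alpha$-string through $\beta$ of length two is false. For the long roots of $C_n$ and for the roots $2e_i$ of $BC_n$, the only decompositions into non-proportional roots are $2e_1=(e_1+e_j)+(e_1-e_j)$, and the $(e_1+e_j)$-string through $e_1-e_j$ is $\{-2e_j,\;e_1-e_j,\;2e_1\}$, of length three; this is exactly the decomposition you offer for $BC_n$, so your surjectivity argument does not apply to it. The gap is repairable within your framework: a nonzero $u\in\frak g_{-\alpha-\beta}$ annihilated by $\ad\frak g_\alpha$ would be a weight vector killed by the raising operator of an $\frak{sl}_2$-triple through $\frak g_\alpha$ with $H_\alpha$-weight $-(2+\langle\beta,\alpha^\vee\rangle)$, which is impossible once $\langle\beta,\alpha^\vee\rangle\geq -1$ (equivalently $q\leq p+1$ for the string $\beta-p\alpha,\dots,\beta+q\alpha$); by nondegeneracy of the Killing form pairing of $\frak g_{\alpha+\beta}$ with $\frak g_{-\alpha-\beta}$ this yields $[\frak g_\alpha,\frak g_\beta]=\frak g_{\alpha+\beta}$, and this weaker condition does hold for the $C_n$/$BC_n$ decompositions above (there $p=q=1$). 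So either adopt this corrected criterion and redo the case check, or follow the paper and complexify, which removes the multiplicity issue entirely.
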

\begin{proof}
    Let $\frak g'$ be the sub-algebra generated by $\{[\frak g_{[\alpha]}, \frak g_{[\beta]}] : \alpha,\beta\in \Sigma,\be\neq-\al\}$, $\frak a$, and $\frak m$, where $\frak m$ denotes the Lie algebra of $M$.
    It suffices to verify that $\frak g'=\frak g$, or equivalently their complexifications satisfy $(\frak g')^{\bb C}=\frak g^{\bb C}$.
    
    Let $\frak h =\frak a+ i\frak b\subset \frak g^{\bb C}$ be a Cartan sub-algebra containing $\frak a$. Let $\Phi$ be the root system of $\frak g^{\bb C}$ relative to $\frak h$. To distinguish them from the roots in $\Sigma$, we will use $\alpha', \beta'$ e.t.c for roots in $\Phi$ and $\alpha, \beta$ e.t.c for roots in $\Sigma$. 
    For $\alpha'\in\Phi$, write
    $$
    \frak g^{\bb C}_{\alpha'}=\{X\in\frak g^{\bb C}: \op{ad}_Y(X)=\alpha'(Y)X\text{ for all }Y\in\frak h\}
    $$
    Let $V:=\Hom(\frak h, \mathbb R)$  and let $W:=\{\xi\in V\mid \xi(\frak a)=0\}.$ Note that the higher rank assumption means that $W$ is of codimension at least $2$. By Lemma \ref{lem.rootsys}, any root $\kappa'\in \Phi$ which restricts to a non-trivial root of $\frak a$ (i.e. $\kappa'\not\in W$) can be written as $\kappa'=\alpha'+\beta'$, with $(\bb R\al'+\bb R\beta')\cap W=\{0\}$.
    Writing $\alpha, \beta$ for the restrictions of $\alpha',\beta'$ to $\frak a$, this in particular implies that $[\al]\neq[\beta]\neq-[\al]$.
    We have 
    $$\frak g^{\bb C}_{\kappa'}=[\frak g^{\bb C}_{\alpha'},\frak g^{\bb C}_{\beta'}]\subset [\frak g_{[\alpha]}, \frak g_{[\beta]}]^{\mathbb C}\subset (\frak g')^{\bb C}.$$
    By \cite[Lemma 3.7]{FG}, $\frak g^{\mathbb C}$ is generated by the root sub-spaces $\frak g^{\bb C}_{\kappa'}, \kappa'\in \Phi\setminus W$ so we can deduce that $(\frak g')^{\bb C}=\frak g^{\bb C}$.
\end{proof}

\begin{prop}\label{lem.rk2}
Assume that $\bms(\Ga\ba G)<\infty$.
Then for each simple factor $G_i$ of 
rank $\geq2$, $\bms$ is $G_i$-invariant.
\end{prop}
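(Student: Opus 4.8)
The plan is to deduce $G_i$-invariance by combining the commutator-invariance of Lemma \ref{lem.comm} with the generation statement of Lemma \ref{lem.pair}, using the nontriviality of all leafwise measures from Lemma \ref{lem.tr} as the crucial input.

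First I would record the ingredients. Since $\bms$ is finite it is $AM$-ergodic by Lemma \ref{lem.erg}; in particular it is $A$-invariant and $M$-invariant. As $G=\prod_j G_j$ we have $\frak a=\bigoplus_j \frak a_j$, and each root space lies in a single factor, so the coarse Lyapunov weights split according to the factors; those $[\alpha]$ with $\frak g_{[\alpha]}\subset\frak g_i$ are precisely the restricted roots of $G_i$. By Lemma \ref{lem.tr}, $\tilde U_{[\alpha]}=U_{[\alpha]}$ for every such weight.

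The core of the argument is then short. For any two coarse Lyapunov weights $[\alpha]\neq[\beta]\neq-[\alpha]$ attached to $G_i$, Lemma \ref{lem.comm} shows $\bms$ is invariant under the group generated by $[\tilde U_{[\alpha]},\tilde U_{[\beta]}]=[U_{[\alpha]},U_{[\beta]}]$. Letting the pair range over all admissible weights of $G_i$, $\bms$ is invariant under the subgroup $H_i\leq G_i$ generated by all these commutators. By Lemma \ref{lem.pair}, applied to the simple rank $\geq 2$ factor $G_i$, the group $G_i$ is generated by $H_i$ together with $A_iM_i$, where $A_i=A\cap G_i$ and $M_i=M\cap G_i$. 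Since $\bms$ is $AM$-invariant it is $A_iM_i$-invariant, and hence invariant under all of $G_i=\langle H_i,A_iM_i\rangle$, which is the claim; equivalently, one could phrase the last step through Lemma \ref{lem.inv}.

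The main obstacle is that Lemma \ref{lem.comm} requires $\bms$ to be $A$-ergodic, whereas Lemma \ref{lem.erg} only provides $AM$-ergodicity. I would bridge this by passing to the $A$-ergodic decomposition $\bms=\int \bms_y\,d\lambda(y)$. Because $M$ centralizes $A$ it normalizes each $U_{[\alpha]}$ and permutes the $A$-ergodic components, and by $AM$-ergodicity it does so transitively modulo null sets; thus each $\bms_y$ is a finite $A$-invariant ergodic measure. Leafwise measures are compatible with the ergodic decomposition, so $(\bms_y)_x^{N^+}\propto\bms_x^{N^+}$ for a.e. $x$ and $\lambda$-a.e. $y$, whence the relation \eqref{eq.2} together with the Zariski-density argument of Lemma \ref{lem.tr} still forces $\tilde U_{[\alpha]}(\bms_y)=U_{[\alpha]}$ for $\lambda$-a.e. $y$. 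Applying Lemma \ref{lem.comm} to each $\bms_y$ gives $H_i$-invariance of every component, and integrating recovers $H_i$-invariance of $\bms$; the $M$-invariance that individual components may lack is then supplied by the full measure $\bms$ itself, completing the reduction. The point requiring the most care is precisely this persistence of nontriviality of the leafwise measures under passage to almost every ergodic component.
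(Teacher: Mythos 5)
Your proof is essentially identical to the paper's: both combine the nontriviality of all leafwise measures (Lemma \ref{lem.tr}) with the commutator invariance of Lemma \ref{lem.comm} and the generation statement of Lemma \ref{lem.pair}, then invoke $AM$-invariance of $\bms$ to conclude. Your additional care about the $A$-ergodicity hypothesis of Lemma \ref{lem.comm}, handled via the $A$-ergodic decomposition and the persistence of nontrivial leafwise measures on almost every component, addresses a point the paper's two-line proof passes over silently, but it does not change the approach.
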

\begin{proof}
    By Lemmas \ref{lem.comm} and \ref{lem.tr}, the measure $\bms$ is invariant under the subgroups $\{[U_{[\alpha]},U_{[\beta]}]: \alpha,\beta\in \Sigma, \beta\neq-\al\}$. Since $G_i$ is simple of 
    rank $\geq 2$, Lemma \ref{lem.pair} gives the conclusion.
\end{proof}
At this point we already completed the proof for the simple higher rank case.
To deal with the products of rank one groups, we will once again take advantage of the product structure of the leafwise measures (Lemma \ref{lem.pro}) in the proof of the following:

\begin{prop}\label{lem.d}
    Assume that $\bms(\Ga\ba G)<\infty$ and $\pi_i(\Ga)$ is dense for each $i$.
    Then $\bms$ is Haar.
\end{prop}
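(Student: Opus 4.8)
The plan is to show that the two $\Ga$-conformal measures $\nu_\psi$ and $\nu_{\psi\circ\i}$ underlying $\bms$ are in fact conformal under the \emph{whole} group $G$; once this is known, the explicit formula \eqref{eq.BMS0} makes $\tilde\bms$ left $G$-invariant, so that $\bms$ is (a constant multiple of) the Haar measure. I will carry out the argument for $\nu_\psi$ and $N^+$; the case of $\nu_{\psi\circ\i}$ and $N^-$ is identical after applying the opposition involution.

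First I would establish that $\nu_\psi$ is a product measure. Since $\frak a=\bigoplus_i\frak a_i$ and $\Sigma$ is the disjoint union of the root systems $\Sigma_i$ of the factors, every coarse Lyapunov subgroup $U_{[\alpha]}$ lies in a single $N_i^+$, so $N^+=\prod_i N_i^+$ and Lemma \ref{lem.str} regroups into $\bms_x^{N^+}\propto\iota(\bms_x^{N_1^+}\times\cdots\times\bms_x^{N_q^+})$ for $\bms$-a.e. $x$. Feeding this into the density formula \eqref{eq.2} and using $\sigma=\sum_i\sigma_i$ together with $\psi(\sigma(\cdot,P^-))=\sum_i\psi|_{\frak a_i}(\sigma_i(\cdot,P_i^-))$, I get that on the big cell $N^+P^-/P^-$ the measure $\nu_\psi$ factors as a product of measures, one on each $\cal F_i$. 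Since $\La_\Ga=\prod_i\La_{\Ga_i}$ (Lemma \ref{lem.prod}) and, by Zariski density of $\Ga$, the conformal measure $\nu_\psi$ gives no mass to the complement of the big cell (a proper Zariski closed subset), I conclude $\nu_\psi=\bigotimes_i\nu_i$, where $\nu_i:=p_{i*}\nu_\psi$ is supported on $\La_{\Ga_i}$.

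Next I would show each $\nu_i$ is \emph{almost conformal} under $\Ga_i=\pi_i(\Ga)$. Writing $\gamma=(\gamma_1,\ldots,\gamma_q)\in\Ga$ and using the factorization $e^{-\psi(\sigma(\gamma^{-1},\cdot))}=\prod_i e^{-\psi|_{\frak a_i}(\sigma_i(\gamma_i^{-1},\xi_i))}$ of the cocycle \eqref{eq.PS}, the identity $\prod_i\frac{d((\gamma_i)_*\nu_i)}{d\nu_i}(\xi_i)=\prod_i e^{-\psi|_{\frak a_i}(\sigma_i(\gamma_i^{-1},\xi_i))}$ holds $\nu_\psi$-a.e. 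A product of single-variable functions can agree with another such product only up to constants (fix all but one coordinate), so there are $c_i(\gamma)>0$ with $\prod_i c_i(\gamma)=1$ and
$$(\gamma_i)_*\nu_i=c_i(\gamma)\,e^{-\psi|_{\frak a_i}(\sigma_i(\gamma_i^{-1},\cdot))}\,\nu_i.$$
As the conformal density depends only on $\gamma_i$, the scalar $c_i(\gamma)$ depends only on $\gamma_i$, and the cocycle identity for $\sigma_i$ shows $c_i:\Ga_i\to\R_{>0}$ is a homomorphism.

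The crucial step, which I expect to be the main obstacle, is upgrading this almost-conformality to genuine $G_i$-conformality via density of $\Ga_i$. Both sides of the displayed relation are weak-$*$ continuous in $\gamma_i$ (the cocycle $\sigma_i$ is continuous and the measures are finite), so density of $\Ga_i$ in $G_i$ lets the relation extend to all $g\in G_i$, with $c_i$ extending to a continuous homomorphism $G_i\to\R_{>0}$. Since $G_i$ is simple, hence perfect, it has no nontrivial continuous homomorphism into the abelian group $\R_{>0}$, forcing $c_i\equiv1$; thus $\nu_i$ is genuinely $(G_i,\psi|_{\frak a_i})$-conformal. Consequently $g_*\nu_\psi=\bigotimes_i(g_i)_*\nu_i=e^{-\psi(\sigma(g^{-1},\cdot))}\nu_\psi$ for every $g=(g_i)\in G$, i.e.\ $\nu_\psi$ is $(G,\psi)$-conformal; the same argument applied to $N^-$ shows $\nu_{\psi\circ\i}$ is $(G,\psi\circ\i)$-conformal. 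Plugging both into \eqref{eq.BMS0} yields a left $G$-invariant measure, so $\bms$ is Haar. The delicate points to handle carefully are the vanishing of $\nu_\psi$ off the big cell, the coordinatewise extraction of the constants $c_i(\gamma)$, and the weak-$*$ continuity underlying the density argument.
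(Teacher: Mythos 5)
Your proposal is correct and follows essentially the same route as the paper's proof: both use the product structure of the leafwise measures along $N^+$ to factor $\nu_\psi$ as $\bigotimes_i\nu_i$, extract the multiplicative characters $c_i$ measuring the failure of $\Ga_i$-conformality, kill them using density of the projections together with perfectness of the simple factors, and conclude that each $\nu_i$ is $G_i$-conformal, hence rotation-invariant. The only (immaterial) differences are that the paper kills the characters by restricting to $[\Ga,\Ga]$, whose projections are still dense, rather than by extending $c_i$ to a continuous character of $G_i$, and it finishes by noting that the leafwise measures along $N_i^\pm$ are Haar and invoking Lemma \ref{lem.inv} rather than reading off $G$-invariance directly from \eqref{eq.BMS0}.
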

\begin{proof}
    Let $\psi$ and $\psi\circ\i$ be the linear forms on $\frak a$ implicitly given in \eqref{eq.sfm}. 
    Identifying $\frak a_i:=\frak a\cap\frak g_i$ with its embedded image in $\frak g$, $\psi$ can be written uniquely as a sum of linear forms $\psi_i$ on $\frak a_i$.
    For any $g\in G$, let $g_i\in G_i$ be its projection to $G_i$.
    Denoting by $\sigma_i : G_i\times \cal F_i\to\frak a_i$ the Iwasawa cocycles for $G_i$ as in \eqref{eq.Iwa}, we have
    $$
    \textstyle\sigma(g,\xi)=\sum_{i=1}^q\sigma_i(g_i,\xi_i)
    $$
    for all $\xi=(\xi_1,\cdots,\xi_q)\in\cal F$.
    Let $\iota : N_1^+\times\cdots N_q^+\to N^+$ denote the product map.
    Since $\bms$ is $A$-invariant, by a repeated application of Lemma \ref{lem.pro},
    \begin{equation}\label{eq.CS4}
    \bms_x^{N^+}\propto \iota (\bms_x^{N_1^+}\times \cdots\times \bms_x^{N_q^+})    
    \end{equation}
    for $\bms$-a.e. $x$.
    Fix such $x\in\Ga\ba G$ and let $g=(g_1,\cdots,g_q)\in G$ be such that $x=[g]$. 
    We then define measures $\nu_i$ on $\cal F_i$ $(1\leq i\leq q)$ by
    \begin{equation}\label{eq.nui}
    d\nu_{i}(g_in_iP_i^-):=e^{\psi_i(\sigma_i(g_in_i,P_i^-))}\,d\bms_x^{N_i^+}(n_i).    
    \end{equation}
    By choosing the normalization of $\bms_x^{N^+}$ as in \eqref{eq.2}, it follows from \eqref{eq.CS4} and \eqref{eq.nui} that
    $$
    \nu_\psi\propto\nu_1\times\cdots\times \nu_q.
    $$
    In particular, $\nu_i$ are finite measures on $\cal F_i$ and for all $\ga\in\Ga$,
    $$
    \ga_1.\nu_1\times \cdots\times\ga_q.\nu_q\propto \ga.\nu_\psi=e^{-\psi_1(\sigma_1(\ga_1^{-1},\cdot))}\nu_1\times\cdots\times e^{-\psi_q(\sigma_q(\ga_q^{-1},\cdot))}\nu_q
    $$
    by \eqref{eq.PS}.
    It follows that there exists a multiplicative character $\chi_i : \Ga\to\bb R^\times$ $(1\leq i\leq q)$ such that for all $\ga\in\Ga$ and $\xi_i\in\cal F_i$,
    $$
    \frac{d(\ga_i.\nu_i)}{d\nu_i}(\xi_i)= \chi_i(\ga)e^{-\psi_i(\sigma_i(\ga_i^{-1},\xi_i))}.
    $$
    Set $\Ga_0:=[\Ga,\Ga]$ and note that each $\chi_i$ is trivial on $\pi_i(\Ga_0)$.
    In particular, $\nu_i$ is $(\pi_i(\Ga_0),\psi_i)$-conformal.
    Since $\pi_i(\Ga_0)$ is dense in $G_i$, it follows that $\nu_i$ is $(G_i,\psi_i)$-conformal.
    This implies that $\nu_i$ is the unique $G_i\cap K$-invariant measure on $\cal F_i$ up to normalization, and $\bms_{x}^{N_i^+}$ is Haar $\bms$-a.e.
    The same argument shows that $\bms_{x}^{N_i^-}$ is Haar $\bms$-a.e.
    Since $N_i^\pm$ and $AM$ generate $G$, by Lemma \ref{lem.inv}, it follows that $\bms$ is Haar.
\end{proof}


We are now ready to give:
\subsection*{Proof of Theorem \ref{thm.BMS}}
The validity of the theorem depends only on the commensurability class of $\Ga$.
Hence, by passing to a covering and a quotient of $G$, we may assume without loss of generality that $G$ is a direct product of its 
simple factors, none of which are compact.
This allows us to freely use the lemmas established in this section.

Let $\pi_i : G\to G_i$ $(i=1,\cdots,q)$ denote the projection of $G$ to each factor.
Since $\pi_i(\Ga)$ is Zariski dense in $G_i$, it is either discrete or dense by Lemma \ref{lem.DD}.
Let $I$ be the set of indices for which 
$\pi_i(\Ga)$ is dense, and $J$ be the complement of $I$.
Let $G_I$ (resp. $G_J$) be the product of $G_i$'s with $i\in I$ (resp. $i\in J$).
Set
$$
\Ga_J:=\prod_{j\in J} \pi_j(\Ga)< G_J,
$$
and $\Ga_I:=\pi_I(\Ga)$ where $\pi_I : G\to G_I$ is the projection.
Applying Lemma \ref{lem.disc} to the product $G=G_I\cdot G_J$, it follows that $\Ga_I$ is discrete, $\Ga$ is commensurable with $\Ga_I\cdot\Ga_J$, and $\bms$ is a finite extension of product of finite $\BMS$ measures on $\Ga_I\ba G_I$ and $\Ga_J\ba G_J$, say $\bms_I$ and $\bms_J$.
Applying Lemma \ref{lem.disc} repeatedly to each $\pi_j(\Ga)$ $(j\in J)$ shows that $\bms_J$ is a product of finite $\BMS$ measures on $\pi_j(\Ga)\ba \tilde\Om_{\pi_j(\Ga)}$.

By Proposition \ref{lem.d}, $\bms_{I}$ is Haar and $\Ga_I$ is a lattice in $G_I$.
By Proposition \ref{lem.rk2}, $\Ga_j$ is a lattice in $G_j$ for all $j\in J$ such that $G_j$ is of rank $\geq 2$, and the remaining indices of $J$ correspond to discrete subgroups of rank one groups with finite $\BMS$ measures.
This complete the proof.
$\qed$

\end{document}